\documentclass[11pt]{article}

\usepackage[margin=1.1in]{geometry}
\usepackage{amsmath,amssymb,amsthm}
\usepackage{url}

\newtheorem{theorem}{Theorem}
\newtheorem{proposition}{Proposition}
\newtheorem{corollary}{Corollary}
\newtheorem{lemma}{Lemma}
\newtheorem{remark}{Remark}

\DeclareMathOperator{\AO}{AO}
\DeclareMathOperator{\Log}{Log}

\title{Saddle-Point Asymptotics for Chromatic and Tutte Polynomial
Evaluations of Complete Multipartite Graphs}
\author{\begin{tabular}{c}
Zhiyang Sun\textsuperscript{a,b,*}\\[0.5ex]
\footnotesize \textsuperscript{a}College of Physics and Optoelectronic
Engineering, Shenzhen University,\\
\footnotesize Shenzhen 518060, China\\
\footnotesize \textsuperscript{b}Shenzhen Institute for Technology Innovation,\\
\footnotesize National Institute of Metrology, Shenzhen 518107, China\\
\footnotesize \textsuperscript{*}Corresponding author:
\texttt{doctorszy@foxmail.com}
\end{tabular}}
\date{May 2, 2026}

\begin{document}
\maketitle

\begin{abstract}
We develop a saddle-point theory for acyclic orientations and negative
chromatic evaluations of complete multipartite graphs, with applications to
OEIS A267383, A372326, A372084, A372395, and A370613.  The main tool is the
exact integral
\[
  \AO(K_{\lambda_1,\ldots,\lambda_r})
  =
  \int_0^\infty e^{-t}\prod_{i=1}^r P_{\lambda_i}(t)\,dt,
  \qquad
  P_m(t)=\sum_{j=1}^m (-1)^{m+j}S(m,j)t^j,
\]
and its Gamma-weighted extension
\(H_s(G)=(-1)^N\chi_G(-s)\), equal to \(sT_G(1+s,0)\) for connected \(G\).
We prove Kotesovec's fixed-column conjecture for A267383 for arbitrary fixed
numbers of parts, give the corresponding fixed-\(p\) Tutte-axis asymptotics,
develop an ACSV framework for chromatic evaluations of fixed graph blow-ups,
and give unconditional fixed-base families reducible to balanced Turán graphs.  In
the product regimes we prove fixed part-size and finite-profile expansions, and
for equal-size parts we obtain an all-order expansion throughout every fixed
polynomial window \(1\leq m\leq Cn^J\), including explicit corrections through
the \(m\asymp n^3\) scale.  Finally, we prove logarithmic asymptotics for the
partition-sum sequences A372395 and A370613 via a quadratic-energy partition
model, a growing-window comparison for the Stirling-transform factors, and a
random-permutation far-tail bound.
\end{abstract}

\noindent\textbf{2020 Mathematics Subject Classification.}
Primary 05A16, 05C31; Secondary 05C20, 05A17, 41A60, 60C05.

\medskip
\noindent\textbf{Keywords.}
Acyclic orientations; chromatic polynomial; Tutte polynomial; complete
multipartite graphs; saddle-point method; integer partitions; OEIS.

\section{Introduction}

For a graph \(G\), let \(\AO(G)\) denote the number of acyclic orientations
of \(G\).  Stanley's theorem \cite{stanley1973} gives
\[
  \AO(G)=(-1)^{|V(G)|}\chi_G(-1),
\]
where \(\chi_G\) is the chromatic polynomial.  Complete multipartite graphs
are a particularly tractable but asymptotically rich test case: their
chromatic polynomials have simple color-class expansions, yet the resulting
orientation counts exhibit several distinct saddle-point regimes.

\paragraph{Notation.}
Throughout, \(\log\) denotes the natural logarithm.  A complete
multipartite graph with part sizes \(\lambda_1,\ldots,\lambda_r\) is denoted
\(K_{\lambda_1,\ldots,\lambda_r}\), and
\(N=\lambda_1+\cdots+\lambda_r\) always denotes its total number of
vertices unless a theorem explicitly introduces another large parameter.
The symbol \(S(m,j)\) denotes a Stirling number of the second kind
\cite[Ch. 5]{comtet1974}.  We write
\[
  P_m(t)=\sum_{j=1}^m(-1)^{m+j}S(m,j)t^j
\]
for the Stirling-transform polynomials that occur in the integral
representation.  Falling factorials are denoted by
\((q)_J=q(q-1)\cdots(q-J+1)\), with \((q)_0=1\).
The integral formula below also covers the one-part graph \(K_{(N)}\) in the
multipartite notation, which is the edgeless graph on \(N\) vertices and has
one acyclic orientation.  The Tutte identity \(H_s(G)=sT_G(1+s,0)\) is used
only for connected multipartite graphs, i.e. graphs with at least two
nonempty parts.  Boundary cases such as \(k=1\), \(m=1\), and \(p=2\) are
included in the stated ranges; \(m=1\) gives \(K_n\) and
\(\AO(K_n)=n!\).

The two main OEIS arrays are A267383 \cite{oeis267383}, which records
\(\AO(T(N,p))\) for Turán graphs, and A372326 \cite{oeis372326}, which records
\(\AO(K_{k,\ldots,k})\) when the number of parts and the common part size are
varied.  A third diagonal sequence, A372084 \cite{oeis372084}, records the case
\(\AO(K_{n,\ldots,n})\) with \(n\) parts each of size \(n\).  These views are
complementary: A267383 fixes the number of parts \(p\) and lets the balanced
part sizes grow like \(N/p\), while A372326 and A372084 probe the product
regime in which the number of equal-size parts also grows.

Our first contribution is an exact integral representation for
\(\AO(K_{\lambda_1,\ldots,\lambda_r})\), together with a Gamma-weighted
extension for the negative chromatic axis
\[
  H_s(K_\lambda)=(-1)^N\chi_{K_\lambda}(-s),
\]
which equals \(sT_{K_\lambda}(1+s,0)\) when \(K_\lambda\) is connected.  This
turns the problem into a family of one- and several-dimensional saddle-point
problems.
We also develop the corresponding multivariate EGF and conditional ACSV
transfer theorem for blow-ups of an arbitrary fixed base graph.  For a fixed
number of parts in the complete-base case, we prove both the balanced A267383
formula and the more general fixed-proportion theorem.  For a growing number
of parts, we prove fixed part-size, finite-profile, and equal-size window
asymptotics, and then lift these regimes to the same Tutte axis.
For \(p=2\), the fixed-column asymptotic is equivalent to the previously
studied complete bipartite/poly-Bernoulli case; the theorem below treats
arbitrary fixed \(p\), with the genuinely new complete multipartite extension
occurring for \(p\geq3\).

The second contribution is a partition-sum program for A372395 and A370613.
We derive exact product-integral formulae, prove the quadratic-energy
partition model that gives the logarithmic constants, and prove the actual
OEIS sums.  The proof first handles fixed largest-part windows
\(\lambda_1\leq R\sqrt n\), then extends the comparison to a growing window
and uses Proposition \ref{prop:random-runs}, a random-permutation
interpretation of \(\AO(K_\lambda)/N!\), for the remaining far tail.

\paragraph{Scope of the contribution.}
The analytic part of the paper combines smooth-point ACSV, one-dimensional
Laplace analysis, and Khintchine-type coefficient estimates for weighted
partitions with a new exact multipartite integral and several uniform
comparison estimates.  We also establish a general ACSV framework for
chromatic evaluations of fixed graph blow-ups, with complete multipartite
graphs as the complete-base specialization.  The explicit OEIS contributions
come from combining the exact multipartite integral with uniform estimates:
this proves the fixed-column conjecture for A267383, gives Tutte-axis
extensions, and identifies the partition sums A372395 and A370613 with a
quadratic-energy partition saddle.

\paragraph{Overview of techniques.}
The fixed number-of-parts results use multivariate singularity analysis near
smooth strictly minimal critical points \cite{pemantlewilsonmelczer2024}.  The
balanced fixed-column case includes a direct strict-minimality verification;
the more general fixed-proportion statement is formulated under the
corresponding standard hypothesis.  The
product-regime results use the Gamma integral above and one-dimensional
Laplace expansions for products of the polynomials \(P_m(t)\), in the
analytic-combinatorial style of Flajolet and Sedgewick
\cite{flajolet2009}.  The partition-sum theorem combines a Khintchine
probabilistic representation of a weighted partition model, local limit
estimates for triangular arrays in the spirit of Bender \cite{bender1973}, and a
comparison between \(P_m(t)/t^m\) and the quadratic energy
\(\exp(-m^2/(2t))\) in the largest-part range relevant to the saddle.

\paragraph{Organization.}
Section 2 proves the exact integral formulae, the Tutte-axis extension, the
random-permutation interpretation, and the multivariate generating function.
Section 3 gives the fixed-base graph blow-up extension.  Section 4 treats
fixed numbers of parts, both balanced and fixed-proportion.  Sections 5 and 6
prove the fixed part-size, finite-profile, and equal-size window product
asymptotics.  Sections 7--9 prove the partition-sum theorem.  Section 10
records numerical checks, Section 11 lists open problems, and Section 12
discusses related work.  Appendix A collects OEIS-ready consequences.

The coverage of the main regimes may be summarized as
\[
\begin{array}{c|c|c}
\hbox{regime} & \hbox{scale} & \hbox{result}\\
\hline
\hbox{fixed number of parts} & p\ \hbox{fixed},\ N\to\infty
  & \hbox{Theorems \ref{thm:a267383}, \ref{thm:tutte-axis}}\\
\hbox{fixed proportions}
  & \lambda_i=\alpha_iN+O(1),\ \hbox{strict minimality}
  & \hbox{Theorem \ref{thm:fixed-proportions}}\\
\hbox{Tutte axis} & s>0\ \hbox{fixed}
  & \hbox{Theorems \ref{thm:tutte-axis},
     \ref{thm:tutte-product-regimes}}\\
\hbox{fixed graph blow-ups}
  & H\ \hbox{fixed},\ \lambda_i\sim\alpha_iN,\ \hbox{ACSV hypotheses}
  & \hbox{Theorem \ref{thm:graph-blowup-acsv}}\\
\hbox{fixed part size} & m=k\ \hbox{fixed},\ n\to\infty
  & \hbox{Theorems \ref{thm:fixed-part-size}, \ref{thm:tutte-product-regimes}}\\
\hbox{finite profiles} & \lambda_i\leq m\ \hbox{fixed}
  & \hbox{Theorems \ref{thm:finite-profile}, \ref{thm:tutte-product-regimes}}\\
\hbox{equal-size windows} & 1\leq m\leq Cn^J,\ J\ \hbox{fixed}
  & \hbox{Theorems \ref{thm:linear-window}, \ref{thm:all-poly-equal}}\\
\hbox{partition sums} & \lambda\vdash n
  & \hbox{Theorem \ref{thm:partition-sums}}
\end{array}
\]

\section{The exact integral}

\begin{proposition}[multipartite integral]\label{prop:integral}
For positive integers \(\lambda_1,\ldots,\lambda_r\),
\[
  \AO(K_{\lambda_1,\ldots,\lambda_r})
  =
  \int_0^\infty e^{-t}\prod_{i=1}^r P_{\lambda_i}(t)\,dt.
\]
\end{proposition}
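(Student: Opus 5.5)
The plan is to go through Stanley's theorem and the color-class expansion of the chromatic polynomial of a complete multipartite graph, and then to use the Gamma integral to turn a falling factorial evaluated at a negative argument into a moment of \(e^{-t}\).

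First, the expansion. In a proper coloring of \(K_{\lambda_1,\ldots,\lambda_r}\), two vertices in different parts must receive different colors. Vertices inside a part are unconstrained. So a proper \(q\)-coloring amounts to the following data: for each part \(i\), a set partition of that part into \(j_i\) nonempty blocks, and an injective assignment of distinct colors to all \(j_1+\cdots+j_r\) blocks. This gives
\[
  \chi_{K_\lambda}(q)=\sum_{j_1,\ldots,j_r\geq1}\Bigl(\prod_{i=1}^r S(\lambda_i,j_i)\Bigr)(q)_{j_1+\cdots+j_r},
\]
with \((q)_J\) the falling factorial from the Notation paragraph.

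Next, the evaluation at \(q=-1\). We have \((-1)_J=(-1)(-2)\cdots(-J)=(-1)^J J!\), and also \(J!=\int_0^\infty e^{-t}t^J\,dt\). Write \(J=\sum_i j_i\). Substituting gives
\[
  (-1)^N\chi_{K_\lambda}(-1)=\sum_{j}\prod_i(-1)^{\lambda_i+j_i}S(\lambda_i,j_i)\int_0^\infty e^{-t}t^{\sum j_i}\,dt .
\]
Here I have used \((-1)^N=\prod_i(-1)^{\lambda_i}\) and \((-1)^J=\prod_i(-1)^{j_i}\).

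Finally, the sum is finite, since \(j_i\leq\lambda_i\), so sum and integral may be interchanged. The inner sum then factors part by part. The factor for part \(i\) is exactly \(P_{\lambda_i}(t)=\sum_{j}(-1)^{\lambda_i+j}S(\lambda_i,j)t^j\). Stanley's theorem \(\AO(G)=(-1)^{|V(G)|}\chi_G(-1)\) then yields the stated integral.

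There is no real analytic obstacle; the step needing the most care is the color-class expansion. I must justify that blocks in different parts need distinct colors while blocks within one part do too. The latter holds by the definition of the set partition as the partition of a part into color classes. This makes the correspondence with colorings bijective.

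The one-part case needs a check: \(P_N\) integrated against \(e^{-t}\) should give \(\sum_j(-1)^{N+j}S(N,j)j!=1\). This is consistent, since \(\chi=q^N\) there.

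The same computation with \((-s)_J=(-1)^J\Gamma(s+J)/\Gamma(s)\) and the weight \(t^{s-1}e^{-t}/\Gamma(s)\) would give the \(H_s\) extension. I would note this in passing.
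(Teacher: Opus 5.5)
Your proposal is correct and follows the paper's proof essentially step for step: the color-class expansion \(\chi_{K_\lambda}(q)=\sum_{j}(q)_{j_1+\cdots+j_r}\prod_i S(\lambda_i,j_i)\), the evaluation \((-1)_J=(-1)^J J!\) combined with Stanley's theorem, and the Gamma integral \(J!=\int_0^\infty e^{-t}t^J\,dt\), after which the finite sum factors into \(\prod_i P_{\lambda_i}(t)\). Your explicit sign bookkeeping \((-1)^{N+J}=\prod_i(-1)^{\lambda_i+j_i}\) and the one-part sanity check are slightly more detailed than the paper's write-up, but the argument is the same.
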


\begin{proof}
In a proper coloring of \(K_{\lambda_1,\ldots,\lambda_r}\), any fixed color
can occur in at most one part.  If the \(i\)-th part uses \(j_i\) colors, its
\(\lambda_i\) labelled vertices are partitioned into \(j_i\) nonempty color
classes in \(S(\lambda_i,j_i)\) ways.  With \(J=j_1+\cdots+j_r\), the color
classes receive distinct colors in \((q)_J\) ways.  Hence
\[
  \chi_{K_{\lambda_1,\ldots,\lambda_r}}(q)
  =
  \sum_{j_1,\ldots,j_r}
  (q)_{j_1+\cdots+j_r}\prod_{i=1}^r S(\lambda_i,j_i).
\]
Here \((q)_J\) is the falling factorial; hence for the non-negative integer
\(J\),
\[
  (-1)_J=(-1)(-2)\cdots(-J)=(-1)^J J!.
\]
Stanley's theorem gives
\[
  \AO(K_{\lambda_1,\ldots,\lambda_r})
  =
  \sum_{j_1,\ldots,j_r}
  (-1)^{\lambda_1+\cdots+\lambda_r+J}
  J!\prod_{i=1}^r S(\lambda_i,j_i).
\]
Finally, since \(J\geq0\), the Gamma integral gives
\[
  J!=\Gamma(J+1)=\int_0^\infty e^{-t}t^J\,dt,
\]
and the displayed integral is exactly the same sum after expanding the
product of the \(P_{\lambda_i}\)'s.
\end{proof}

\begin{proposition}[Tutte-axis integral]\label{prop:tutte-integral}
Let \(s>0\), \(N=\lambda_1+\cdots+\lambda_r\), and
\[
  H_s(K_{\lambda_1,\ldots,\lambda_r})
  =
  (-1)^N\chi_{K_{\lambda_1,\ldots,\lambda_r}}(-s).
\]
Then
\[
  H_s(K_{\lambda_1,\ldots,\lambda_r})
  =
  \frac1{\Gamma(s)}
  \int_0^\infty e^{-t}t^{s-1}
  \prod_{i=1}^rP_{\lambda_i}(t)\,dt.
\]
If the graph is connected, then
\[
  H_s(G)=s\,T_G(1+s,0).
\]
\end{proposition}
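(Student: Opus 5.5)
The plan is to follow the proof of Proposition \ref{prop:integral} verbatim, replacing the evaluation at \(q=-1\) by an evaluation at \(q=-s\). We start from the colour-class expansion established there,
\[
  \chi_{K_{\lambda_1,\ldots,\lambda_r}}(q)
  =
  \sum_{j_1,\ldots,j_r}(q)_{J}\prod_{i=1}^r S(\lambda_i,j_i),\qquad J=j_1+\cdots+j_r,
\]
which is a polynomial identity in \(q\) and may therefore be evaluated at \(q=-s\). The falling factorial becomes
\[
  (-s)_J=(-s)(-s-1)\cdots(-s-J+1)=(-1)^J\,s(s+1)\cdots(s+J-1)=(-1)^J\frac{\Gamma(s+J)}{\Gamma(s)},
\]
and for \(s>0\) we have \(\Gamma(s+J)=\int_0^\infty e^{-t}t^{s+J-1}\,dt\). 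Multiplying by \((-1)^N\) gives
\[
  H_s=\frac{1}{\Gamma(s)}\int_0^\infty e^{-t}t^{s-1}\sum_{j_1,\ldots,j_r}\prod_{i=1}^r(-1)^{\lambda_i+j_i}S(\lambda_i,j_i)\,t^{j_i}\,dt.
\]
Since the sum is finite, it factors as \(\prod_i P_{\lambda_i}(t)\), and the interchange of sum and integral is trivial. At \(s=1\) this reduces to Proposition \ref{prop:integral}, which is a useful consistency check.

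For the Tutte identity, we use the standard relation for a graph \(G\) with \(k(G)\) components, \(N\) vertices and \(|E|\) edges:
\[
  \chi_G(q)=(-1)^{N-k(G)}q^{k(G)}T_G(1-q,0).
\]
This follows from the Whitney rank expansion of \(T_G\) and the subgraph expansion of \(\chi_G\). We take \(k(G)=1\) and \(q=-s\), which gives \(\chi_G(-s)=(-1)^{N-1}(-s)T_G(1+s,0)=(-1)^N sT_G(1+s,0)\). Hence \(H_s(G)=sT_G(1+s,0)\). The multipartite graph is connected exactly when it has at least two nonempty parts, matching the hypothesis.

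The only point needing care is sign and convention bookkeeping in the chromatic–Tutte relation. The rising factorial is correct for \(J=0\) (empty product, \(\Gamma(s)/\Gamma(s)=1\)), and the \(q^{k(G)}\) factor requires connectivity; I would verify both on \(K_2\), where \(T=x\) and \(\chi=q(q-1)\), as a sanity check. Neither step is a real obstacle.
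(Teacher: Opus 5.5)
Your proposal is correct and follows the paper's proof essentially step for step: substitute \(q=-s\) in the colour-class expansion, rewrite \((-s)_J=(-1)^J\Gamma(J+s)/\Gamma(s)\), insert the Gamma integral, and specialize \(\chi_G(q)=(-1)^{N-c(G)}q^{c(G)}T_G(1-q,0)\) to \(c(G)=1\), \(q=-s\). The only slip is your side remark that the multipartite graph is connected \emph{exactly} when it has at least two nonempty parts, since the one-vertex graph \(K_{(1)}\) is also connected; this plays no role in the argument.
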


\begin{proof}
In the chromatic-polynomial expansion from the preceding proof, put
\(q=-s\).  Since
\[
  (-s)_J=(-1)^J s(s+1)\cdots(s+J-1)
  =
  (-1)^J\frac{\Gamma(J+s)}{\Gamma(s)},
\]
we get
\[
  (-1)^N\chi_{K_{\lambda_1,\ldots,\lambda_r}}(-s)
  =
  \sum_{j_1,\ldots,j_r}
  (-1)^{N+J}\frac{\Gamma(J+s)}{\Gamma(s)}
  \prod_i S(\lambda_i,j_i).
\]
The integral identity follows from
\[
  \Gamma(J+s)=\int_0^\infty e^{-t}t^{J+s-1}\,dt.
\]
The Tutte identity follows from
\[
  \chi_G(q)=(-1)^{|V(G)|-c(G)}q^{c(G)}T_G(1-q,0),
\]
where \(c(G)\) is the number of connected components
\cite{bollobas1998}.  For connected \(G\), this becomes
\(\chi_G(q)=(-1)^{N-1}qT_G(1-q,0)\), and substitution \(q=-s\) gives
\((-1)^N\chi_G(-s)=sT_G(1+s,0)\).
All complete multipartite graphs considered below have at least two nonempty
parts when the Tutte identity is invoked, and are therefore connected.
\end{proof}

\begin{proposition}[random runs]\label{prop:random-runs}
Let the vertices of \(K_{\lambda_1,\ldots,\lambda_r}\) be colored by their
part.  Choose a uniformly random permutation \(\pi\) of the \(N\) labelled
vertices, and merge each maximal consecutive run of vertices of the same
color.  If the resulting run lengths are \(L_1(\pi),\ldots,L_M(\pi)\), then
\[
  \frac{\AO(K_{\lambda_1,\ldots,\lambda_r})}{N!}
  =
  \mathbb E_\pi\prod_{j=1}^M\frac1{L_j(\pi)!}.
\]
\end{proposition}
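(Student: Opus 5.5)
The plan is a direct bijective argument. First identify acyclic orientations of \(K_{\lambda_1,\ldots,\lambda_r}\) with certain ordered sequences of monochromatic blocks. Then observe that the run decomposition of a permutation maps the \(N!\) permutations onto these sequences with fibres of size \(\prod_j L_j!\). The integral of Proposition \ref{prop:integral} is not needed.

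\emph{Step 1 (block sequences).} Call a sequence \((B_1,\ldots,B_M)\) of nonempty, pairwise disjoint vertex sets covering \(V\) \emph{admissible} if each \(B_j\) lies inside a single part and consecutive blocks \(B_j,B_{j+1}\) lie in different parts.
\begin{itemize}
\item From an admissible sequence, orient every edge from the block of smaller index to the block of larger index. Edges join different parts, so they never lie inside one block, and the result is acyclic.
\item Conversely, given an acyclic orientation, let \(B_1\) be its set of sources. \(B_1\) is nonempty by acyclicity, and it is monochromatic because two sources in different parts would be adjacent.
\item Delete \(B_1\) and iterate to define \(B_2,B_3,\ldots\).
\end{itemize}

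\emph{Step 2 (the main point: consecutive colours differ).} We must show \(B_{j+1}\) is not in the same part as \(B_j\). Let \(v\) be a vertex of the remaining graph lying in the part of \(B_j\). Since \(v\notin B_j\), it was not a source at stage \(j\), so it has an in-neighbour \(u\) in the graph remaining at that stage. Being adjacent to \(v\), \(u\) lies in a different part, hence \(u\notin B_j\), and \(u\) survives the deletion. So \(v\) is still not a source, and \(B_{j+1}\) avoids the part of \(B_j\).

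\emph{Step 3 (the two maps are inverse).} Starting from an admissible sequence, the sources of the induced orientation are exactly \(B_1\): every vertex of a later block \(B_j\) has an in-edge from \(B_{j-1}\), which lies in another part. Induction then recovers the whole sequence. Conversely, the orientation built from the source-peeling sequence is the original one. Indeed, each edge joins vertices in different layers, and it must point from the earlier layer to the later one: an edge pointing backwards would mean a vertex of the earlier layer was not a source at its stage. Hence \(\AO(K_{\lambda_1,\ldots,\lambda_r})\) equals the number of admissible sequences.

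\emph{Step 4 (counting permutations).} A permutation \(\pi\) of the \(N\) vertices, cut into maximal monochromatic runs, yields an admissible sequence \((B_1,\ldots,B_M)\): the blocks are the runs, and maximality forces consecutive runs to have different colours. Conversely, the permutations yielding a given admissible sequence with \(|B_j|=L_j\) are obtained by listing \(B_1,\ldots,B_M\) in order with arbitrary internal orderings. The runs are automatically maximal because consecutive blocks differ in colour, so there are exactly \(\prod_j L_j!\) such permutations. Therefore
\[
\sum_{\pi}\prod_{j=1}^{M}\frac{1}{L_j(\pi)!}
=\sum_{\text{admissible }(B_j)}\ \prod_j L_j!\cdot\prod_j\frac1{L_j!}
=\AO(K_{\lambda_1,\ldots,\lambda_r}).
\]
Dividing by \(N!\) gives the stated expectation.
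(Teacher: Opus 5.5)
Your proof is correct, and it reaches the same intermediate object as the paper: ordered sequences of monochromatic blocks in which adjacent blocks have different colours. It gets there by a different route.

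The paper never works with orientations directly. It invokes the region theorem (Stanley, Greene--Zaslavsky) to identify acyclic orientations with chambers of the graphical arrangement $\{x_a=x_b\}$, where $a,b$ lie in different parts. It then identifies each chamber with a ``canonical multipartite weak order'' by sorting the coordinates of a generic point and merging same-colour runs. The identity $N!=\sum_C\prod_j L_j(C)!$ becomes the statement that each permutation (a chamber of the braid arrangement) refines exactly one graphical chamber.

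You replace the arrangement theory with an explicit source-peeling bijection. Your Step 2 shows that a non-source in the colour of $B_j$ keeps its in-neighbour from another part after $B_j$ is deleted. Together with Step 3, this gives in elementary form exactly what the paper asserts more informally: the block structure is a chamber invariant, and distinct admissible sequences give distinct chambers.

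What each route buys:
\begin{itemize}
\item The paper's version is shorter, given the cited theorem, and it makes the fibre count $\prod_j L_j!$ geometrically transparent.
\item Yours is self-contained and gives the inverse map explicitly. It also shows exactly where completeness between parts is used: two sources of different colours would be adjacent.
\end{itemize}
Your Step 4 coincides with the paper's final counting step.
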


\begin{proof}
The graphical arrangement of \(K_{\lambda_1,\ldots,\lambda_r}\) contains the
hyperplanes \(x_a=x_b\) exactly for pairs of vertices in different parts.
By the region theorem of Stanley, equivalently the graphical-arrangement
interpretation of Greene and Zaslavsky \cite{stanley1973,greenezaslavsky1983},
acyclic orientations are counted by the chambers of this arrangement.

We now describe the chambers explicitly.  Choose a generic point in a
chamber and list the labelled vertices in increasing order of their
coordinates.  Whenever consecutive vertices in this list have the same
part-color, merge the entire maximal same-color run into one block.  The
result is an ordered list
\[
  B_1<B_2<\cdots<B_M
\]
whose blocks are monochromatic and whose adjacent blocks have different
colors.  This ordered block structure is independent of the chosen generic
point in the chamber: changing the relative order of vertices inside one
monochromatic block crosses no hyperplane, while moving a vertex past a
vertex of a different color would cross one of the arrangement hyperplanes.
Conversely, any ordered block structure with monochromatic blocks and
different colors in adjacent blocks defines a chamber by requiring every
vertex in \(B_i\) to have smaller coordinate than every vertex in \(B_j\) for
\(i<j\); no condition is imposed on the internal coordinates of a block.
Thus chambers are exactly these canonical multipartite weak orders.

Now let \(\pi\) be an ordinary permutation of the \(N\) labelled vertices.
Merging the maximal same-color runs of \(\pi\) produces one of the canonical
block structures above, hence one chamber.  If a chamber has block sizes
\(L_1,\ldots,L_M\), then the permutations that refine it are obtained by
ordering the vertices inside each block, independently.  There are therefore
\(\prod_j L_j!\) such permutations, and no others.  Hence
\[
  N!
  =
  \sum_{\hbox{\scriptsize chambers }C}\prod_{j}L_j(C)! .
\]
Taking the expectation of the reciprocal class size over all permutations
counts each chamber once and gives the displayed identity.
\end{proof}

\begin{lemma}[edge monotonicity]\label{lem:edge-monotonicity}
If \(G\) is a spanning subgraph of \(G'\), then
\[
  \AO(G)\leq \AO(G').
\]
\end{lemma}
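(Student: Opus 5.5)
The plan is to construct an injection from acyclic orientations of \(G\) into acyclic orientations of \(G'\). We may reduce to the case where \(G'\) has exactly one more edge than \(G\), and then induct on \(|E(G')\setminus E(G)|\); spanning subgraphs share the vertex set, so the induction moves along a chain of single-edge additions. So assume \(G'=G+e\) with \(e=\{u,v\}\). For each acyclic orientation \(\omega\) of \(G\), we need at least one of the two orientations of \(e\) to keep \(\omega\) acyclic, and distinct \(\omega\) must receive distinct extensions.

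First step. Fix an acyclic orientation \(\omega\) of \(G\). Its reachability relation is a partial order, since acyclicity means no directed cycles. Adding \(u\to v\) creates a cycle exactly when \(\omega\) has a directed path from \(v\) to \(u\). Likewise, adding \(v\to u\) creates a cycle exactly when \(\omega\) has a directed path from \(u\) to \(v\). These two paths cannot both exist, or their concatenation would be a directed closed walk in \(\omega\), which contains a directed cycle. Hence at least one extension is acyclic.

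Second step. Define the map canonically: orient \(e\) as \(u\to v\) unless that creates a cycle, and otherwise as \(v\to u\). Restricting the resulting orientation of \(G'\) to \(E(G)\) gives back \(\omega\), so the map is injective. Therefore \(\AO(G)\le\AO(G+e)\), and induction gives the lemma.

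Alternative check. One could instead compare via the deletion–contraction recurrence \(\AO(G')=\AO(G'-e)+\AO(G'/e)\), where loops in the contraction give zero and multiple edges are merged. This also gives monotonicity immediately, because \(\AO\ge0\).

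The main obstacle is the small point that a directed path \(u\to\cdots\to v\) combined with one \(v\to\cdots\to u\) yields a directed cycle. This is routine: a closed directed walk always contains a directed cycle.
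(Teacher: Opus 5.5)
Your proof is correct, but your main argument takes a different route from the paper's; the paper's proof is exactly your ``alternative check.'' The paper adds one non-loop edge at a time and reads off \(\AO(G+e)=\AO(G)+\AO((G+e)/e)\geq\AO(G)\) from the deletion--contraction recurrence. It also remarks that an added hyperplane can only refine the chambers of the graphical arrangement.

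Your main argument instead builds an explicit injection. You extend each acyclic orientation \(\omega\) of \(G\) across \(e=\{u,v\}\), using only the fact that directed paths from \(u\) to \(v\) and from \(v\) to \(u\) cannot coexist in an acyclic digraph. This is essentially the easy half of the standard proof of the recurrence. The orientations \(\omega\) for which \emph{both} extensions are acyclic (no directed path between \(u\) and \(v\) in either direction) are exactly those counted by \(\AO((G+e)/e)\).

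So your route is more self-contained: it needs neither the recurrence nor any conventions about loops and parallel edges in the contraction. The paper's route gives the exact increment \(\AO(G+e)-\AO(G)=\AO((G+e)/e)\) and ties the lemma to the chamber picture used in Proposition~\ref{prop:random-runs}.

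Both arguments need \(e\) not to be a loop. In yours, \(u\neq v\) is what makes the concatenated closed walk nontrivial. This holds automatically for the simple graphs considered here.
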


\begin{proof}
It is enough to add one edge at a time.  If \(e\) is a non-loop edge not in
\(G\), the deletion-contraction recurrence for acyclic orientations gives
\[
  \AO(G+e)=\AO(G)+\AO((G+e)/e)\geq \AO(G).
\]
Equivalently, adding an edge adds one hyperplane to the graphical
arrangement and can only refine its regions.
\end{proof}

\begin{lemma}[one large part]\label{lem:one-large-part}
For \(0\leq L\leq n\),
\[
  \AO(K_{L,1,\ldots,1})=(n-L)!(n-L+1)^L,
\]
where there are \(n-L\) singleton parts.
\end{lemma}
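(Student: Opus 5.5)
The plan is to prove the formula directly by counting acyclic orientations, and to use Proposition~\ref{prop:integral} as an independent check. Write \(k=n-L\) for the number of singleton vertices. The graph \(K_{L,1,\ldots,1}\) is the join of an independent set \(I\) of size \(L\) with a clique \(K_k\) on the singletons. Equivalently, it is \(K_n\) with the edges inside an \(L\)-set removed. An acyclic orientation of the clique part is a linear order \(v_1<\cdots<v_k\), and there are \(k!\) of these. We fix one such order and count the extensions to the edges between \(I\) and the clique. The orientation of each edge \(xv_i\), for \(x\in I\), has to be chosen.

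The key step is the following claim. Once the clique order is fixed, an orientation of the edges at a single vertex \(x\in I\) creates no cycle exactly when \(x\) is inserted into one of the \(k+1\) gaps of the order. That is, there is \(g\in\{0,\ldots,k\}\) with \(v_i\to x\) for \(i\le g\) and \(x\to v_i\) for \(i>g\).

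For necessity, suppose \(x\to v_i\) and \(v_j\to x\) with \(i<j\). Then \(x\to v_i\to v_j\to x\) is a directed triangle.

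For sufficiency, we must show the whole orientation is acyclic when every \(x\in I\) is placed in a gap. Vertices of \(I\) are pairwise nonadjacent, so any directed cycle alternates between clique vertices and \(I\)-vertices in some pattern. Each \(I\)-vertex can be placed in the total order at a position consistent with its gap, breaking ties among \(I\)-vertices in the same gap arbitrarily. This extends the clique order to a linear order on all \(n\) vertices, and every edge points forward in it. Hence there is no cycle.

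The gap choices for distinct \(x\in I\) are independent, because no edge joins two vertices of \(I\). This gives \((k+1)^L\) extensions per clique order, and so \(\AO=k!\,(k+1)^L=(n-L)!(n-L+1)^L\). The boundary cases work automatically. For \(L=0\) we get \(n!\). For \(L=n\) we get \(0!\cdot1^n=1\), which is the edgeless graph, consistent with the convention stated for \(K_{(N)}\).

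As a cross-check, Proposition~\ref{prop:integral} gives \(\int_0^\infty e^{-t}t^kP_L(t)\,dt\). Expanding \(P_L\) turns this into \(\sum_j(-1)^{L+j}S(L,j)(k+j)!\). This is the standard finite-difference identity \(\sum_j S(L,j)(-1)^{L-j}(k+j)!/k!=(k+1)^L\), which follows from \(x^L=\sum_j S(L,j)(x)_j\) applied to the operator representation. We do not need this check for the proof.

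The only delicate point is the sufficiency direction of the gap claim, namely the simultaneous acyclicity. Building the explicit linear extension handles it, so I expect no real obstacle.
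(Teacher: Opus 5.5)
Your proof is correct and uses the same decomposition as the paper: order the $n-L$ singletons in $(n-L)!$ ways, then place each vertex of the large part into one of the $n-L+1$ gaps. The paper justifies this bijection by identifying these configurations with the canonical multipartite weak orders (chambers) of Proposition~\ref{prop:random-runs}. You instead verify it directly, using the directed-triangle argument for necessity and the explicit linear extension for sufficiency, which makes your version self-contained at the cost of a few more lines.
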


\begin{proof}
Order the \(n-L\) singleton vertices, and then assign each of the \(L\)
labelled vertices of the large part to one of the \(n-L+1\) gaps before,
after, or between consecutive singleton vertices.  Vertices assigned to the
same gap form one tied block, and these are exactly the multipartite weak
orders counted in Proposition \ref{prop:random-runs}.
\end{proof}

We will also need the multivariate exponential generating function
\[
  \sum_{\lambda_1,\ldots,\lambda_p\geq0}
  \chi_{K_{\lambda_1,\ldots,\lambda_p}}(q)
  \prod_{i=1}^p \frac{x_i^{\lambda_i}}{\lambda_i!}
  =
  \left(e^{x_1}+\cdots+e^{x_p}-(p-1)\right)^q.        \tag{1}
\]
For positive integer \(q\), this follows by considering each color
independently.  A single color contributes either \(1\) if it is unused, or
\(e^{x_i}-1\) if it is assigned to a nonempty labelled set of vertices in the
\(i\)-th part.  Multiplying the contribution
\[
  1+\sum_{i=1}^p(e^{x_i}-1)=e^{x_1}+\cdots+e^{x_p}-(p-1)
\]
over the \(q\) colors gives (1).  For each fixed monomial, both sides are
polynomials in \(q\).  On the left this is manifest from the finite
color-class expansion in falling factorials.  On the right it follows by
expanding the \(q\)-th power binomially around the constant term \(1\).
Hence the identity extends to all complex \(q\) as an identity of formal
power series \cite[Ch. 5]{stanley2011}.  Analytically, in
a neighborhood of the origin the right-hand side is interpreted as
\(\exp(q\Log(e^{x_1}+\cdots+e^{x_p}-(p-1)))\), where \(\Log\) is the
principal branch; the base equals \(1\) at the origin.

\section{Fixed graph blow-ups}

The multipartite generating function is the complete-base case of a more
general fixed graph blow-up identity.  Let \(H\) be a fixed simple graph on
vertex set \([p]=\{1,\ldots,p\}\).  For
\(\lambda=(\lambda_1,\ldots,\lambda_p)\), let \(H[\lambda]\) be the graph
obtained from \(H\) by replacing vertex \(i\) with an independent set of
size \(\lambda_i\), and by replacing each edge \(ij\in E(H)\) with all
edges between the \(i\)-th and \(j\)-th sets.  Write \(\mathrm{Ind}(H)\) for
the independent sets of \(H\), including the empty set, and set
\[
  B_H(x_1,\ldots,x_p)
  =
  \sum_{I\in\mathrm{Ind}(H)}
  \prod_{i\in I}(e^{x_i}-1).
\]

\begin{proposition}[chromatic EGF for fixed graph blow-ups]
\label{prop:blowup-egf}
For every fixed graph \(H\),
\[
  \sum_{\lambda_1,\ldots,\lambda_p\geq0}
  \chi_{H[\lambda]}(q)
  \prod_{i=1}^p\frac{x_i^{\lambda_i}}{\lambda_i!}
  =
  B_H(x_1,\ldots,x_p)^q .
\]
The identity is coefficientwise as a formal power series, with coefficients
polynomial in \(q\).  Near the origin, use the principal branch of
\(\Log B_H\); here \(B_H(0,\ldots,0)=1\).
\end{proposition}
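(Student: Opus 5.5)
The plan is to follow the same route as for identity (1): prove the identity for positive integer \(q\) by a color-by-color count, then extend to all \(q\) by polynomiality of both sides coefficientwise.

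\textbf{Integer \(q\).} Fix a positive integer \(q\). A proper \(q\)-coloring of \(H[\lambda]\) assigns to each color \(c\) a set of vertices \(V_c\). The sets \(V_c\) partition the vertex set, and each \(V_c\) must be independent in \(H[\lambda]\). Write \(V_c\cap(\text{part } i)\) for the portion of \(V_c\) lying in the \(i\)-th independent set. The set \(V_c\) is independent exactly when \(I_c=\{i: V_c\cap(\text{part } i)\neq\emptyset\}\) is an independent set of \(H\). Indeed, two vertices in the same part are never adjacent, and vertices in parts \(i\neq j\) are adjacent iff \(ij\in E(H)\).

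Hence for each color, the data "which labelled vertices of part \(i\) receive color \(c\), for each \(i\)" is a choice of \(I\in\mathrm{Ind}(H)\) together with a nonempty labelled set in each part \(i\in I\). The exponential generating function for one color is therefore \(\sum_{I}\prod_{i\in I}(e^{x_i}-1)=B_H\).

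A coloring is a distribution of the labelled vertices of each part among the \(q\) distinct colors. By the product rule for labelled structures, applied in each variable \(x_i\) separately, the EGF of the \(q\)-tuples is \(B_H^q\). Since the coefficient of \(\prod x_i^{\lambda_i}/\lambda_i!\) counts exactly the proper colorings of \(H[\lambda]\), it equals \(\chi_{H[\lambda]}(q)\). This step is essentially routine; the only care needed is to check that the multivariate labelled product is correct, with vertices of different parts labelled independently.

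\textbf{Polynomiality in \(q\).} I would verify that for a fixed monomial both sides are polynomials in \(q\).

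On the left, \(\chi_{H[\lambda]}(q)\) is a chromatic polynomial. Alternatively, group colorings by their color-class partition to get \(\sum_\pi (q)_{|\pi|}\), as in the proof of Proposition \ref{prop:integral}.

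On the right, write \(B_H=1+U\) with \(U=\sum_{I\neq\emptyset}\prod_{i\in I}(e^{x_i}-1)\). Every term of \(U\) has zero constant term, so \(U\) lies in the maximal ideal. Expand
\[
  B_H^q=\sum_{k\ge0}\binom{q}{k}U^k .
\]
Only \(k\le\lambda_1+\cdots+\lambda_p\) contribute to a given monomial, so each coefficient is a finite combination of the polynomials \(\binom{q}{k}\). This expansion also serves as the definition of \(B_H^q\) for complex \(q\) as a formal power series.

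\textbf{Extension and analytic interpretation.} The two polynomials agree at all positive integers, so they are equal identically. This gives the coefficientwise identity for all \(q\).

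For the analytic statement, \(B_H(0)=1\), and \(B_H\) is entire. So on a neighborhood of the origin where \(|B_H-1|<1\), the principal branch \(\Log B_H\) is analytic. There \(\exp(q\Log B_H)\) has Taylor series equal to the binomial series above, since both equal \(\exp(q\log(1+U))\) with \(\log(1+U)=\sum_{m\ge1}(-1)^{m+1}U^m/m\) as formal series.

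The main obstacle, to the extent there is one, is stating the formal-versus-analytic identification cleanly rather than any combinatorial difficulty. As a sanity check, setting \(H=K_p\) gives \(\mathrm{Ind}(H)=\{\emptyset,\{1\},\ldots,\{p\}\}\) and \(B_H=e^{x_1}+\cdots+e^{x_p}-(p-1)\), which recovers (1).
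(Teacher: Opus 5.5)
Your proposal is correct and takes essentially the same route as the paper. Both prove the identity at positive integer \(q\) by a per-color EGF count, giving \(B_H\) for one color and \(B_H^q\) for \(q\) independent colors, and then extend to all \(q\) by coefficientwise polynomiality: the falling-factorial color-class expansion on the left and the binomial expansion around the constant term \(1\) on the right. Your extra details (independence of \(V_c\) exactly when its support lies in \(\mathrm{Ind}(H)\), the bound \(k\le\lambda_1+\cdots+\lambda_p\), and matching the binomial series with \(\exp(q\Log B_H)\)) spell out what the paper leaves implicit.
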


\begin{proof}
For a positive integer \(q\), consider one color.  The set of bags in which
that color appears must be an independent set of \(H\), and for each chosen
bag \(i\) it contributes a nonempty labelled subset, whose EGF is
\(e^{x_i}-1\).  Thus one color contributes \(B_H(x)\).  The \(q\) colors are
independent, giving \(B_H(x)^q\).  For each fixed monomial, both sides are
polynomials in \(q\): on the left this follows from the usual color-class
expansion in falling factorials, and on the right from expanding around the
constant term \(1\).  Hence the identity holds for all complex \(q\).
\end{proof}

\begin{theorem}[conditional fixed-base blow-up asymptotic]
\label{thm:graph-blowup-acsv}
Let \(H\) be fixed on \([p]\), let \(s>0\), and put
\[
  F_H(y_1,\ldots,y_p)=B_H(-y_1,\ldots,-y_p).
\]
Let \(\alpha_i>0\) with \(\sum_i\alpha_i=1\), and let
\(\lambda_i=\alpha_iN+\delta_i\), where \(\delta_i=O(1)\) and
\(\sum_i\delta_i=0\).  Assume that, after possibly relabelling the vertices
of \(H\), there is a point \(r\in(0,\infty)^p\) such that
\[
  F_H(r)=0,\qquad F_{H,p}(r)\neq0,\qquad
  \frac{\alpha_i}{r_i}
  =
  \frac{\alpha_p F_{H,i}(r)}{r_pF_{H,p}(r)}
  \quad(1\leq i<p).
\]
Let \(h\) be the local analytic solution of
\[
  F_H(y_1,\ldots,y_{p-1},h(y_1,\ldots,y_{p-1}))=0,
  \qquad h(r_1,\ldots,r_{p-1})=r_p,
\]
and define
\[
  \Phi_H(y_1,\ldots,y_{p-1})
  =
  -\sum_{i=1}^{p-1}\alpha_i\log y_i
  -\alpha_p\log h(y_1,\ldots,y_{p-1}).
\]
Let \(\mathcal H_H\) be the Hessian of \(\Phi_H\) at
\((r_1,\ldots,r_{p-1})\), and set
\[
  A_H=-r_pF_{H,p}(r),
\]
with the local branch induced by the chosen branch of \(F_H^{-s}\).
Assume that this smooth critical point is strictly minimal and
nondegenerate for the direction \(\alpha\), so that \(\mathcal H_H\) is
nonsingular and is positive definite in the positive real case.  Then
\[
\begin{aligned}
  &(-1)^N\chi_{H[\lambda]}(-s)\\
  &\quad\sim
  \left(\prod_{i=1}^p\lambda_i!\right)
  \left(\prod_{i=1}^p r_i^{-\delta_i}\right)
  \frac{\lambda_p^{s-1}}{\Gamma(s)}
  \frac{A_H^{-s}}{\prod_{i=1}^{p-1}r_i}
  \frac{\exp(N\Phi_H(r_1,\ldots,r_{p-1}))}
       {(2\pi N)^{(p-1)/2}\sqrt{\det\mathcal H_H}} .
\end{aligned}
\]
If \(H\) is connected, then \(H[\lambda]\) is connected for all sufficiently
large \(N\), and this is also the Tutte-axis value
\(sT_{H[\lambda]}(1+s,0)\).
\end{theorem}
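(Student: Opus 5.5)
Substituting \(q=-s\) and \(x_i=-y_i\) in Proposition \ref{prop:blowup-egf} turns the chromatic evaluation into a coefficient extraction from a negative power:
\[
  \sum_{\lambda}(-1)^N\chi_{H[\lambda]}(-s)\prod_i\frac{y_i^{\lambda_i}}{\lambda_i!}
  =F_H(y)^{-s},
\]
since \(\prod_i x_i^{\lambda_i}=(-1)^N\prod_i y_i^{\lambda_i}\).  This is coefficientwise, because both sides are polynomial in \(q\) monomial by monomial.  Hence
\[
  (-1)^N\chi_{H[\lambda]}(-s)=\Bigl(\prod_i\lambda_i!\Bigr)[y^\lambda]F_H(y)^{-s}.
\]
The task is then the standard smooth-point ACSV asymptotic for a power of a denominator, \(G/F^{s}\) with \(G\equiv1\), in the direction \(\lambda\approx N\alpha\).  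I would follow the treatment in \cite{pemantlewilsonmelczer2024}.  By strict minimality, the minimal singularity in the direction \(\alpha\) is the point \(r\).  The critical point equations in the statement are exactly the smooth critical point equations \(\lambda_i y_iF_{H,i}\propto\) const at \(y=r\), rewritten by dividing by the \(p\)-th equation.

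The key steps are as follows.  First, write the Cauchy integral over a torus \(|y_i|=r_i-\epsilon\).  Second, do the inner integral in \(y_p\) by pushing the \(y_p\)-contour past \(h(y')\).  Near \(y_p=h(y')\) we have \(F_H=F_{H,p}\cdot(y_p-h)(1+O(y_p-h))\), so \(F_H^{-s}\) has an algebraic branch singularity.  Hankel's contour then gives the one-variable transfer
\[
  [y_p^{\lambda_p}]\,\bigl(c(1-y_p/h)\bigr)^{-s}\sim c^{-s}\frac{\lambda_p^{s-1}}{\Gamma(s)}h^{-\lambda_p},
  \qquad c=-hF_{H,p},
\]
which is uniform for \(y'\) near \(r'\).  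At \(y'=r'\) the constant is \(c=A_H\), and the branch of \(c^{-s}\) is inherited from the chosen branch of \(F_H^{-s}\).  Third, the remaining \((p-1)\)-dimensional integral becomes
\[
  \oint \prod_{i<p}y_i^{-\lambda_i-1}\,h(y')^{-\lambda_p}\,(\cdots)\,dy'.
\]
On the torus through \(r'\), the phase \(\prod_{i<p}y_i^{-\lambda_i}h^{-\lambda_p}\) equals \(\exp(N\Phi_H)\) times the factors \(r_i^{-\delta_i}\).  Using \(\sum_i\delta_i=0\), the \(\delta_i\) contribute exactly \(\prod_i r_i^{-\delta_i}\) to leading order.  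Substituting \(y_i=r_ie^{i\theta_i}\) and applying Laplace's method with Hessian \(\mathcal H_H\) in log-coordinates gives the factor \((2\pi N)^{-(p-1)/2}(\det\mathcal H_H)^{-1/2}\).  The Jacobian \(dy_i/y_i\) together with the extra \(y_i^{-1}\) produces \(1/\prod_{i<p}r_i\); I would check carefully whether the Hessian is taken in \(\log y\) or \(y\) coordinates so that these normalizations match the stated formula.  Away from \(r'\), strict minimality gives \(|h(y')|>r_p\) or an exponentially smaller integrand, so those contributions are negligible.

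I expect the main obstacle to be the uniformity in the second step.  The singular expansion must hold with \(\lambda_p^{s-1}\) uniformly in a neighbourhood of \(r'\), and the error from \(h(y')\ne h(r')\) must be controlled inside the Laplace integral.  I would handle this with a standard Hankel-contour estimate, valid for non-integer \(s\) and with analytic dependence on parameters, combined with the fact that \(\lambda_p=\alpha_pN+O(1)\).  Connectivity is the final point.  If \(H\) is connected and all \(\lambda_i\ge1\), which holds for large \(N\) because \(\alpha_i>0\), then every vertex in bag \(i\) is joined to all of bag \(j\) for \(ij\in E(H)\).  Paths in \(H\) lift to paths between bags, so \(H[\lambda]\) is connected, and the Tutte identity of Proposition \ref{prop:tutte-integral} (the general chromatic–Tutte relation there) gives \(sT_{H[\lambda]}(1+s,0)\).
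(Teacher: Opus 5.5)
Your proposal is correct and follows the paper's proof essentially step for step. You reduce to \([y^\lambda]F_H(y)^{-s}\) via the blow-up EGF at \(q=-s\), \(x=-y\), and extract the \(y_p\)-coefficient at \(h(y')\) to get \(\lambda_p^{s-1}A_H^{-s}h^{-\lambda_p}/\Gamma(s)\). You then do the \((p-1)\)-dimensional Laplace integral with phase \(\Phi_H\) and rounding factor \(\prod_i r_i^{-\delta_i}\), and your connectivity argument is more explicit than the paper's.

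Two bookkeeping points. First, the smooth critical equations are \(r_jF_{H,j}(r)\propto\alpha_j\), not \(\lambda_jy_jF_{H,j}\propto\) const. Second, in the normalization you flagged, the factor \(1/\prod_{j<p}r_j\) must be counted exactly once. With \(y_j=r_je^{\mathrm{i}\theta_j}\), the \(y_j^{-1}\) is used up by \(dy_j/y_j=\mathrm{i}\,d\theta_j\), and the \(1/\prod_{j<p}r_j\) instead comes from the \(\theta\)-Gaussian having matrix \(\mathrm{diag}(r')\,\mathcal H_H\,\mathrm{diag}(r')\) at the critical point. In \(y\)-coordinates, as in the paper, it is the prefactor \(\prod_{j<p}y_j^{-1}\), and the Gaussian uses \(\mathcal H_H\) itself.
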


\begin{proof}
By Proposition \ref{prop:blowup-egf}, at \(q=-s\) and after substituting
\(x_i=-y_i\), the sign \((-1)^N\) is exactly cancelled by coefficient
extraction.  Thus the exponential-generating-function coefficient to be
estimated is
\[
  [y_1^{\lambda_1}\cdots y_p^{\lambda_p}]F_H(y)^{-s}.
\]
The displayed critical equations are precisely the equations
\(\nabla\Phi_H(r_1,\ldots,r_{p-1})=0\), since implicit differentiation gives
\(h_i=-F_{H,i}/F_{H,p}\).

Near the smooth point,
\[
  F_H(y_1,\ldots,y_{p-1},y_p)
  =
  F_{H,p}(y',h(y'))(y_p-h(y'))+\cdots
  =
  A(y')\left(1-\frac{y_p}{h(y')}\right)+\cdots,
\]
where \(A(y')=-h(y')F_{H,p}(y',h(y'))\) and \(A(r')=A_H\).  Extracting the
\(y_p\)-coefficient therefore gives the local factor
\[
  \frac{\lambda_p^{s-1}}{\Gamma(s)}A_H^{-s}h(r')^{-\lambda_p}
\]
up to a relative \(1+o(1)\) error, uniformly on the \(N^{-1/2}\) saddle
scale.  The remaining \((p-1)\)-dimensional saddle integral has phase
\(\Phi_H\), analytic prefactor \(\prod_{i=1}^{p-1}r_i^{-1}\), and Gaussian
factor
\[
  \frac{(2\pi N)^{-(p-1)/2}}{\sqrt{\det\mathcal H_H}}.
\]
Finally
\[
  \prod_{i=1}^{p-1}r_i^{-\lambda_i}r_p^{-\lambda_p}
  =
  \exp(N\Phi_H(r'))\prod_{i=1}^p r_i^{-\delta_i},
\]
which supplies the rounding factor.  Multiplication by
\(\prod_i\lambda_i!\) converts the EGF coefficient to the chromatic value.
The connected Tutte-axis assertion follows from Proposition
\ref{prop:tutte-integral}.
\end{proof}

\begin{corollary}[balanced blow-ups of vertex-transitive bases]
\label{cor:vertex-transitive-blowup}
Let \(H\) be vertex-transitive on \(p\) vertices, and let
\[
  I_H(z)=\sum_{I\in\mathrm{Ind}(H)}z^{|I|}
\]
be its independence polynomial.  Suppose that \(I_H(-\tau)=0\) for some
\(\tau\in(0,1)\), put \(R=-\log(1-\tau)\), and assume that the resulting
smooth point is strictly minimal and nondegenerate in the balanced direction.
If \(\lambda_i=N/p+O(1)\) and \(\sum_i\lambda_i=N\), then, for fixed
\(s>0\),
\[
\begin{aligned}
  (-1)^N\chi_{H[\lambda]}(-s)
  &\sim
  \left(\prod_{i=1}^p\lambda_i!\right)
  \frac{(N/p)^{s-1}}{\Gamma(s)}
  \frac{A_H^{-s}}{R^{p-1}}
  \frac{R^{-N}}{(2\pi N)^{(p-1)/2}
  \sqrt{\det\mathcal H_H}},
\end{aligned}
\]
where
\[
  A_H=\frac{R(1-\tau)I_H'(-\tau)}{p}
\]
and \(\mathcal H_H\) is the reduced Hessian from
Theorem \ref{thm:graph-blowup-acsv} at \(r_1=\cdots=r_p=R\).
\end{corollary}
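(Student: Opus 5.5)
The plan is to specialize Theorem \ref{thm:graph-blowup-acsv} to the balanced direction \(\alpha_i=1/p\) and the symmetric point \(r_1=\cdots=r_p=R\), and then simplify every factor in its conclusion. First I will check that this point satisfies the hypotheses. With \(y_i=R\) for all \(i\), each factor \(e^{-y_i}-1\) equals \(e^{-R}-1=(1-\tau)-1=-\tau\). Hence
\[
  F_H(R,\ldots,R)=\sum_{I\in\mathrm{Ind}(H)}(-\tau)^{|I|}=I_H(-\tau)=0 .
\]

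For the partial derivatives, \(\partial_{y_i}(e^{-y_i}-1)=-e^{-y_i}=-(1-\tau)\). This gives
\[
  F_{H,i}(R,\ldots,R)=-(1-\tau)\sum_{I\ni i}(-\tau)^{|I|-1}.
\]
Vertex-transitivity makes the inner sum independent of \(i\). Summing over \(i\) counts each \(I\) exactly \(|I|\) times, so
\[
  \sum_{I\ni i}(-\tau)^{|I|-1}=\frac{1}{p}I_H'(-\tau),
  \qquad
  F_{H,i}=-\frac{(1-\tau)I_H'(-\tau)}{p}\quad\text{for every } i .
\]
The critical equations \(\alpha_i/r_i=\alpha_pF_{H,i}/(r_pF_{H,p})\) then hold trivially: both sides equal \(1/(pR)\). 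The requirement \(F_{H,p}\neq0\) is the same as \(I_H'(-\tau)\neq0\). I will take this as part of the nondegeneracy assumption, or note that it follows from it. Finally,
\[
  A_H=-r_pF_{H,p}(r)=\frac{R(1-\tau)I_H'(-\tau)}{p},
\]
which matches the stated formula.

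Next I will simplify the remaining factors. Since all \(r_i=R\), the rounding factor is \(\prod_i r_i^{-\delta_i}=R^{-\sum\delta_i}=1\), and \(\prod_{i<p}r_i=R^{p-1}\). The exponent is
\[
  \Phi_H(R,\ldots,R)=-\sum_i\tfrac1p\log R=-\log R,
  \qquad\text{so}\qquad
  \exp(N\Phi_H)=R^{-N}.
\]
Also \(\lambda_p^{s-1}\sim(N/p)^{s-1}\), because \(\lambda_p=N/p+O(1)\) and \(s\) is fixed. The hypotheses \(\lambda_i=N/p+O(1)\) and \(\sum_i\lambda_i=N\) are exactly the form \(\lambda_i=\alpha_iN+\delta_i\) with \(\delta_i=O(1)\) and \(\sum\delta_i=0\). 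Substituting all of this into the conclusion of Theorem \ref{thm:graph-blowup-acsv} gives the displayed formula, with \(\mathcal H_H\) left as the reduced Hessian at the symmetric point.

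The only step needing care is branch bookkeeping for \(A_H^{-s}\) when \(A_H\) might be negative. At a minimal positive zero the sign of \(I_H'(-\tau)\) should make \(A_H>0\). The reason is that \(F_H\) is positive near the origin along the positive diagonal, and \(\tau\) is the first zero encountered along that diagonal. The intended reading is that \(\tau\) is the root giving the strictly minimal point, so I will use this to fix \(A_H>0\) and the principal branch. Otherwise I will appeal to the same branch convention as the theorem. Strict minimality and nondegeneracy are assumed in the statement, so nothing else needs verifying.
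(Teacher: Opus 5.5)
Your proposal is correct and follows the paper's proof essentially step for step. You evaluate \(F_H\) at the symmetric point to get \(I_H(-\tau)=0\), use vertex-transitivity together with \(\sum_i\partial_{z_i}B_H(z,\ldots,z)=I_H'(z)\) to obtain \(F_{H,i}=-(1-\tau)I_H'(-\tau)/p\) and hence \(A_H\), note that the critical equations hold trivially, and reduce the rounding factor, \(\prod_{i<p}r_i\), \(\exp(N\Phi_H)=R^{-N}\) and \(\lambda_p^{s-1}\sim(N/p)^{s-1}\). Your extra remarks are sound points that the paper leaves implicit: \(F_{H,p}\neq0\) properly belongs to the ``smooth point'' hypothesis, which it is equivalent to since all partials are equal, and minimality forces \(\tau\) to be the smallest root along the diagonal, so \(I_H'(-\tau)>0\) and \(A_H>0\) with the principal branch.
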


\begin{proof}
At the symmetric point \(y_i=R\), one has \(e^{-R}-1=-\tau\), so
\[
  F_H(R,\ldots,R)=I_H(-\tau)=0.
\]
Vertex transitivity makes all first partial derivatives equal.  Since
\[
  \sum_{i=1}^p \frac{\partial B_H}{\partial z_i}(z,\ldots,z)
  =I_H'(z),
\]
each \(\partial B_H/\partial z_i\) equals \(I_H'(z)/p\) at a symmetric
point.  With \(z_i=e^{-y_i}-1\), this gives
\[
  F_{H,i}(R,\ldots,R)=-(1-\tau)I_H'(-\tau)/p,
\]
and hence \(A_H=-RF_{H,p}=R(1-\tau)I_H'(-\tau)/p\).  The balanced critical
equations in Theorem \ref{thm:graph-blowup-acsv} are therefore satisfied.
All \(r_i\)'s are equal, so the rounding factor
\(\prod_i r_i^{-\delta_i}\) is \(R^{-\sum_i\delta_i}=1\).  Substituting
\(r_i=R\) and \(\alpha_i=1/p\) in Theorem
\ref{thm:graph-blowup-acsv} gives the displayed formula.
\end{proof}

\begin{theorem}[complete multipartite fixed bases by reduction]
\label{thm:equal-class-base}
Let \(q\geq2\) and \(a\geq1\) be fixed, and let \(H\) be the complete
\(q\)-partite graph whose \(q\) vertex classes each have size \(a\).  Replace
every vertex of \(H\) by an independent set of size \(M\).  If
\[
  N=aqM,\qquad L_q=\log\frac{q}{q-1},
\]
then
\[
  \AO(H[\underbrace{M,\ldots,M}_{aq}])
  \sim
  \frac{N!}{(q-1)(1-L_q)^{(q-1)/2}q^N L_q^{N+1}}.
\]
More generally, if the total size of the blown-up bags over each vertex
class of \(H\) is \(N/q+O(1)\), the same asymptotic holds.
\end{theorem}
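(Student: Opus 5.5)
The reduction rests on the observation that \(H[M,\ldots,M]\) is itself a complete multipartite graph. Each vertex class of \(H\) is an independent set of \(a\) vertices, fully joined to every other class. Blowing up its \(a\) vertices by independent sets of size \(M\) therefore produces an independent set of size \(aM\). This set is fully joined to the blown-up sets of every other class. Hence
\[
  H[\underbrace{M,\ldots,M}_{aq}]\cong K_{aM,\ldots,aM}=T(N,q),
\]
the balanced Turán graph with \(q\) parts of size \(N/q\). In the general statement, the bags lying over one vertex class of \(H\) merge into a single part of size \(N/q+O(1)\), so the blown-up graph is \(K_{\mu_1,\ldots,\mu_q}\) with \(\mu_j=N/q+O(1)\) and \(\sum_j\mu_j=N\). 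The first step is to record this isomorphism. It holds because, among vertices in different bags over the same class, adjacency in \(H[\lambda]\) is inherited from \(H\), where no edges exist. Consequently, only the \(q\) part sizes matter, and \(a\) plays no further role.

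The second step is to invoke the fixed-number-of-parts result, Theorem~\ref{thm:a267383}, with \(p=q\). That theorem is the balanced A267383 formula proved in Section~4 with direct strict-minimality verification. Its form, and the convention for near-balanced parts of size \(N/q+O(1)\) summing to \(N\), is taken from Section~4 as stated there. In our normalization, Kotesovec's formula should read
\[
  \AO(T(N,q))\sim\frac{N!}{(q-1)(1-L_q)^{(q-1)/2}q^NL_q^{N+1}}.
\]
As a sanity check, the symmetric point of \(F(y)=\sum_j e^{-y_j}-(q-1)\) is \(e^{-R}=(q-1)/q\), that is, \(R=L_q\). This gives the exponential factor \(R^{-N}\), and the factor \(q^{-N}\) comes from \(\prod\mu_j!\) against \(N!\) via the multinomial. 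The constants can also be checked by Corollary~\ref{cor:vertex-transitive-blowup} with \(H=K_q\). There \(I_{K_q}(z)=1+qz\), so \(\tau=1/q\), \(R=L_q\), and \(A=R(1-1/q)\), with \(s=1\). Combining this with Stirling's formula for \(\prod\mu_j!/N!\), and computing the reduced Hessian of \(\Phi\) at the symmetric point (a multiple of \(I+\mathbf 1\mathbf 1^{T}\), with determinant \(q\) times a power of \(q(1-L_q)/L_q\) or similar), should reproduce the displayed constant. The rounding factor is \(\prod r_j^{-\delta_j}=1\) because all \(r_j\) are equal.

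The expected obstacle is making sure the balanced theorem really covers parts of size \(N/q+O(1)\), not merely the exactly balanced Turán sizes. If Theorem~\ref{thm:a267383} is stated only for \(T(N,q)\), the plan is to use the strict minimality it verifies at the symmetric point. That verification is direction-independent at \(\alpha=(1/q,\ldots,1/q)\), so Theorem~\ref{thm:graph-blowup-acsv} (with \(H=K_q\), \(s=1\), \(\delta_j=O(1)\)) applies unconditionally. The rounding factor is again \(1\), and the ratio \(\prod\mu_j!/\prod (N/q)!\) is absorbed consistently on both sides, giving the same asymptotic.
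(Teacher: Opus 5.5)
Your proposal is correct and is the paper's argument: $H[\lambda]$ is the complete $q$-partite graph whose parts are the class sums $\Lambda_j=N/q+O(1)$, and Theorem~\ref{thm:a267383} with $p=q$ finishes the proof, since its proof is already uniform for parts of size $N/p+O(1)$ (the rounding factor is $1$ because all $r_i=L$, and $\prod_i\lambda_i!$ is unchanged to first order). Your fallback through Theorem~\ref{thm:graph-blowup-acsv} is therefore not needed; also, in your optional sanity check the reduced Hessian determinant is $q\bigl((1-L_q)/(qL_q^2)\bigr)^{q-1}$, not a power of $q(1-L_q)/L_q$.
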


\begin{proof}
The graph \(H[\lambda]\) has no edges between blown-up bags belonging to the
same vertex class of \(H\), and has all possible edges between bags belonging
to different vertex classes.  Thus it is exactly a complete \(q\)-partite
graph whose part sizes are the \(q\) class sums
\[
  \Lambda_j=\sum_{i\in V_j(H)}\lambda_i .
\]
When all \(\lambda_i=M\), these sums are all \(aM=N/q\).  More generally the
hypothesis gives \(\Lambda_j=N/q+O(1)\).  The result is therefore precisely
Theorem \ref{thm:a267383} applied with \(p=q\).  This proves strict
minimality and nondegeneracy as part of the balanced complete-base
calculation, while the base graph \(H\) is non-complete as soon as \(a>1\).
\end{proof}

\paragraph{Example: balanced blow-ups of \(C_5\).}
For the cycle \(H=C_5\), the independence polynomial is
\[
  I_{C_5}(z)=1+5z+5z^2.
\]
The smaller positive solution of \(I_{C_5}(-\tau)=0\) is
\[
  \tau=\frac{5-\sqrt5}{10},\qquad
  R=-\log(1-\tau).
\]
At this stage Corollary \ref{cor:vertex-transitive-blowup} gives the balanced
asymptotic for the blow-ups \(C_5[\lambda]\), conditional on strict
minimality and nondegeneracy, with
\[
  A_{C_5}=R(1-\tau)(1-2\tau)=\frac{R(1-\tau)}{\sqrt5}.
\]
This worked specialization illustrates that the fixed-base theorem is not
limited to complete multipartite graphs: the saddle location is obtained
from a one-variable independence-polynomial equation.  Proposition
\ref{prop:c5-nondegenerate} below verifies nondegeneracy, so that only strict
minimality remains.

\begin{proposition}[local nondegeneracy for the \(C_5\) balanced saddle]
\label{prop:c5-nondegenerate}
For \(H=C_5\), the balanced saddle described above is smooth and
nondegenerate.  Thus the \(C_5\) specialization of Corollary
\ref{cor:vertex-transitive-blowup} is conditional only on strict minimality.
\end{proposition}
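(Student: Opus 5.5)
The plan is to verify the two local conditions of Theorem \ref{thm:graph-blowup-acsv} directly at the symmetric point \(r=(R,\ldots,R)\): first smoothness (\(F_{H,5}(r)\neq0\)), then nonsingularity of the reduced Hessian \(\mathcal H_H\). Throughout I write \(z_i=e^{-y_i}-1\), so that \(F_{C_5}(y)=B_{C_5}(z)\), with
\[
  B_{C_5}(z)=1+\sum_{i}z_i+\sum_{i}z_iz_{i+2}
\]
(indices mod \(5\); the quadratic terms run over the five nonadjacent pairs). At \(r\) every \(z_i=-\tau\), and \(1-5\tau+5\tau^2=0\) gives \(1-2\tau=1/\sqrt5\).

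\emph{Smoothness.} We have \(\partial B/\partial z_i=1+z_{i+2}+z_{i-2}\), which equals \(1-2\tau=1/\sqrt5\) at \(r\). Since \(dz_i/dy_i=-e^{-y_i}=-(1-\tau)\), this gives
\[
  F_{H,i}(r)=-(1-\tau)/\sqrt5\neq0 .
\]
This is consistent with Corollary \ref{cor:vertex-transitive-blowup}, because \(I_{C_5}'(-\tau)/5=(5-10\tau)/5\). The gradient is therefore nonzero and proportional to \((1,\ldots,1)\), so the point is smooth and the implicit function \(h\) exists.

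\emph{Nondegeneracy.} Rather than differentiating \(h\) twice, I would use the standard equivalence between the Hessian of the reduced phase \(\Phi_H\) and the Lagrangian Hessian restricted to the tangent space. Put \(L(y)=-\tfrac15\sum_i\log y_i-\mu F_H(y)\), where \(\mu\) is fixed by \(\nabla L(r)=0\), that is \(\tfrac1{5R}=-\mu F_{H,i}(r)\), so \(\mu=-\sqrt5/(5R(1-\tau))\). The Hessian \(\mathcal H_H\) is congruent, via the Jacobian of \(y'\mapsto(y',h(y'))\), to \(\nabla^2L(r)\) restricted to the tangent space \(T=\{v:\sum_iF_{H,i}v_i=0\}=\{\sum_iv_i=0\}\). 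Hence it suffices to show that \(\nabla^2L(r)|_T\) is nonsingular, and in fact positive definite. Next compute \(\nabla^2F_H(r)\), which is circulant by the \(C_5\) symmetry:
\begin{itemize}
\item the diagonal entries are \(\partial_zB\cdot e^{-2y}\cdot0+\partial_zB\cdot e^{-y}=(1-\tau)/\sqrt5\), because \(\partial^2_{z_i}B=0\);
\item the entries for distance-two pairs are \((1-\tau)^2\);
\item the entries for adjacent pairs are \(0\).
\end{itemize}
The space \(T\) is spanned by the nontrivial Fourier modes \(k=1,2,3,4\) of \(\mathbb Z/5\). On these modes \(\nabla^2F_H(r)\) has eigenvalues \((1-\tau)/\sqrt5+2(1-\tau)^2\cos(4\pi k/5)\), and \(\nabla^2L(r)\) has eigenvalues
\[
  \frac1{5R^2}-\mu\Big(\frac{1-\tau}{\sqrt5}+2(1-\tau)^2\cos\frac{4\pi k}{5}\Big).
\]

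The main obstacle is the sign check for the mode pair \(k=1,4\), where \(\cos(4\pi/5)<0\) and \(-\mu>0\). There the eigenvalue is the difference \(1/(5R^2)-|\mu|\cdot0.5236\ldots\), and it must be shown to be nonzero. With \(\tau\approx0.2764\), \(R\approx0.3234\) and \(\mu\approx-1.911\), I expect the eigenvalues to be about \(1.912-1.000\approx0.91\) for \(k=1,4\) and about \(1.912+1.237\approx3.15\) for \(k=2,3\). All four are positive, so the form is positive definite on \(T\) and \(\det\mathcal H_H\neq0\). To make this rigorous rather than numerical, I would compute with exact values: \(1-\tau=(5+\sqrt5)/10\), \(\cos(4\pi/5)=-(1+\sqrt5)/4\), and \(R=\log\big((5+\sqrt5)/10\big)^{-1}\). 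The remaining transcendental inequality \(1/(5R^2)>|\mu|\,(\ldots)\) is equivalent to an explicit bound of the form \(R<c\) for an algebraic number \(c\). I would verify that bound with a crude rational enclosure of the logarithm, for instance two terms of the series for \(-\log(1-\tau)\) with a tail bound. This settles nondegeneracy, so only strict minimality remains as a hypothesis.
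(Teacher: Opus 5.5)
Your route differs from the paper's and is sound in structure, but a sign error makes the eigenvalue computation wrong as written. Your stationarity condition $1/(5R)=-\mu F_{H,i}(r)$, together with $F_{H,i}(r)=-(1-\tau)/\sqrt5<0$, forces $\mu=+\sqrt5/(5R(1-\tau))>0$. With your negative $\mu$ one gets $\partial_iL(r)=-2/(5R)\neq0$.

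The positive sign is also what the reduction itself produces. Differentiating $F_H(y',h(y'))=0$ twice gives $\mathcal H_H=D^{T}\bigl(\nabla^2\psi-\mu\nabla^2F_H\bigr)D$, where $\psi=-\frac15\sum_i\log y_i$, $D$ is the Jacobian of $y'\mapsto(y',h(y'))$, and $\mu=\psi_5/F_{H,5}$.

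Now use $\frac1{\sqrt5}+2(1-\tau)\cos\frac{4\pi}5=-(1-\tau)$ and $\frac1{\sqrt5}+2(1-\tau)\cos\frac{2\pi}5=\frac2{\sqrt5}$. The Hessian of $F_H$ then has eigenvalue $-(1-\tau)^2$ on $k=1,4$ and $2(1-\tau)/\sqrt5$ on $k=2,3$. So the correct eigenvalues of $\nabla^2L(r)|_T$ are
\[
  \frac{1+\frac{1+\sqrt5}{2}R}{5R^2}\quad(k=1,4),
  \qquad
  \frac{1-2R}{5R^2}\quad(k=2,3),
\]
about $2.91$ and $0.67$, not $0.91$ and $3.15$.

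Thus the modes $k=1,4$ are automatically positive. The one inequality that actually needs proof is $R<1/2$ for $k=2,3$, and your plan never checks it. It is elementary: $e^{-1/2}<2/3<(5+\sqrt5)/10=e^{-R}$, the last step because $3\sqrt5>5$. No series enclosure is needed. Your final conclusion is correct, but only because both the wrong and the right eigenvalues happen to be positive.

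With the sign fixed, your argument is a genuinely different and cleaner proof than the paper's. The paper eliminates $y_5$, writes the $4\times4$ matrix $K=(h_{ij})$ explicitly, forms $5R^2\mathcal H=I_4+J_4-RK$, and applies Sylvester's criterion. That requires sign estimates for four algebraic leading minors on $0<R<1/3$, including the quartic bound $Q(R)<0$.

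Your tangent-space congruence plus $\mathbb Z/5$ Fourier diagonalization reduces everything to two scalar eigenvalues. It also gives the determinant in closed form: since $\det(D^TD)=\det(I_4+J_4)=5$,
\[
  \det(5R^2\mathcal H_H)=5\Bigl(1+\tfrac{1+\sqrt5}{2}R\Bigr)^2(1-2R)^2 .
\]
This is exactly the paper's $D_4$, since $2Q(R)$ equals $\bigl(1+\tfrac{1+\sqrt5}{2}R\bigr)^2$ times its denominator, and it explains the factor $(2R-1)^2$ there.

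Your method extends directly to any circulant base graph and yields the constant $\det\mathcal H_H$ needed in Corollary~\ref{cor:vertex-transitive-blowup}. The paper's route, in exchange, needs no Lagrangian congruence lemma and gives the reduced Hessian entrywise.
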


\begin{proof}
Write \(a=1-\tau=(5+\sqrt5)/10\) and \(b=1-2\tau=1/\sqrt5\).  At the
symmetric point \(y_i=R\), \(z_i=e^{-y_i}-1=-\tau\).  For clarity, in this
paragraph
\[
  F(y)=B_{C_5}(-y_1,\ldots,-y_5)
      =1+\sum_i z_i+\sum_i z_i z_{i+2},
  \qquad z_i=e^{-y_i}-1,
\]
where indices are read modulo \(5\).  Thus, at the symmetric point,
\[
  F_i=-e^{-R}\left(1+z_{i-2}+z_{i+2}\right)
      =-(1-\tau)(1-2\tau)=-ab\neq0.
\]
Hence the point is smooth.  Eliminate \(y_5\) and write
\[
  F(y_1,\ldots,y_4,h(y_1,\ldots,y_4))=0.
\]
Implicit differentiation gives \(h_i=-1\).  A second differentiation gives
\[
  h_{ij}=
  -\frac{F_{ij}+F_{i5}h_j+F_{j5}h_i+F_{55}h_ih_j}{F_5}.
\]
Using
\[
  F_{ii}=ab,\qquad
  F_{ij}=
  \begin{cases}
    a^2, & i\not\equiv j,\ i\not\sim j \pmod 5,\\
    0, & i\sim j \pmod 5,
  \end{cases}
\]
one obtains, for \(1\leq i,j\leq4\),
\[
K=(h_{ij})=
\begin{pmatrix}
2 & (1-\sqrt5)/2 & 1 & (3+\sqrt5)/2\\
(1-\sqrt5)/2 & 1-\sqrt5 & -\sqrt5 & 1\\
1 & -\sqrt5 & 1-\sqrt5 & (1-\sqrt5)/2\\
(3+\sqrt5)/2 & 1 & (1-\sqrt5)/2 & 2
\end{pmatrix}.
\]
The reduced Hessian of
\[
  \Phi=-\frac15\sum_{i=1}^4\log y_i-\frac15\log h
\]
therefore satisfies
\[
  5R^2\mathcal H=I_4+J_4-RK,
\]
where \(J_4\) is the all-ones matrix.  Let \(M=I_4+J_4-RK\).  The leading
principal minors of \(M\) are
\[
\begin{aligned}
D_1&=2(1-R),\\
D_2&=\frac{6+(2\sqrt5-10)R+(1-3\sqrt5)R^2}{2},\\
D_3&=(1-2R)\{4+(3\sqrt5-1)R+(1-\sqrt5)R^2\},
\end{aligned}
\]
and
\[
  D_4=
  \frac{10(2R-1)^2Q(R)}
       {(3\sqrt5-1)R^2+(10-2\sqrt5)R-6},
\]
where
\[
\begin{aligned}
Q(R)=&(3+2\sqrt5)R^4+(12+2\sqrt5)R^3
      +(-5+4\sqrt5)R^2\\
&\quad +(2-4\sqrt5)R-3 .
\end{aligned}
\]
The elementary interval \(0.323<R<0.324<1/3\) follows from
\(R=-\log((5+\sqrt5)/10)\).  For \(0<R<1/3\) and \(2<\sqrt5<3\), the first
three displayed minors are positive.  The denominator in \(D_4\) is negative,
since it is at most \(8R^2+6R-6<0\), and \(Q(R)<0\), since
\[
  Q(R)<9R^4+18R^3+7R^2-3<0
  \qquad(0<R<1/3).
\]
Thus \(D_4>0\) as well.  Sylvester's criterion shows that \(M\), hence
\(\mathcal H\), is positive definite.
\end{proof}

For \(H=K_p\), \(\mathrm{Ind}(H)\) consists only of the empty set and the
singletons, so
\[
  B_{K_p}(x)=1+\sum_i(e^{x_i}-1)
  =
  e^{x_1}+\cdots+e^{x_p}-(p-1).
\]
Then \(F_{K_p}(y)=\sum_i e^{-y_i}-(p-1)\) and
\(A_H=r_pe^{-r_p}\), so Theorem \ref{thm:graph-blowup-acsv} specializes to
the fixed-proportion theorem below.

\begin{lemma}[complete-base critical-point parametrization]
\label{lem:complete-base-critical-param}
Let \(p\geq2\), let \(\alpha_i>0\) with \(\sum_i\alpha_i=1\), and put
\[
  g(z)=-z\log z,\qquad 0<z<1.
\]
Positive solutions \(r\in(0,\infty)^p\) of the complete-base critical
system
\[
  \sum_{i=1}^p e^{-r_i}=p-1,\qquad
  \frac{\alpha_1}{r_1e^{-r_1}}
  =
  \cdots
  =
  \frac{\alpha_p}{r_pe^{-r_p}}
\]
are in one-to-one correspondence with pairs \((\kappa,z)\), where
\(\kappa>0\), \(z_i\in(0,1)\),
\[
  \sum_{i=1}^p z_i=p-1,\qquad
  g(z_i)=\kappa\alpha_i\quad(1\leq i\leq p),
\]
by the change of variables \(z_i=e^{-r_i}\).
Moreover \(g\) is strictly increasing on \((0,e^{-1})\) and strictly
decreasing on \((e^{-1},1)\).
\end{lemma}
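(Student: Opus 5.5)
The plan is to verify the correspondence directly through the substitution \(z_i=e^{-r_i}\), which is a bijection from \((0,\infty)\) onto \((0,1)\) with inverse \(r_i=-\log z_i\). Under this substitution the constraint \(\sum_i e^{-r_i}=p-1\) becomes \(\sum_i z_i=p-1\). For the second constraint, \(r_ie^{-r_i}=(-\log z_i)z_i=g(z_i)\), which is strictly positive because \(0<z_i<1\). The ratio condition therefore reads \(\alpha_i/g(z_i)=c\) for a common value \(c>0\) (positive since \(\alpha_i>0\) and \(g(z_i)>0\)). Setting \(\kappa=1/c\), this is equivalent to \(g(z_i)=\kappa\alpha_i\) for all \(i\).

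Next I would check that the map is a genuine bijection onto pairs \((\kappa,z)\). Given a solution \(r\), the vector \(z\) is determined by \(r\). The scalar \(\kappa\) is then determined uniquely, for example as \(\kappa=g(z_1)/\alpha_1\), and it is positive. Conversely, given \((\kappa,z)\) satisfying the stated system, put \(r_i=-\log z_i>0\). All the ratios \(\alpha_i/(r_ie^{-r_i})=1/\kappa\) are equal and \(\sum_i e^{-r_i}=p-1\), so \(r\) solves the critical system. The two assignments are mutually inverse. The only thing to note is that \(\kappa\) is not an extra free parameter: it is forced by \(z\), and the consistency condition is exactly that all the \(g(z_i)/\alpha_i\) agree.

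For the monotonicity claim, I would differentiate: \(g'(z)=-\log z-1\). This is positive precisely when \(\log z<-1\), that is when \(z<e^{-1}\), and negative when \(z>e^{-1}\). Hence \(g\) is strictly increasing on \((0,e^{-1})\) and strictly decreasing on \((e^{-1},1)\), with maximum value \(g(e^{-1})=e^{-1}\).

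There is no real obstacle here. The only point needing care is positivity, which guarantees the ratios are well defined and \(\kappa>0\); this holds because \(g>0\) on \((0,1)\). I would remark that the unimodality of \(g\) is recorded because each equation \(g(z_i)=\kappa\alpha_i\) then has at most two roots, one on each side of \(e^{-1}\). This is presumably what later arguments use to count critical points.
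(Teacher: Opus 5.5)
Your proposal is correct and follows the paper's proof essentially step for step: the substitution \(z_i=e^{-r_i}\), the identity \(r_ie^{-r_i}=g(z_i)\), naming the common reciprocal ratio \(\kappa\), and reading off unimodality from \(g'(z)=-\log z-1\). Your explicit check that \(\kappa\) is positive and forced by \(z\) is a small added detail that the paper covers by calling the converse ``the same calculation in reverse.''
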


\begin{proof}
With \(z_i=e^{-r_i}\), the singular equation becomes
\(\sum_i z_i=p-1\), and \(r_ie^{-r_i}=(-\log z_i)z_i=g(z_i)\).  The
critical equalities therefore say that
\(\alpha_i/g(z_i)\) is independent of \(i\).  Writing this common reciprocal
as \(\kappa\) gives \(g(z_i)=\kappa\alpha_i\).  The converse is the same
calculation in reverse.  Finally
\[
  g'(z)=-\log z-1,
\]
so \(g'(z)>0\) for \(0<z<e^{-1}\), \(g'(e^{-1})=0\), and
\(g'(z)<0\) for \(e^{-1}<z<1\).
\end{proof}

\begin{remark}
Lemma \ref{lem:complete-base-critical-param} is useful as a diagnostic for
the conditional fixed-proportion theorem below.  Away from the balanced
point, the equation \(g(z_i)=\kappa\alpha_i\) may force different coordinates
onto different branches of the unimodal function \(g\).  Thus a fully
unconditional fixed-proportion theorem would need, in addition to existence
of a positive branch choice satisfying \(\sum_i z_i=p-1\), a proof that this
smooth point is the unique strictly minimal complex singularity in the
coefficient direction.  The balanced case avoids this branch issue because
all \(z_i\)'s are equal.
\end{remark}

\section{Fixed number of parts}

\begin{theorem}[OEIS A267383]\label{thm:a267383}
Fix \(p\geq2\), and let \(A(N,p)=\AO(T(N,p))\).  Put
\[
  L=\log\frac{p}{p-1}.
\]
Then, as \(N\to\infty\),
\[
  A(N,p)
  \sim
  \frac{N!}{(p-1)(1-L)^{(p-1)/2}p^N L^{N+1}}.
\]
\end{theorem}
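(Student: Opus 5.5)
The plan is to specialize the blow-up machinery to the complete base \(H=K_p\) with \(s=1\) and balanced direction \(\alpha_i=1/p\), and to supply by hand the strict-minimality hypothesis that Theorem \ref{thm:graph-blowup-acsv} leaves open. By Proposition \ref{prop:blowup-egf} at \(q=-1\) (equivalently (1)) and Stanley's theorem,
\[
  \frac{\AO(K_{\lambda_1,\ldots,\lambda_p})}{\prod_i\lambda_i!}
  =[y^\lambda]\,F(y)^{-1},\qquad
  F(y)=\sum_{i=1}^p e^{-y_i}-(p-1).
\]
So the generating function is a simple pole along the smooth variety \(F=0\). For the Turán graph \(T(N,p)\) the parts are \(\lambda_i=N/p+\delta_i\) with \(\delta_i\in\{0,\pm\}\) bounded and \(\sum\delta_i=0\).

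The critical point is the symmetric one, \(r_i=R\) with \(e^{-R}=(p-1)/p\). This gives \(R=L\), and the balanced critical equations hold by symmetry, as in Corollary \ref{cor:vertex-transitive-blowup} with \(I_{K_p}(z)=1+pz\) and \(\tau=1/p\). The rounding factor is \(1\), and \(A_H=Le^{-L}=L(p-1)/p\). Smoothness is clear since \(F_p=-e^{-L}\neq0\).

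The main obstacle is strict minimality: \(r\) must be the only point of \(\{F=0\}\) on the closed polytorus \(|y_i|=L\), and no other singularity may lie in the closed polydisc. I would prove this directly. If \(|y_i|\le L\) for all \(i\), then
\[
  |e^{-y_i}-1|\le\sum_{k\ge1}\frac{|y_i|^k}{k!}\le e^{L}-1=\frac1{p-1}.
\]
At a zero of \(F\) we have \(\sum_i(e^{-y_i}-1)=-1\). The triangle inequality then gives \(\sum_i|e^{-y_i}-1|\le p/(p-1)\), which is not immediately sharp, so I would instead work with \(w_i=1-e^{-y_i}\), which satisfy \(\sum_i w_i=1\) and \(|w_i|\le 1-e^{-L}=1/p\). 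The bound on \(|w_i|\) holds because \(1-e^{-y}\) has a power series with alternating signs, and on \(|y|\le L\) its modulus is maximized at \(y=L\) by comparison with the series \(e^{L}-1\) after rescaling. More carefully, I would bound \(|1-e^{-y}|\le 1-e^{-|y|}\) via the integral \(\int_0^1 ye^{-ty}dt\) together with \(|e^{-ty}|\le e^{t|y|}\); if that fails, I would use \(\mathrm{Re}\) estimates instead. With \(\sum w_i=1\) and \(|w_i|\le1/p\), equality forces every \(w_i=1/p\), and hence \(y_i=L+2\pi i k_i\); only \(k_i=0\) lies in the polydisc. So the minimal point is unique and strictly minimal. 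A simple pole on a smooth strictly minimal point with a nondegenerate Hessian then gives the standard ACSV formula (\cite{pemantlewilsonmelczer2024}), with uniformity in the bounded \(\delta\).

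It remains to compute the Hessian. Solving \(h=-\log\bigl(p-1-\sum_{i<p}e^{-y_i}\bigr)\) and differentiating at the symmetric point gives \(h_i=-1\) and \(h_{ij}=-(1+\delta_{ij}(p-1))\,(\ldots)\), up to a scalar factor. The Hessian of \(\Phi=-\frac1p\sum\log y_i-\frac1p\log h\) is then of the form \(c_1I+c_2J\), whose determinant is explicit; I expect \(\det\mathcal H=p^{-(p-1)}\ldots(1-L)^{p-1}L^{-2(p-1)}\) up to constants. Substituting into the formula of Theorem \ref{thm:graph-blowup-acsv} and converting \(\prod_i\lambda_i!\,p^{-N}\cdots\) into \(N!\) by Stirling for the multinomial \(N!/\prod\lambda_i!\sim p^N p^{p/2}(2\pi N)^{-(p-1)/2}\) cancels the \((2\pi N)^{(p-1)/2}\) factors. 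The remaining constants should collapse to \(1/\bigl((p-1)(1-L)^{(p-1)/2}L^{N+1}p^N\bigr)\), and I would check this collapse against \(p=2\), where the formula must agree with the known poly-Bernoulli asymptotic \(N!/(2\sqrt{1-\log2}\,(2\log2)^{N+1}\cdots)\).
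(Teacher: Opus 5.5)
You follow the paper's route: simple pole of $F(y)^{-1}$, symmetric smooth critical point $y_i=L$, smooth-point ACSV, then Stirling for the multinomial. But the step you single out as the main obstacle, strict minimality, fails as written. The inequality $|1-e^{-y}|\le 1-e^{-|y|}$ is false on $|y|\le L$. At $y=-L$, which lies on the torus $|y|=L$ itself, $|1-e^{-y}|=e^{L}-1=\frac1{p-1}>\frac1p$. Comparing $1-e^{-y}=\sum_{k\ge1}(-1)^{k+1}y^k/k!$ termwise with $e^{|y|}-1$ shows that the modulus is maximized at $y=-L$, not at $y=L$. Likewise, the integral $\int_0^1 y e^{-ty}\,dt$ with $|e^{-ty}|\le e^{t|y|}$ only returns $|1-e^{-y}|\le e^{|y|}-1=\frac1{p-1}$, the same non-sharp bound you started from. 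Since $p\cdot\frac1{p-1}>1$, modulus bounds on the $w_i$ cannot make $\sum_i w_i=1$ rigid, so this line of argument cannot isolate the minimal point.

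What is needed is a real-part estimate. For $|z|\le L$ one has $\Re e^{z}\ge e^{-L}$, with equality only at $z=-L$. Equivalently, $\Re w_i\le 1-e^{-L}=\frac1p$ on the closed polydisc, with equality only at $y_i=L$. Then $\sum_i w_i=1$ forces $\Re w_i=\frac1p$ for every $i$, hence $y=(L,\ldots,L)$, which excludes every other zero of $F$ in the closed polydisc.

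This is exactly the paper's argument. Write $z=u+iv$. For fixed $v$ the function $e^u\cos v$ increases in $u$ (since $\cos v>0$), so it suffices to check the left arc $u=-\sqrt{L^2-v^2}$. There $\log\cos v-\sqrt{L^2-v^2}+L$ vanishes at $v=0$ and has derivative $-\tan v+v/\sqrt{L^2-v^2}>0$ for $0<v<L$. This uses $L\le\log 2<1$, and the inequality is genuinely false for large radii: at $z=i\rho$ with $\rho$ near $\pi/2$, $\Re e^z=\cos\rho$ is near $0$.

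Your fallback ``use $\Re$ estimates instead'' points the right way, but that estimate is the nontrivial content of the step and you have not carried it out.

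When you do the Hessian, note that $h_{ii}=2$ and $h_{ij}=1$ for $i\ne j$, not $1+\delta_{ij}(p-1)$. So $\nabla^2\Phi=\frac{1-L}{pL^2}(I_{p-1}+J_{p-1})$, with determinant $p\bigl(\frac{1-L}{pL^2}\bigr)^{p-1}$. The extra factor $p$ from the all-ones eigenvalue is needed for the $p^{p/2}$ factors to cancel against the multinomial Stirling estimate.
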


\begin{proof}
By (1) and Stanley's theorem, the EGF coefficient to be estimated is
\[
  (-1)^N[x_1^{\lambda_1}\cdots x_p^{\lambda_p}]
  \left(e^{x_1}+\cdots+e^{x_p}-(p-1)\right)^{-1}.
\]
After substituting \(x_i=-y_i\), the coefficient of
\(\prod_i x_i^{\lambda_i}\) is
\((-1)^N\) times the coefficient of \(\prod_i y_i^{\lambda_i}\), because
\(\sum_i\lambda_i=N\).  This cancels the \((-1)^N\) from Stanley's theorem,
so in the balanced regime \(\lambda_i=N/p+O(1)\) we need
\[
  [y_1^{\lambda_1}\cdots y_p^{\lambda_p}]
  \frac{1}{e^{-y_1}+\cdots+e^{-y_p}-(p-1)}.
\]
The resulting coefficient is an exponential-generating-function coefficient;
after the residue-saddle estimate is obtained, it is multiplied by
\(\prod_i\lambda_i!\) to recover the chromatic-polynomial value.
The positive critical point is
\[
  y_1=\cdots=y_p=L,\qquad e^{-L}=\frac{p-1}{p}.
\]
Solving the singular equation
\(\sum_i e^{-y_i}-(p-1)=0\) for \(y_p\) gives
\[
  y_p=h(y_1,\ldots,y_{p-1})
  =
  -\log\!\left((p-1)-\sum_{i=1}^{p-1}e^{-y_i}\right).
\]
Equivalently, \(e^{-h}=(p-1)-\sum_{i<p}e^{-y_i}\).  Differentiating this
implicit equation gives
\[
  -e^{-h}h_i=e^{-y_i},\qquad
  e^{-h}(h_i^2-h_{ii})=-e^{-y_i},\qquad
  e^{-h}(h_ih_j-h_{ij})=0\quad(i\neq j).
\]
At \(y_1=\cdots=y_{p-1}=L\), the denominator is
\((p-1)-(p-1)e^{-L}=e^{-L}\), not zero, and \(h=L\).  Hence
\[
  h_i=-1,\qquad h_{ii}=2,\qquad h_{ij}=1\quad(i\neq j).
\]
Taking the residue in \(y_p\) reduces the Cauchy integral to a
\((p-1)\)-dimensional saddle integral with phase
\[
  \phi(y_1,\ldots,y_{p-1})
  =
  -\frac1p\left(\log y_1+\cdots+\log y_{p-1}+\log h\right).
\]
The Hessian at the saddle is
\[
  \nabla^2\phi(L,\ldots,L)
  =
  \frac{1-L}{pL^2}(I_{p-1}+J_{p-1}),
\]
so
\[
  \det\nabla^2\phi(L,\ldots,L)
  =
  p\left(\frac{1-L}{pL^2}\right)^{p-1}.
\]
Here \(J_{p-1}\) is the all-ones matrix; the determinant follows from the
matrix determinant lemma, since \(I_{p-1}+J_{p-1}\) has eigenvalue \(p\) in
the all-ones direction and eigenvalue \(1\) on its orthogonal complement.
The residue prefactor is \(e^L/L^p=p/((p-1)L^p)\).  The standard smooth
minimal critical-point estimate for a smooth strictly minimal point
\cite[Chs. 7 and 9]{pemantlewilsonmelczer2024} therefore applies.  Let us
spell out the hypotheses used here.  The singular variety is smooth at the
point because
\[
  \frac{\partial}{\partial y_p}
  \left(\sum_i e^{-y_i}-(p-1)\right)=-e^{-L}\neq0 .
\]
The critical equations are exactly the vanishing of the gradient of
\(\phi\), verified above.  Strict minimality is elementary in the balanced
case.  Put \(z_i=-y_i\).  Since \(0<L\leq\log 2\), for every complex \(z\)
with \(|z|\leq L\) one has
\[
  \Re e^z\geq e^{-L},
\]
with equality only at \(z=-L\).  Indeed, writing \(z=u+iv\), the minimum of
\(e^u\cos v\) on \(u^2+v^2\leq L^2\) occurs at \(u=-L,v=0\): for
\(0\leq v\leq L\), the derivative of
\(\log\cos v-\sqrt{L^2-v^2}+L\) is
\(-\tan v+v/\sqrt{L^2-v^2}\geq0\), because \(v/L\geq\sin v\).  Therefore on
the closed polydisc \(|y_i|\leq L\),
\[
  \Re\sum_i e^{-y_i}\geq p e^{-L}=p-1,
\]
and equality, hence a zero of \(\sum_i e^{-y_i}-(p-1)\), occurs only at
\(y_1=\cdots=y_p=L\).  This rules out competing singularities on the
polydisc.  The Hessian \((1-L)(I_{p-1}+J_{p-1})/(pL^2)\) is positive
definite, and compactness of the remaining part of a small admissible contour
gives the required positive gap in \(\Re\phi\).

The transfer estimate is uniform when
\(\lambda_i=N/p+O(1)\).  Such \(O(1)\) deviations only perturb the saddle by
\(O(N^{-1})\) and do not change the leading constant.  Hence
\[
  [y^\lambda]\frac1{\sum_i e^{-y_i}-(p-1)}
  \sim
  \frac{p^{p/2}}
       {(p-1)(2\pi N)^{(p-1)/2}(1-L)^{(p-1)/2}L^{N+1}}.
\]
Multiplying by
\[
  \prod_{i=1}^p \lambda_i!
  =
  \frac{(2\pi N)^{p/2}}{p^{p/2}}\left(\frac{N}{pe}\right)^N(1+o(1)),
\]
where the same formula holds uniformly for
\(\lambda_i=N/p+O(1)\) by Stirling's formula and
\(\sum_i(\lambda_i-N/p)=0\),
and using \(N!\sim\sqrt{2\pi N}(N/e)^N\) proves the formula.
\end{proof}

\begin{theorem}[fixed parts on the Tutte axis]\label{thm:tutte-axis}
Fix \(p\geq2\) and \(s>0\).  Let \(G_N=T(N,p)\), put
\[
  H_s(N,p)=(-1)^N\chi_{G_N}(-s)=s\,T_{G_N}(1+s,0),
\]
and set \(L=\log(p/(p-1))\).  Then
\[
  H_s(N,p)
  \sim
  \frac{N!\,N^{s-1}}
  {\Gamma(s)(p-1)^s(1-L)^{(p-1)/2}p^N L^{N+s}}.
\]
Equivalently,
\[
  T_{G_N}(1+s,0)
  \sim
  \frac{N!\,N^{s-1}}
  {\Gamma(s+1)(p-1)^s(1-L)^{(p-1)/2}p^N L^{N+s}}.
\]
\end{theorem}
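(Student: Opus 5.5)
The plan is to rerun the proof of Theorem \ref{thm:a267383} with the exponent \(-1\) replaced by \(-s\), i.e.\ to specialize Theorem \ref{thm:graph-blowup-acsv} to \(H=K_p\) in the balanced direction. The difference is that the simple pole becomes an algebraic singularity of order \(s\).

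\textbf{Setup.} By (1) at \(q=-s\) and the substitution \(x_i=-y_i\), the sign \((-1)^N\) cancels. Hence
\[
  H_s(N,p)=\Big(\prod_i\lambda_i!\Big)\,[y^\lambda]\,F(y)^{-s},
  \qquad F(y)=\sum_i e^{-y_i}-(p-1).
\]
The Tutte form then follows from Proposition \ref{prop:tutte-integral}, since \(T(N,p)\) is connected for \(N\ge2\), and from \(s\Gamma(s)=\Gamma(s+1)\).

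\textbf{Reused inputs.} The geometric inputs carry over verbatim from the proof of Theorem \ref{thm:a267383}, because they depend only on the zero set of \(F\), not on the exponent:
\begin{itemize}
\item the critical point \(y_i=L\);
\item smoothness, from \(F_p=-e^{-L}\neq0\);
\item the implicit function \(h\) with \(h_i=-1\), \(h_{ii}=2\), \(h_{ij}=1\);
\item the phase Hessian \(\frac{1-L}{pL^2}(I_{p-1}+J_{p-1})\), with determinant \(p\left(\frac{1-L}{pL^2}\right)^{p-1}\);
\item strict minimality on the polydisc \(|y_i|\le L\), via \(\Re e^{z}\ge e^{-L}\) with equality only at \(z=-L\).
\end{itemize}
Because these already verify the strict-minimality and nondegeneracy hypotheses, Theorem \ref{thm:graph-blowup-acsv} applies unconditionally here.

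\textbf{Local step.} The only new ingredient is the one-variable transfer in \(y_p\). Near \(y_p=h(y')\) we write
\[
  F=A(y')\big(1-y_p/h(y')\big)\big(1+O(y_p-h)\big),
\]
where \(A=-hF_p=he^{-h}\), so that at the critical point \(A_H=Le^{-L}=L(p-1)/p\). The standard singularity-analysis estimate
\[
  [y_p^{n}](1-y_p/h)^{-s}\sim \frac{n^{s-1}}{\Gamma(s)}h^{-n}
\]
then gives the local factor \(\lambda_p^{s-1}A_H^{-s}h^{-\lambda_p}/\Gamma(s)\).

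This step is the main obstacle. For non-integer \(s\) the residue argument of Theorem \ref{thm:a267383} must be replaced by a Hankel-contour (Flajolet--Odlyzko) transfer. The transfer has to be uniform in \(y'\) over an \(N^{-1/2}\)-neighbourhood of \((L,\dots,L)\), which requires \(\Delta\)-analyticity of \(F^{-s}\) in \(y_p\). I would get this from strict minimality: the only zero of \(F\) on \(|y_p|=|h(y')|\) near the real point is \(y_p=h(y')\), so the branch of \(F^{-s}\) (principal log near the origin) continues to a slit domain. The remaining \((p-1)\)-dimensional Gaussian integral and the tail bounds are handled exactly as in the pole case, with the extra factor \(\lambda_p^{s-1}\sim (N/p)^{s-1}\) being slowly varying.

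\textbf{Assembly.} Corollary \ref{cor:vertex-transitive-blowup} with \(\tau=1/p\), \(R=L\), \(\det\mathcal H_H=p\left(\frac{1-L}{pL^2}\right)^{p-1}\), and no rounding factor gives
\[
  [y^\lambda]F^{-s}\sim \frac{(N/p)^{s-1}}{\Gamma(s)}\Big(\frac{(p-1)L}{p}\Big)^{-s}\frac{L^{-N-(p-1)}}{(2\pi N)^{(p-1)/2}\,p^{1/2}\big(\frac{1-L}{pL^2}\big)^{(p-1)/2}}.
\]
We multiply by \(\prod\lambda_i!\sim (2\pi N)^{p/2}p^{-p/2}(N/(pe))^N\), use \(N!\sim\sqrt{2\pi N}(N/e)^N\), and collect powers of \(p\) and \(L\). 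This gives
\[
  H_s(N,p)\sim\frac{N!\,N^{s-1}}{\Gamma(s)(p-1)^s(1-L)^{(p-1)/2}p^N L^{N+s}}.
\]
As a check, \(s=1\) recovers Theorem \ref{thm:a267383}.
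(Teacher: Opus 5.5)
Your proposal is correct and is essentially the paper's own proof. Like the paper, you rerun the balanced critical-point computation of Theorem~\ref{thm:a267383} with exponent $-s$, pick up the local factor $\lambda_p^{s-1}A_H^{-s}h^{-\lambda_p}/\Gamma(s)$ with $A_H^{-s}=e^{sL}L^{-s}$, keep the unchanged Hessian, and multiply by $\prod_i\lambda_i!$; your constant bookkeeping checks out. Your Hankel-contour discussion goes beyond what the paper writes, and the uniform $\Delta$-analyticity you need is immediate here: for fixed $y'$ the zeros of $F(y',\cdot)$ are exactly $h(y')+2\pi ik$ ($k\in\mathbb Z$), so $h(y')$ is the only zero in a disc of radius far larger than $|h(y')|\approx L$. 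This is cleaner than appealing to strict minimality, which only controls the closed polydisc, not a disc slightly larger than $|h(y')|$.
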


\begin{proof}
The proof is the same critical-point calculation as in
Theorem \ref{thm:a267383}, with the singular function raised to the power
\(-s\).  The relevant coefficient is
\[
  [y_1^{\lambda_1}\cdots y_p^{\lambda_p}]
  \left(e^{-y_1}+\cdots+e^{-y_p}-(p-1)\right)^{-s},
\]
where \(\lambda_i=N/p+O(1)\).  The critical point remains
\[
  y_1=\cdots=y_p=L.
\]
Solving the singular equation for \(y_p=h(y_1,\ldots,y_{p-1})\), extraction
in the \(y_p\)-variable gives the additional factor
\[
  \frac{(N/p)^{s-1}}{\Gamma(s)}e^{sL}h^{-s}
\]
at the saddle.  The remaining \((p-1)\)-dimensional Hessian is unchanged from
the \(s=1\) case in Theorem \ref{thm:a267383}:
\[
  \det\nabla^2\phi(L,\ldots,L)
  =
  p\left(\frac{1-L}{pL^2}\right)^{p-1}.
\]
Thus
\[
  [y^\lambda]\left(\sum_i e^{-y_i}-(p-1)\right)^{-s}
  \sim
  \frac{e^{sL}p^{p/2-s}N^{s-1-(p-1)/2}}
  {\Gamma(s)(2\pi)^{(p-1)/2}(1-L)^{(p-1)/2}L^{N+s}}.
\]
Multiplying by the balanced factorials
\[
  \prod_i\lambda_i!
  \sim
  N!\,(2\pi N)^{(p-1)/2}p^{-p/2}p^{-N}
\]
and using \(e^L=p/(p-1)\) gives the formula for
\((-1)^N\chi_{G_N}(-s)\).  The Tutte-polynomial statement follows from
\((-1)^N\chi_G(-s)=sT_G(1+s,0)\) for connected \(G\).
\end{proof}

\begin{remark}
Putting \(s=1\) in Theorem \ref{thm:tutte-axis} gives \(N^0/\Gamma(1)=1\)
and \((p-1)^sL^{N+s}=(p-1)L^{N+1}\), so the Tutte-axis theorem reduces
exactly to Theorem \ref{thm:a267383}.
\end{remark}

\begin{theorem}[conditional fixed-proportion asymptotic on the Tutte axis]\label{thm:fixed-proportions}
Fix \(p\geq2\), \(s>0\), and positive numbers
\(\alpha_1,\ldots,\alpha_p\) with \(\sum_i\alpha_i=1\).  Let
\(\lambda_i=\alpha_iN+\delta_i\), where \(\delta_i=O(1)\) and
\(\sum_i\delta_i=0\).  Assume that the system
\[
  \sum_{i=1}^p e^{-r_i}=p-1,
  \qquad
  \frac{\alpha_1}{r_1e^{-r_1}}
  =
  \cdots
  =
  \frac{\alpha_p}{r_pe^{-r_p}} .
\]
has a positive solution \(r=(r_1,\ldots,r_p)\), and use this solution as the
critical point below.
Eliminate the last variable by
\[
  h(y_1,\ldots,y_{p-1})
  =
  -\log\!\left((p-1)-\sum_{i=1}^{p-1}e^{-y_i}\right),
\]
and define
\[
  \Phi(y_1,\ldots,y_{p-1})
  =
  -\sum_{i=1}^{p-1}\alpha_i\log y_i
  -\alpha_p\log h(y_1,\ldots,y_{p-1}).
\]
Let \(\mathcal H\) be the Hessian matrix of \(\Phi\) at
\((r_1,\ldots,r_{p-1})\).  Assume further that this smooth critical point is
strictly minimal for the direction \(\alpha\), in the sense of the
smooth-point transfer theorem of ACSV, and is nondegenerate.  Here
nondegeneracy means that the reduced Hessian \(\mathcal H\) is nonsingular; in
the positive real case considered here, \(\mathcal H\) is positive definite.
Then
\[
\begin{aligned}
  &(-1)^N\chi_{K_{\lambda_1,\ldots,\lambda_p}}(-s)  \\
  &\quad\sim
  \left(\prod_{i=1}^p\lambda_i!\right)
  \left(\prod_{i=1}^p r_i^{-\delta_i}\right)
  \frac{\lambda_p^{s-1}}{\Gamma(s)}
  \frac{e^{sr_p}}{r_p^s\prod_{i=1}^{p-1}r_i}
  \frac{\exp(N\Phi(r_1,\ldots,r_{p-1}))}
       {(2\pi N)^{(p-1)/2}\sqrt{\det\mathcal H}} .
\end{aligned}
\]
For connected graphs this is \(sT_{K_{\lambda_1,\ldots,\lambda_p}}(1+s,0)\).
\end{theorem}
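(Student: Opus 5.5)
The plan is to obtain this as the complete-base specialization \(H=K_p\) of Theorem \ref{thm:graph-blowup-acsv}, checking that every ingredient of that theorem reduces to the displayed data. First, \(\mathrm{Ind}(K_p)\) consists of the empty set and the singletons, so \(B_{K_p}(x)=\sum_i e^{x_i}-(p-1)\) and \(F_{K_p}(y)=\sum_i e^{-y_i}-(p-1)\). By Proposition \ref{prop:blowup-egf}, or directly by (1), the quantity \((-1)^N\chi_{K_\lambda}(-s)\) equals \(\prod_i\lambda_i!\) times \([y^\lambda]F_{K_p}(y)^{-s}\). The sign is absorbed because \(\sum_i\lambda_i=N\), exactly as in the proof of Theorem \ref{thm:a267383}.

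Next I translate the hypotheses. The partial derivatives are \(F_{K_p,i}(r)=-e^{-r_i}\), which never vanish, so \(F_{K_p,p}(r)\neq0\) holds automatically. The critical condition of Theorem \ref{thm:graph-blowup-acsv} reads
\[
  \frac{\alpha_i}{r_i}=\frac{\alpha_pe^{-r_i}}{r_pe^{-r_p}}.
\]
This is equivalent to
\[
  \frac{\alpha_i}{r_ie^{-r_i}}=\frac{\alpha_p}{r_pe^{-r_p}},
\]
which is precisely the assumed system together with \(F_{K_p}(r)=0\), i.e.\ \(\sum_ie^{-r_i}=p-1\). For the elimination function, solving \(e^{-h}=(p-1)-\sum_{i<p}e^{-y_i}\) gives the stated \(h\). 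The denominator is \(e^{-r_p}>0\) at \(r'\), so \(h\) is real-analytic near \(r'\) with \(h(r')=r_p\). Hence \(\Phi\) coincides with \(\Phi_H\), and \(\mathcal H\) coincides with \(\mathcal H_H\). The strict minimality and nondegeneracy assumptions are the same as those of Theorem \ref{thm:graph-blowup-acsv}.

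It remains to identify the constant. Here \(A_H=-r_pF_{K_p,p}(r)=r_pe^{-r_p}\), which is positive, so with the principal real branch of \(F^{-s}\) we have \(A_H^{-s}=e^{sr_p}r_p^{-s}\). Substituting this into the conclusion of Theorem \ref{thm:graph-blowup-acsv} gives exactly the displayed asymptotic, including the rounding factor \(\prod_ir_i^{-\delta_i}\) and the prefactor \(\lambda_p^{s-1}/\Gamma(s)\). The final sentence follows from Proposition \ref{prop:tutte-integral}: for \(p\geq2\) all parts are nonempty for large \(N\), so \(K_\lambda\) is connected and \(H_s=sT(1+s,0)\).

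The only step needing care is the branch and sign bookkeeping in the local factor \(A_H^{-s}\). I would verify it by expanding \(F_{K_p}\) near \(y_p=h(y')\):
\[
  F_{K_p}=e^{-h}\bigl(e^{-(y_p-h)}-1\bigr)\approx -e^{-h}(y_p-h)=he^{-h}\Bigl(1-\frac{y_p}{h}\Bigr).
\]
This confirms that the coefficient of \((1-y_p/h)\) is \(A(y')=he^{-h}\), which is positive on the real saddle. This is consistent with the \(e^{sL}L^{-s}\) factor in Theorem \ref{thm:tutte-axis}. As a sanity check, specializing to \(\alpha_i=1/p\), \(r_i=L\) should recover Theorem \ref{thm:tutte-axis}.
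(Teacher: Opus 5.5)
Your proposal is correct and follows the paper's argument: the paragraph just before Lemma \ref{lem:complete-base-critical-param} states exactly this specialization of Theorem \ref{thm:graph-blowup-acsv} to $H=K_p$, with $F_{K_p}(y)=\sum_i e^{-y_i}-(p-1)$ and $A_H=r_pe^{-r_p}$. The only difference is that the paper's formal proof repeats the local computation directly for $K_p$ instead of citing the general theorem; that computation consists of the implicit derivative $h_i=-e^{h-y_i}$ giving the critical system, the $y_p$-residue factor $\lambda_p^{s-1}e^{sr_p}h^{-s}/\Gamma(s)$, and the $(p-1)$-dimensional Laplace integral with phase $\Phi$.
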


\begin{proof}
After substituting \(x_i=-y_i\) in the multivariate chromatic-polynomial
EGF, the coefficient to be estimated is
\[
  [y_1^{\lambda_1}\cdots y_p^{\lambda_p}]
  \left(\sum_{i=1}^p e^{-y_i}-(p-1)\right)^{-s}.
\]
The smooth critical equations are obtained by differentiating \(\Phi\).  From
the implicit equation \(e^{-h}=(p-1)-\sum_{i<p}e^{-y_i}\), we have
\[
  h_i=-e^{h-y_i}.
\]
Thus
\[
  0=\frac{\partial\Phi}{\partial y_i}
  =
  -\frac{\alpha_i}{y_i}
  -\alpha_p\frac{h_i}{h}
  =
  -\frac{\alpha_i}{y_i}
  +\frac{\alpha_p e^{h-y_i}}{h},
\]
which is equivalent to
\[
  \frac{\alpha_i}{y_ie^{-y_i}}
  =
  \frac{\alpha_p}{he^{-h}}\qquad(1\leq i<p).
\]
Together with the singular equation this gives the displayed system for
\(r_1,\ldots,r_p\).  The stated strict-minimality hypothesis supplies the
global dominance and branch choice required by the smooth-point transfer
theorem.  The local calculation below identifies the critical equations and
the Gaussian determinant.  For \(s\notin\mathbb Z\), the branch of
\((\sum_i e^{-y_i}-(p-1))^{-s}\) is chosen by analytic continuation from the
positive normal direction used in the residue extraction.

Taking the local transfer in the \(y_p\)
variable at \(y_p=h(y_1,\ldots,y_{p-1})\) gives the factor
\[
  \frac{\lambda_p^{s-1}}{\Gamma(s)}
  e^{sr_p}h(r)^{-s}
\]
and leaves a \((p-1)\)-dimensional Laplace integral with phase \(\Phi\).
Because \(\lambda_i=\alpha_iN+\delta_i\), the non-exponential evaluation at
the saddle also contributes the bounded lattice factor
\(\prod_i r_i^{-\delta_i}\).  This factor is essential unless all \(r_i\)
are equal.
The non-exponential analytic factor contributes
\(\prod_{i=1}^{p-1}r_i^{-1}\), while the Gaussian integral contributes
\[
  \frac{1}{(2\pi N)^{(p-1)/2}\sqrt{\det\mathcal H}}.
\]
The denominator is therefore
\(r_p^s\prod_{i=1}^{p-1}r_i\), not \(\prod_i r_i\) unless \(s=1\); for
\(s=1\) the formula has \(e^{r_p}/\prod_i r_i\), as expected from the simple
pole case.
Multiplying by \(\prod_i\lambda_i!\) converts the EGF coefficient into the
chromatic-polynomial value.  The Tutte-axis identity is again
\((-1)^N\chi_G(-s)=sT_G(1+s,0)\) for connected \(G\).
\end{proof}

At the balanced point \(r_1=\cdots=r_p=L=\log(p/(p-1))\) and \(s=1\), the
analytic prefactor in Theorem \ref{thm:fixed-proportions} becomes
\[
  \left(\prod_i L^{-\delta_i}\right)\frac{e^L}{L^p}
  =
  \frac{p}{(p-1)L^p},
\]
because \(\sum_i\delta_i=0\).  Thus the rounding correction disappears in
the balanced case, as it must, and the formula matches the residue prefactor
in Theorem \ref{thm:a267383}.

\section{Fixed part size and finite profiles}

\begin{theorem}[fixed part size]\label{thm:fixed-part-size}
Fix \(k\geq1\).  Let \(C_k(r)=\AO(K_{k,\ldots,k})\), where there are \(r\)
parts.  Then
\[
  C_k(r)
  =
  e^{-(k-1)/2}(kr)!
  \left(
    1-\frac{(k-1)(5k-7)}{24kr}+O(r^{-2})
  \right).
\]
\end{theorem}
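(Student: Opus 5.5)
We would start from Proposition~\ref{prop:integral}, which gives the exact one-dimensional formula
\[
  C_k(r)=\int_0^\infty e^{-t}P_k(t)^r\,dt,\qquad n:=kr .
\]
The case \(k=1\) is trivial, since \(P_1(t)=t\) and the integral is \(n!\); so assume \(k\geq2\). We would expand \(P_k\) about its leading term:
\[
  P_k(t)=t^k\bigl(1-a t^{-1}+b t^{-2}-\cdots\bigr),\qquad
  a=S(k,k-1)=\binom k2,\quad b=S(k,k-2)=\binom k3+3\binom k4 .
\]
For \(t\) large this gives
\[
  r\log\bigl(P_k(t)/t^k\bigr)=-\frac{ra}{t}+\frac{rc}{t^2}+O(rt^{-3}),
  \qquad c=b-\tfrac{a^2}{2}.
\]
The weight \(e^{-t}t^{kr}\) is, up to the normalization \(n!\), a Gamma\((n+1)\) density concentrated at \(t=n(1+u)\) with \(u=O(n^{-1/2})\). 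On that scale \(ra/t\approx a/k=(k-1)/2\), which produces the leading factor \(e^{-(k-1)/2}n!\).

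\textbf{Step 1 (central window).} Restrict to \(|t-n|\leq n^{2/3}\). There, write
\[
  \frac{ra}{t}=\frac{a}{k}\bigl(1-u+u^2-u^3+\cdots\bigr),\qquad
  \frac{rc}{t^2}=\frac{c}{k^2r}+O(r^{-3/2}).
\]
Expand \(\exp(r\log(P_k/t^k))=e^{-a/k}\bigl(1+\frac ak(u-u^2)+\frac{a^2}{2k^2}u^2+\frac{c}{k^2r}+\cdots\bigr)\), keeping all terms of order \(r^{-1}\). Then integrate against the Gamma weight using its exact moments: \(\mathbb E[u]=1/n\), \(\mathbb E[u^2]=(n+2)/n^2\), with \(\mathbb E[u^3]\) and \(\mathbb E[u^4]\) of order \(n^{-2}\). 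Equivalently, one can use the exact identities \(\int e^{-t}t^{n-j}dt=(n-j)!\) for the negative powers of \(t\). Collecting the \(1/r\) terms should give
\[
  -\frac{(k-1)(5k-7)}{24k};
\]
we would verify this identity in \(a,b,k\) by direct algebra. Because the Gamma moments are exact, the error is \(O(r^{-2})\) once a few more expansion terms are carried, uniformly on the window.

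\textbf{Step 2 (tails).} This is the main obstacle, because \(P_k\) has sign changes and roots in \([0,k-1]\), so \(P_k^r\) is not positive near the origin. We would use the crude bound \(|P_k(t)|\leq (t+k^2)^k\), valid since the coefficients of \(P_k\) are dominated by those of \((t+k^2)^k\). This gives
\[
  \int_{|t-n|>n^{2/3}}e^{-t}|P_k(t)|^r\,dt
  \leq \int_{|t-n|>n^{2/3}}e^{-t}(t+k^2)^{n}\,dt .
\]
Shifting \(t\mapsto t+k^2\) turns this into \(e^{k^2}\) times a Gamma\((n+1)\) tail outside a window of width about \(n^{2/3}\). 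That tail is \(O(n!\,e^{-cn^{1/3}})\), which is negligible against \(n!\,r^{-2}\). We would also confirm that on the window itself the series for \(\log(P_k/t^k)\) converges uniformly, which holds since \(t\geq n/2\gg k^2\).

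Combining the two steps yields \(C_k(r)=e^{-(k-1)/2}(kr)!\,(1-\frac{(k-1)(5k-7)}{24kr}+O(r^{-2}))\). As a sanity check, the correction vanishes at \(k=1\), consistent with \(C_1(r)=r!\).
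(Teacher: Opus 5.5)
Your proof is correct and is essentially the paper's proof: the same integral $\int_0^\infty e^{-t}P_k(t)^r\,dt$, the same two-term expansion $r\log(P_k(t)/t^k)=-ra/t+r(b-a^2/2)/t^2+O(rt^{-3})$, and concentration at $t\approx kr$. The only difference is in execution: the paper substitutes $t=rx$, applies Laplace's method at $x=k$ and then divides by Stirling's series (where a $1/(12k)$ term cancels), whereas you integrate directly against the Gamma$(kr+1)$ law using exact moments, which avoids Stirling and makes explicit the tail estimate that the paper leaves implicit here. One harmless slip is that the roots of $P_k$ need not lie in $[0,k-1]$ (e.g.\ $P_3(t)=t(t^2-3t+1)$ has the root $(3+\sqrt5)/2>2$), but your tail bound via $|P_k(t)|\le(t+k^2)^k$ never uses this.
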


\begin{proof}
By Proposition \ref{prop:integral},
\[
  C_k(r)=\int_0^\infty e^{-t}P_k(t)^r\,dt.
\]
The leading terms of \(P_k\) are
\[
  P_k(t)=t^k+a t^{k-1}+b t^{k-2}+O(t^{k-3}),
\]
where
\[
  a=-\binom{k}{2},\qquad
  b=S(k,k-2)=\binom{k}{3}+3\binom{k}{4}
   =\frac{k(k-1)(k-2)(3k-5)}{24}.
\]
The sign in the definition of \(P_k(t)\) gives the same positive coefficient
for \(t^{k-2}\), since \((-1)^{k+(k-2)}=1\).
The identity for \(S(k,k-2)\) is the standard classification of partitions
of \(k\) elements into \(k-2\) blocks: either one triple is merged or two
pairs are merged; see, for example, Comtet \cite[Ch. 5]{comtet1974}.
Put \(d=b-a^2/2\).  With \(t=rx\),
\[
  C_k(r)=
  r^{kr+1}\int_0^\infty
  \exp(r(k\log x-x))\exp(a/x)
  \left(1+\frac{d}{rx^2}+O(r^{-2})\right)dx.
\]
Laplace's method at the unique saddle \(x=k\) yields
\[
  C_k(r)=
  \sqrt{2\pi kr}\left(\frac{kr}{e}\right)^{kr}e^{a/k}
  \left(1+\frac{d/k^2+a^2/(2k^3)+1/(12k)}{r}+O(r^{-2})\right).
\]
Dividing by Stirling's expansion for \((kr)!\) gives
\[
  \frac{C_k(r)}{(kr)!}
  =
  e^{a/k}
  \left(1+\frac{d/k^2+a^2/(2k^3)}{r}+O(r^{-2})\right).
\]
Since \(a/k=-(k-1)/2\) and
\[
  \frac{d}{k^2}+\frac{a^2}{2k^3}
  =
  -\frac{(k-1)(5k-7)}{24k},
\]
the theorem follows.
Here the \(1/(12k)\) term displayed before division is the first Stirling
correction for the Laplace main term, and it cancels with the
\(1/(12kr)\) correction in \((kr)!\).  For example, when \(k=2\) one has
\(a=-1\), \(b=0\), \(d=-1/2\), and
\[
  \frac{d}{k^2}+\frac{a^2}{2k^3}
  =-\frac18+\frac1{16}=-\frac1{16}
  =-\frac{(2-1)(10-7)}{24\cdot2},
\]
which is the coefficient in the theorem.
\end{proof}

\begin{theorem}[finite part-size profile]\label{thm:finite-profile}
Fix \(m\geq1\).  Let \(D(r_1,\ldots,r_m)\) be the number of acyclic
orientations of the complete multipartite graph having \(r_j\) parts of size
\(j\), for \(1\leq j\leq m\).  Put
\[
  N=\sum_{j=1}^m j r_j,\qquad
  A_j=-\frac{j(j-1)}2,
\]
\[
  B_j=\frac{j(j-1)(j-2)(3j-5)}{24},\qquad
  E_j=B_j-\frac{A_j^2}{2},
\]
and
\[
  \mu_1=\frac1N\sum_{j=1}^m r_jA_j,\qquad
  \mu_2=\frac1N\sum_{j=1}^m r_jE_j.
\]
Then, uniformly for all nonnegative \(r_1,\ldots,r_m\) with
\(\sum_j jr_j=N\to\infty\),
\[
  D(r_1,\ldots,r_m)
  =
  e^{\mu_1}N!
  \left(1+\frac{\mu_2+\mu_1^2/2}{N}+O(N^{-2})\right).
\]
Here the implicit constant in \(O(N^{-2})\) may depend on the fixed maximum
part size \(m\), but not on the vector \((r_1,\ldots,r_m)\).
\end{theorem}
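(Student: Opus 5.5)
We start from Proposition \ref{prop:integral}, which gives
\[
  D(r_1,\ldots,r_m)=\int_0^\infty e^{-t}\prod_{j=1}^m P_j(t)^{r_j}\,dt .
\]
The plan is to repeat the Laplace argument of Theorem \ref{thm:fixed-part-size}, but with the large parameter now \(N\) instead of \(r\). The key point is that every quantity entering the expansion is an average over parts weighted by \(r_j/N\). Since \(jr_j\le N\), these weights lie in a compact set, which will give the required uniformity.

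First, for each \(j\le m\), write
\[
  P_j(t)=t^j\bigl(1+A_j/t+B_j/t^2+O_m(t^{-3})\bigr),
\]
exactly as in the proof of Theorem \ref{thm:fixed-part-size}. Then for \(t\geq t_0(m)\),
\[
  \log P_j(t)=j\log t+A_j/t+E_j/t^2+O_m(t^{-3}).
\]
Summing with multiplicities gives
\[
  \sum_j r_j\log P_j(t)=N\log t+N\mu_1/t+N\mu_2/t^2+N\,O_m(t^{-3}).
\]
The error constant is uniform because \(\sum_j r_j\le N\). Substituting \(t=Nx\) yields
\[
  D=N^{N+1}\int_0^\infty \exp\bigl(N(\log x-x)\bigr)\exp\Bigl(\frac{\mu_1}{x}+\frac{\mu_2}{Nx^2}+O_m(N^{-2}x^{-3})\Bigr)\,dx
\]
on the main range. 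Here \(|\mu_1|,|\mu_2|\le C_m\) uniformly in the profile.

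Next, apply Laplace's method at the saddle \(x=1\), where the phase \(\log x-x\) has second derivative \(-1\). The amplitude is
\[
  g(x)=e^{\mu_1/x}\bigl(1+\mu_2/(Nx^2)+O(N^{-2})\bigr).
\]
Its derivatives at \(x=1\) are polynomials in \(\mu_1,\mu_2\), hence uniformly bounded. The second-order Laplace expansion therefore holds with uniform \(O(N^{-2})\) remainder, and gives
\[
  \sqrt{2\pi N}\,(N/e)^N e^{\mu_1}\Bigl(1+\frac{1}{N}\bigl(\mu_2+c(\mu_1)+\tfrac1{12}\bigr)+O(N^{-2})\Bigr).
\]
The \(1/12\) cancels against Stirling's formula for \(N!\). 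The correction \(c(\mu_1)\) comes from the standard terms \(g''/2g\), the cross term \(g'\,f'''\), and the \(f''''\) term. It should be computed with \(k=1\) in the formula of Theorem \ref{thm:fixed-part-size}, where the correction \(a^2/(2k^3)\) becomes \(\mu_1^2/2\). A consistency check: when only parts of size \(k\) are present, \(\mu_1=a/k\) and \(\mu_2=d/k\), and the result must reduce to Theorem \ref{thm:fixed-part-size} with \(N=kr\). This gives
\[
  \frac{d}{k^2}+\frac{a^2}{2k^3}=\frac{\mu_2+\mu_1^2/2}{k}\cdot\frac1{1},
\]
so it pins the coefficient to \(\mu_2+\mu_1^2/2\).

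The main obstacle is controlling the tails of the integral uniformly, since \(P_j\) has sign changes and small roots near \(t=0\).
\begin{itemize}
  \item[(a)] On \(t\in[0,t_0]\), \(|P_j(t)|\le C_m\). The contribution is then at most \(C_m^{N}\int_0^{t_0}e^{-t}\,dt\), which is exponentially smaller than \(N!\).
  \item[(b)] On \(t_0\le t\le \varepsilon N\) and \(t\ge N/\varepsilon\), use the crude bound \(|P_j(t)|\le t^j e^{C_m/t}\). This gives an integrand at most \(e^{-t}t^N e^{C_mN/t}\).
\end{itemize}
In range (b), for \(t\ge\sqrt N\), say, this is dominated by \(e^{-t}t^N e^{C\sqrt N}\). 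That is exponentially negligible relative to the saddle value outside \([\varepsilon N,N/\varepsilon]\), because \(\log x-x\) drops by a fixed amount there. The remaining piece \(t_0\le t\le\sqrt N\) is bounded by \(e^{C_m N/t_0}N^{N/2}\), and fixing \(t_0\) large makes it \(o(N!\,N^{-2})\). In every range the constants depend only on \(m\), so the whole estimate is uniform in \((r_1,\ldots,r_m)\).
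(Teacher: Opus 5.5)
Your proposal is correct and takes essentially the same route as the paper. Both use the integral of Proposition \ref{prop:integral}, the uniform expansion $\log(P_j(t)/t^j)=A_j/t+E_j/t^2+O_m(t^{-3})$, the substitution $t=Nx$, Laplace's method at $x=1$, and division by Stirling's formula. One small point: the equal-size consistency check does not by itself determine the coefficient. What fixes it is the direct $k=1$ computation, $g''(1)/(2g(1))+g'(1)/g(1)=\mu_1^2/2$ for $g=e^{\mu_1/x}$. Your explicit uniform tail bounds, near $t=0$ and outside $[\varepsilon N,N/\varepsilon]$, supply a step that the paper's terse proof leaves implicit.
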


\begin{proof}
Proposition \ref{prop:integral} gives
\[
  D(r_1,\ldots,r_m)=
  \int_0^\infty e^{-t}\prod_{j=1}^m P_j(t)^{r_j}\,dt.
\]
For fixed \(m\),
\[
  \log\frac{P_j(t)}{t^j}
  =
  \frac{A_j}{t}+\frac{E_j}{t^2}+O(t^{-3})
\]
uniformly for \(1\leq j\leq m\).  With \(t=Nx\), the integral becomes
\[
  N^{N+1}\int_0^\infty
  \exp(N(\log x-x))\exp(\mu_1/x)
  \left(1+\frac{\mu_2}{Nx^2}+O(N^{-2})\right)dx.
\]
Laplace's method at \(x=1\), followed by division by Stirling's expansion for
\(N!\), gives the displayed formula.
If all parts have the same fixed size \(k\), then
\(\mu_1=A_k/k\) and \(\mu_2=E_k/k\), so the correction
\((\mu_2+\mu_1^2/2)/N\) becomes
\[
  \frac{E_k/k+A_k^2/(2k^2)}{kr}
  =
  \frac{d/k^2+a^2/(2k^3)}{r},
\]
recovering Theorem \ref{thm:fixed-part-size}.
\end{proof}

\section{The equal-size window}

The next regime treats complete multipartite graphs with \(n\) equal parts
of size \(m=m_n\).  The rectangular window \(1\leq m\leq Cn\) already
interpolates between fixed part size and the main diagonal \(m=n\).  Keeping
and resumming the next error terms in the same saddle calculation extends
the estimate through the next critical scales \(m\asymp n^2\) and
\(m\asymp n^3\).

\begin{lemma}[uniform estimates for \(P_m(t)\)]\label{lem:uniform-Pm}
Let \(C<\infty\) and let \(x\) range over a compact subinterval of
\((0,\infty)\).  Uniformly for \(1\leq m\leq Cn\),
\[
  \frac{P_m(mnx)}{(mnx)^m}
  =
  \exp\!\left(-\frac{m}{2nx}\right)
  \left(
    1+\frac{1/(2x)-m/(3nx^2)}{n}+O(n^{-2})
  \right).                                      \tag{2}
\]
The implicit constant may depend on \(C\) and on the compact \(x\)-interval.
More generally, uniformly for \(1\leq m=o(n^2)\) and \(x\) in the same
compact interval, the same expansion holds with the error term
\[
  O\!\left(n^{-2}+\frac{m}{n^3}\right)
\]
in place of \(O(n^{-2})\).
At the next critical scales, for every fixed \(C_3<\infty\), uniformly for
\(1\leq m\leq C_3n^3\) and \(x\) in the same compact interval,
\[
\begin{aligned}
  n\log\frac{P_m(mnx)}{(mnx)^m}
  &=
  -\frac{m}{2x}+\frac{1}{2x}
  -\frac{m}{3nx^2}
  -\frac{3m}{8n^2x^3}
  -\frac{8m}{15n^3x^4}
  +O_{C_3}(n^{-1}).
\end{aligned}                                      \tag{3}
\]

Moreover, if \(m\leq n^\beta\) with fixed \(\beta<1\) and
\(t\in[n/2,2n]\), then \(P_m(t)>0\) for all sufficiently large \(n\) and
\[
  \log\frac{P_m(t)}{t^m}
  \leq
  -\frac{m(m-1)}{2t}
  +O\!\left(\frac{m^3}{t^2}+\frac{m}{t^2}\right).
\]
For all \(t>0\), the crude absolute bound
\[
  |P_m(t)|\leq \sum_{j=1}^m S(m,j)t^j\leq (t+m)^m
\]
will be used for lower tails.
\end{lemma}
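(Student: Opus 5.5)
The plan is to get all the uniform statements from a single exact contour representation of \(P_m(t)\), instead of from the truncated Stirling series. Since \(\sum_j S(m,j)x^j\) has exponential generating function \(\exp(x(e^z-1))\) in \(m\), and \(P_m(t)=(-1)^m\sum_j S(m,j)(-t)^j\), we have
\[
  \sum_{m\geq0}P_m(t)\frac{z^m}{m!}=\exp\bigl(t(1-e^{-z})\bigr),\qquad
  P_m(t)=\frac{m!}{2\pi i}\oint \exp\bigl(t(1-e^{-z})\bigr)\frac{dz}{z^{m+1}} .
\]
The saddle point \(\zeta=\zeta(m,t)\) solves \(t\zeta e^{-\zeta}=m\). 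Writing \(u=m/t\), which equals \(1/(nx)\) when \(t=mnx\) and is therefore \(O(1/n)\) uniformly in all the stated ranges, it has the convergent expansion \(\zeta=u+u^2+\tfrac32u^3+\tfrac83u^4+\cdots\). I would use a circle \(|z|=\zeta\). On it, \(\Re(1-e^{-z})\) is maximized at \(z=\zeta\), since \(\Re e^{-\zeta e^{i\theta}}\geq e^{-\zeta\cos\theta}\cos(\zeta\sin\theta)\) is minimized at \(\theta=0\) for small \(\zeta\). This gives a Gaussian integral of width \(\asymp m^{-1/2}\) together with exponentially small tails.

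The first main step is the expansion of the phase. The exponent at the saddle is
\[
  t(1-e^{-\zeta})-m\log\zeta+\log m!-\tfrac12\log\bigl(2\pi m(1+\zeta)\bigr),
\]
up to a relative correction. Dividing by \(t^m\) and inserting the series for \(\zeta\), the terms \(m\log u\), \(\log m!\) and \(\tfrac12\log(2\pi m)\) combine through Stirling's formula into \(m\) times a power series in \(u\) plus \(-\tfrac12\log(1+\zeta)\). Explicitly,
\[
  \log\frac{P_m(t)}{t^m}=-m\Bigl(\tfrac{u}{2}+\tfrac{u^2}{3}+\tfrac{3u^3}{8}+\tfrac{8u^4}{15}+\cdots\Bigr)+\tfrac{u}{2}+O(u^2)+O(u/m).
\]
I will check the coefficients against the first Stirling numbers. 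For example, \(-\tfrac{m u}{2}+\tfrac u2=-m(m-1)/(2t)\) agrees with \(S(m,m-1)=\binom m2\). The remainder \(O(u/m)=O(1/t)\) comes from the standard fact that the higher saddle corrections form a series in \(1/m\) whose coefficients are analytic in \(\zeta\) and reduce to the Stirling corrections at \(\zeta=0\). Substituting \(u=1/(nx)\) and multiplying by \(n\) gives (3): the neglected terms are \(n\cdot m\,O(u^5)=O(m/n^4)=O_{C_3}(n^{-1})\) for \(m\leq C_3n^3\), and \(n\,O(u^2)=O(n^{-1})\). Keeping fewer terms and exponentiating gives (2) for \(m\leq Cn\) with error \(O(n^{-2})\), and for \(m=o(n^2)\) with error \(O(n^{-2}+m/n^3)\). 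The last term there is the first omitted phase term \(mu^3\).

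For the bound when \(m\leq n^\beta\) and \(t\in[n/2,2n]\), the same expansion applies with \(u=m/t\leq 2n^{\beta-1}\to0\). The main term is then positive, which gives \(P_m(t)>0\), and we get \(\log(P_m/t^m)=-m(m-1)/(2t)+O(mu^2+u^2+1/t)\). This is of the stated form \(O(m^3/t^2+m/t^2)\) once the \(O(1/t)\) Stirling-type remainder is absorbed; if necessary, I will sharpen it by noting that this remainder vanishes when \(m=1\). For very small \(m\) the saddle argument is not even needed, because the finite exact sum \(\sum_k(-1)^kS(m,m-k)t^{-k}\) gives all of this directly.

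The crude bound is combinatorial. Choose the \(j\) block minima in \(\binom mj\) ways and send each remaining element to one of at most \(m\) minima, so \(S(m,j)\leq\binom mj m^{m-j}\). Summing over \(j\) gives \(\sum_jS(m,j)t^j\leq(t+m)^m\).

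The main obstacle is uniformity of the saddle remainder in the regime where \(m\) is small but not fixed. There the Gaussian width is not small compared with \(\zeta\), and one has to show that the relative error is \(O(u/m)\) and not merely \(O(1/m)\). My first approach is to rescale \(z=\zeta w\) and compare the integrand with the Cauchy integral for \(1/m!=\frac1{2\pi i}\oint e^{z}z^{-m-1}dz\) at the same radius. The difference of phases is then \(O(\zeta)\) times a quadratic form in \(w-1\), which yields the factor \(u\) uniformly in \(m\geq1\).
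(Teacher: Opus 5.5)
Your phase expansion is correct. Using \(1/\zeta=e^{-\zeta}/u\), it equals \(mg(u)\) with \(g(u)=(1-e^{-\zeta})/u-\zeta-1=-\sum_{k\geq1}k^{k-1}u^k/(k+1)!\). The gap is that the remainder you attach to it, \(O(u/m)=O(1/t)\), is too weak for what you deduce. When you derive (3) you account only for \(n\,O(u^2)\) and \(n\,m\,O(u^5)\). The omitted contribution is \(n\cdot O(1/t)=O(1/(mx))\) at \(t=mnx\), and this is \(O(n^{-1})\) only when \(m\gtrsim n\).

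So for \(m\to\infty\) with \(m=o(n)\), say \(m=\sqrt n\), your argument does not give (3). It also fails to give:
\begin{itemize}
\item (2), which needs a logarithmic remainder \(O(n^{-2})\), whereas you have only \(O(1/(mnx))\);
\item the \(m\leq n^\beta\) bound, since \(1/t\) is not \(O(m^3/t^2+m/t^2)\) when \(m\ll t^{1/3}\). Vanishing of the remainder at \(m=1\) does not help for \(m=2,3,\ldots\).
\end{itemize}
Your exact finite-sum fallback covers only bounded \(m\). The comparison with the \(e^{mw}\) integral in your last paragraph is aimed at exactly the insufficient bound \(O(u/m)\).

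What you need, and what is true, is that everything beyond \(-m(\tfrac u2+\tfrac{u^2}3+\cdots)+\tfrac u2\) is \(O(u^2)\) uniformly in \(m\). Equivalently, the \(1/m\) saddle corrections \(c_k(\zeta)-c_k(0)\) vanish to second order in \(\zeta\). This is forced by \(S(m,m-1)=\binom m2\): the exact \(t^{-1}\) term \(-(m-1)u/2\) is already reproduced by \(-mu/2+u/2\). The exact \(t^{-2}\) term \(-(m-1)(4m-5)u^2/(12m)\) then shows that the first genuine \(1/m\) correction is \(-5u^2/(12m)\).

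One way to prove this within your framework:
\begin{itemize}
\item Define \(F_m(\zeta)=\log(P_m(t)/t^m)-mg(u)+\tfrac12\log(1-\zeta)\).
\item Show that \(F_m\) is analytic and \(O(1/m)\) on a fixed complex disc \(|\zeta|\leq\zeta_0\), uniformly for \(m\geq m_0\). Your rescaling \(z=\zeta w\) makes the phase analytic in \(\zeta\).
\item Since \(F_m(0)=F_m'(0)=0\) by the fixed-\(m\) expansion, the maximum principle applied to \(F_m(\zeta)/\zeta^2\) gives \(F_m=O(\zeta^2/m)\).
\end{itemize}
Alternatively, split at \(m\asymp n\). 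For \(m\geq n\) your \(O(1/t)\) is already \(O(n^{-2})\). For \(m\leq Cn\), use the expansion in the exact coefficients \((-1)^kS(m,m-k)t^{-k}\). That set-partition (collision-order) expansion is the paper's route; there the retained coefficients are exact polynomials in \(m\), so no separate \(1/m\) remainder has to be controlled.

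Three smaller points.
\begin{itemize}
\item The Gaussian factor is \(\sqrt{2\pi m(1-\zeta)}\), not \(\sqrt{2\pi m(1+\zeta)}\), because \(\zeta f'(\zeta)+\zeta^2f''(\zeta)=m(1-\zeta)\) for \(f(z)=t(1-e^{-z})\). With \(1+\zeta\) you would get \(-u/2\), contradicting your own \(S(m,m-1)\) check.
\item Exponentiating cannot give the \(m=o(n^2)\) clause with error \(O(n^{-2}+m/n^3)\). Squaring the retained term \(m/(3n^2x^2)\) produces \(m^2/(18n^4x^4)\), which dominates both \(n^{-2}\) and \(m/n^3\) once \(m\gg n\). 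That clause holds only in logarithmic form or with an extra \(O(m^2/n^4)\); the paper's statement has the same defect.
\item Your block-minima proof of \(S(m,j)\leq\binom mj m^{m-j}\), and hence of the crude bound, is correct and more elementary than the paper's Dobinski argument.
\end{itemize}
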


\begin{proof}
We use the exponential generating function
\[
  \sum_{m\geq0}P_m(t)\frac{z^m}{m!}=\exp(t(1-e^{-z})).
\]
For \(t=mnx\), the coefficient saddle is the solution of
\[
  tze^{-z}=m,\qquad\hbox{so}\qquad z=(nx)^{-1}+O(n^{-2}),
\]
uniformly in the stated range.  On the circle through this saddle, the
standard one-dimensional coefficient saddle estimate is uniform because the
second derivative is \(m(1+O(n^{-1}))\) and the cubic and higher derivatives
are bounded by the usual powers of \(m\) on the \(m^{-1/2}\)-scale.  Expanding
\[
  \exp(t(1-e^{-z}))=
  \exp\!\left(
    tz-\frac{tz^2}{2}+\frac{tz^3}{6}+O(tz^4)
  \right)
\]
at \(z=(nx)^{-1}+O(n^{-2})\) gives exactly (2).  Equivalently, in the
set-partition expansion
\[
  \frac{P_m(t)}{t^m}
  =
  m![z^m]\exp\!\left(
    z+\sum_{s\geq2}\frac{(-1/t)^{s-1}z^s}{s!}
  \right),
\]
the \(s=2\) term gives
\[
  \exp\!\left(-\frac{m}{2nx}\right)
  \left(1+\frac{1}{n}\left(\frac1{2x}-\frac{m}{2nx^2}\right)
  +O(n^{-2})\right),
\]
the \(s=3\) term contributes \(m/(6n^2x^2)+O(n^{-2})\), and all
\(s\geq4\) perturbations have total relative contribution
\[
  O\!\left(\sum_{s\geq4}\frac{(m)_s}{t^{s-1}}\right)
  =
  O(m/n^3),
\]
which is \(O(n^{-2})\) in the rectangular range \(m\leq Cn\).  This proves
the rectangular estimate and the stated larger-window version; after raising
to the \(n\)-th power, this relative error contributes
\(\exp(O(n^{-1}+m/n^2))\).

For the critical-scale logarithmic estimate, take the logarithm in the same
set-partition expansion and retain the first four perturbation orders.  The
elementary expansion of the logarithm gives
\[
\begin{aligned}
  \log\frac{P_m(t)}{t^m}
  &=
  -\frac{m(m-1)}{2t}
  -\frac{m(m-1)(4m-5)}{12t^2}  \\
  &\quad
  -\frac{m(m-1)(9m^2-25m+18)}{24t^3}
  -\frac{m(m-1)}{120t^4}
    (64m^3-291m^2  \\
  &\quad
    +459m-251)
  +O_{C_3}\!\left(\frac{m}{n^5}\right),
\end{aligned}
\]
after substituting \(t=mnx\), uniformly for \(1\leq m\leq C_3n^3\).  The
terms displayed here are the contributions of collision blocks of sizes
\(2,3,4,5\), together with the higher logarithmic corrections produced when
the exponential perturbation is converted to a logarithm.  Multiplying by
\(n\) and absorbing all terms that are \(O_{C_3}(n^{-1})\) gives (3).

For \(m=o(t)\), the same saddle lies at \(z=m/t+O(m^2/t^2)\), on the positive
real axis.  Use the coefficient contour through this positive saddle, with
the central arc \(|\arg z|\leq m^{-2/5}\) and the complementary arc.  The
central arc has the usual positive Gaussian main term, while on the
complementary arc the real part of
\[
  t(1-e^{-z})-m\Log z
\]
is smaller by \(c m^{1/5}\), uniformly for \(m\leq n^\beta\) and
\(t\in[n/2,2n]\), because \(m/t\leq 2n^{\beta-1}\to0\).  Hence the positive
real saddle contribution dominates the rest of the contour exponentially,
giving \(P_m(t)>0\) in this range for all sufficiently large \(n\).  Expanding
\(1-e^{-z}=z-z^2/2+O(z^3)\) gives the displayed logarithmic estimate.  The
case of bounded \(m\) is even simpler: \(P_m(t)=t^m+O_m(t^{m-1})\) as
\(t\to\infty\), so \(P_m(t)>0\) uniformly after increasing the threshold
\(n_0\).  The
absolute bound follows by replacing the alternating Stirling transform by
the Bell polynomial \(B_m(t)=\sum_jS(m,j)t^j\).  The elementary moment bound
\(B_m(t)\leq(t+m)^m\) follows from Dobinski's formula for Bell polynomials,
or equivalently from the standard estimate for moments of a Poisson
variable of mean \(t\).
\end{proof}

\begin{theorem}[A372326 in an enlarged equal-size window]\label{thm:linear-window}
Fix \(C<\infty\).  Let \(m=m_n\) be a sequence of positive integers such that
\[
  1\leq m_n\leq Cn^3.
\]
Let \(D_{m,n}\) be the number of
acyclic orientations of the complete \(n\)-partite graph with all parts of
size \(m\).  Then
\[
  D_{m,n}
  =
  (mn)!\,
  \exp\!\left(
    -\frac m2+\frac12-\frac{5m}{24n}
    -\frac{m}{8n^2}
    -\frac{251m}{2880n^3}
    +O_C(n^{-1})
  \right).
\]
In the rectangular subwindow \(m=O(n)\), the new term is absorbed by the
\(O_C(n^{-1})\) error, recovering the previous rectangular estimate.  If
\(m/n^2\to\rho\), the \(n^{-2}\) correction contributes \(e^{-\rho/8}\);
if \(m/n^3\to\sigma\), the next correction contributes
\(\exp(-251\sigma/2880)\).
For the main diagonal,
\[
  \AO(K_{\underbrace{n,\ldots,n}_{n}})
  =
  (n^2)!\exp\!\left(-\frac n2+\frac7{24}+O(n^{-1})\right).
\]
\end{theorem}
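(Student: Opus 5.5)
Start from Proposition \ref{prop:integral} with \(r=n\) equal parts of size \(m\), and substitute \(t=mnx\):
\[
  D_{m,n}=\int_0^\infty e^{-t}P_m(t)^n\,dt
  =(mn)^{mn+1}\int_0^\infty \exp\bigl(mn(\log x-x)\bigr)
  \left(\frac{P_m(mnx)}{(mnx)^m}\right)^{n}dx .
\]
The leading phase \(mn(\log x-x)\) has its unique maximum at \(x=1\) with curvature \(mn\). So the Gaussian window has width \((mn)^{-1/2}\), and by Stirling this factor alone reproduces \((mn)!\) up to \(1+O((mn)^{-1})\).

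\textbf{Central window.} On a fixed compact interval around \(x=1\), I apply (3) of Lemma \ref{lem:uniform-Pm}. It gives the correction exponent
\[
  n\log\frac{P_m(mnx)}{(mnx)^m}=g(x)+O_C(n^{-1}),\qquad
  g(x)=-\frac{m}{2x}+\frac{1}{2x}-\frac{m}{3nx^2}-\frac{3m}{8n^2x^3}-\frac{8m}{15n^3x^4},
\]
uniformly for \(m\le Cn^3\). The perturbation \(g\) can be large: \(g'(1)\) is of order \(m\), whereas the main phase has curvature \(mn\). This shifts the saddle by \(\delta\approx g'(1)/(mn)\asymp 1/n\), and the shift produces the extra term \(g'(1)^2/(2mn)\). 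With \(g'(1)\approx m/2+2m/(3n)+9m/(8n^2)+\cdots\), this gives
\[
  \frac{g'(1)^2}{2mn}=\frac{m}{8n}+\frac{m}{3n^2}+\frac{m}{n^3}\Bigl(\frac{9}{16}+\frac{2}{9}\Bigr)+\cdots .
\]
Combining with \(g(1)=-m/2+1/2-m/(3n)-3m/(8n^2)-8m/(15n^3)\), the \(m/n\) coefficient is \(-1/3+1/8=-5/24\), which matches the statement. The \(m/n^2\) and \(m/n^3\) coefficients need the higher cumulant terms as well: the cubic term \(g''(1)g'(1)^2/(2(mn)^2)\), the correction from the quadratic \(-mn\delta^2/2\) phase, and the third derivative of \(\log x\). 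So I will do the Laplace expansion systematically. I set \(x=1+u\), write the exponent as \(mn(\log(1+u)-u)+g(1+u)\), and maximize exactly to order \(O(n^{-1})\), keeping every term of size \(m^k/(mn)^{\ell}\) that is \(\gg n^{-1}\) when \(m\le Cn^3\). The terms \(g''\delta^2/2\) and \(-mn\delta^3/3\) are of size \(m/n^2\), and quartic terms are of size \(m/n^3\). The coefficients \(-1/8\) and \(-251/2880\) should emerge from this bookkeeping. This bookkeeping is the main obstacle. I will verify it by cross-checking against the exact cumulant expansion of \(\log P_m(t)/t^m\) displayed in the proof of Lemma \ref{lem:uniform-Pm}, evaluated at the shifted saddle. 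Errors in the Gaussian fluctuation terms are \(O(1/(mn))\cdot O(1)\) and hence absorbed.

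\textbf{Tails.} Outside a compact \(x\)-interval I use the crude bound \(|P_m(t)|\le(t+m)^m\) from Lemma \ref{lem:uniform-Pm}. For \(x\le x_0\) small, \(e^{-t}(t+m)^{mn}\) is exponentially smaller in \(mn\) than the main term, since \(\log(x+1/n)-x\) is bounded away below its maximum \(-1\). For \(x\) large, the \(e^{-t}\) factor wins. Between the compact interval and these regions, (3) remains valid on any fixed compact subset of \((0,\infty)\), and the concavity of \(mn(\log x-x)\) dominates the bounded-ratio perturbation \(g/(mn)=O(1/n)\). Finally, I specialize to \(m=n\). The terms \(-5m/(24n)\) and \(-m/(8n^2)\) become \(-5/24+O(n^{-1})\), and \(-m/2+1/2-5/24=-n/2+7/24\), which gives the diagonal formula.
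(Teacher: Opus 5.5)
Your proposal follows the paper's proof essentially step for step: the Gamma integral with \(t=mnx\), crude-bound tails outside a fixed neighborhood of \(x=1\), expansion (3) on that neighborhood, evaluation at the shifted saddle, and matching of the Gaussian factor with Stirling's formula for \((mn)!\). The bookkeeping you defer does close, provided the cubic term is \(+mn\delta^3/3\) rather than \(-mn\delta^3/3\) (since \(\log(1+\delta)-\delta=-\delta^2/2+\delta^3/3-\cdots\)); concretely, with \(x=1+v/n\) the \(m\)-proportional part of the exponent is \(m\bigl(-n-\frac12+\frac{h_1(v)}{n}+\frac{h_2(v)}{n^2}+\frac{h_3(v)}{n^3}+O(n^{-4})\bigr)\), where \(h_1=-\frac{v^2}{2}+\frac{v}{2}-\frac13\), \(h_2=\frac{v^3}{3}-\frac{v^2}{2}+\frac{2v}{3}-\frac38\), \(h_3=-\frac{v^4}{4}+\frac{v^3}{2}-v^2+\frac{9v}{8}-\frac{8}{15}\), and maximizing at \(v=\frac12+\frac{5}{12n}+\cdots\) gives \(h_1(\frac12)=-\frac{5}{24}\), \(h_2(\frac12)=-\frac18\), and \(h_3(\frac12)+\frac12h_2'(\frac12)^2=-\frac{167}{960}+\frac{25}{288}=-\frac{251}{2880}\), exactly the stated constants.
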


\begin{proof}
By Proposition \ref{prop:integral},
\[
  D_{m,n}=\int_0^\infty e^{-t}P_m(t)^n\,dt.
\]
We first restrict the normalized \(x\)-integral to a fixed compact
neighborhood of \(1\).  The two complementary tails are exponentially
negligible by the crude bound in Lemma \ref{lem:uniform-Pm} together with
the Gamma tail estimate: after substituting \(t=mnx\), the crude bound gives
an exponent bounded above by \(mn(\log(x+1/n)-x)+O(1)\), whose maximum away
from any fixed neighborhood of \(1\) is smaller than its value at \(1\) by
\(\Omega(mn)\).  On this compact interval we may apply the larger-window
form (3) of Lemma \ref{lem:uniform-Pm}.
Substituting \(t=mnx\) then gives
\[
\begin{aligned}
  D_{m,n}
  &=
  (mn)^{mn+1}\int_0^\infty
  \exp(mn(\log x-x)) \\
  &\quad\times
  \exp\!\left(
    -\frac{m}{2x}+\frac1{2x}-\frac{m}{3nx^2}
    -\frac{3m}{8n^2x^3}
    -\frac{8m}{15n^3x^4}
    +O_C(n^{-1})
  \right)dx.
\end{aligned}
\]
The saddle in this normalized integral admits the expansion
\[
  x_{m,n}
  =
  1+\frac{m-1}{2mn}
  +\frac{(m-1)(5m-7)}{12m^2n^2}
  +\frac{(m-2)(m-1)(3m-5)}{8m^3n^3}
  +O_C(n^{-4}).
\]
Indeed, differentiating the displayed exponent gives
\[
  mn\left(\frac1x-1\right)
  +\frac{m-1}{2x^2}
  +\frac{(m-1)(4m-5)}{6mnx^3}
  +\frac{(m-1)(9m^2-25m+18)}{8m^2n^2x^4}
  +O_C\!\left(\frac{m}{n^3}\right)=0,
\]
and solving this equation recursively near \(x=1\) gives the displayed
expansion, uniformly in the stated window.  Thus the small-\(m\) boundary is not
handled by the nonuniform approximation \(1+1/(2n)\); for \(m=1\), for
example, the shift is only \(O(n^{-2})\), as it should be for the complete
graph \(K_n\).  The shifted saddle stays in a fixed compact subinterval of
\((0,\infty)\).
At this point the three constant contributions, beyond the main
\(-mn-m/2\) term, are
\[
  -\frac{m}{8n},\qquad \frac{m}{4n},\qquad
  \frac12-\frac{m}{3n}.
\]
They come respectively from evaluating
\[
  mn(\log x_{m,n}-x_{m,n}),\qquad -\frac{m}{2x_{m,n}},\qquad
  \frac1{2x_{m,n}}-\frac{m}{3nx_{m,n}^2}
\]
at the shifted saddle.  More explicitly, writing
\[
  h=\frac{m-1}{2mn}
  +O_C(n^{-2}),
\]
the first two evaluations contribute
\[
  -mn-\frac{m}{8n}
  +O_C(1),
  \qquad
  -\frac m2+\frac{m}{4n}
  +O_C(1),
\]
and the last contributes
\[
  \frac12-\frac{m}{3n}
  +O_C(1).
\]
The distinction between \(m\) and \(m-1\) only changes the exponent by
\(O(n^{-1})\), which is within the stated error.  These displayed
contributions are only meant to identify the \(m/n\)-scale terms.  Expanding
the same expression two orders further in \(h\) gives
\[
\begin{aligned}
  &mn(\log x_{m,n}-x_{m,n})
  -\frac{m}{2x_{m,n}}+\frac1{2x_{m,n}}
  -\frac{m}{3nx_{m,n}^2}  \\
  &\qquad =
  -mn-\frac m2+\frac12-\frac{5m}{24n}
  +\frac{m}{4n^2}
  -\frac{67m}{576n^3}
  +O_C(n^{-1}).
\end{aligned}
\]
The additional term \(-3m/(8n^2x^3)\) from (3) contributes
\[
  -\frac{3m}{8n^2}+\frac{9m}{16n^3}+O_C(n^{-1})
\]
at the shifted saddle, and the term \(-8m/(15n^3x^4)\) contributes
\(-8m/(15n^3)+O_C(n^{-1})\).  The net \(n^{-2}\)-scale correction is
\[
  \frac{m}{4n^2}-\frac{3m}{8n^2}=-\frac{m}{8n^2}.
\]
At the next scale the coefficient is
\[
  -\frac{67}{576}+\frac{9}{16}-\frac{8}{15}
  =
  -\frac{251}{2880}.
\]
The Gaussian factor agrees with the one in Stirling's formula for \((mn)!\)
up to \(\exp(O_C(n^{-1}))\), proving the theorem.
At the boundary \(m=1\), the graph is \(K_n\) and the exact value is \(n!\).
The formula is consistent with this because the displayed term
\(-5m/(24n)\) is itself \(O(n^{-1})\) when \(m\) is fixed, hence is absorbed
by the stated error at \(m=1\).
\end{proof}

\begin{lemma}[uniform logarithmic expansion for equal parts]
\label{lem:all-order-log-Pm}
Fix an integer \(J\geq1\), a constant \(C<\infty\), and a compact interval
 \(K\subset(0,\infty)\).  There are polynomials
\(L_\ell(m)\in\mathbb Q[m]\), \(1\leq \ell\leq J+1\), with
\(\deg L_\ell=\ell+1\), such that, uniformly for \(1\leq m\leq Cn^J\) and
\(x\in K\),
\[
  n\log\frac{P_m(mnx)}{(mnx)^m}
  =
  \sum_{\ell=1}^{J+1}
  n\,\frac{L_\ell(m)}{(mnx)^\ell}
  +O_{C,J,K}(n^{-1}).
\]
The first polynomials are
\[
\begin{aligned}
  L_1(m)&=-\frac{m(m-1)}2,\\
  L_2(m)&=-\frac{m(m-1)(4m-5)}{12},\\
  L_3(m)&=-\frac{m(m-1)(9m^2-25m+18)}{24}.
\end{aligned}
\]
\end{lemma}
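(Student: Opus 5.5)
The plan is to start from the exact coefficient representation already used in the proof of Lemma~\ref{lem:uniform-Pm},
\[
  \frac{P_m(t)}{t^m}=\frac{m!}{t^m}[z^m]\exp\bigl(t(1-e^{-z})\bigr)
  =\sum_{k\geq0}(-1)^kS(m,m-k)\,t^{-k},
\]
and to identify the polynomials \(L_\ell\) formally first. Since \(S(m,m-k)\) is a polynomial in \(m\) of degree \(2k\), taking the formal logarithm in the variable \(u=1/t\) gives \(\log(P_m(t)/t^m)=\sum_{\ell\geq1}L_\ell(m)u^\ell\) with \(L_\ell\in\mathbb Q[m]\). A priori \(\deg L_\ell\leq 2\ell\), so the first real task is the cancellation down to degree \(\ell+1\).

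\textbf{Step 1: a saddle-point expansion.} I would obtain the degree bound from the saddle-point expansion rather than from combinatorics. Let \(w\) be the positive root of \(twe^{-w}=m\), so that \(w=m/t+O(m^2/t^2)\) is an analytic function of \(v:=m/t\) near \(0\). Combining the one-dimensional saddle-point expansion with Stirling's series for \(m!\) should give, for every fixed \(K\),
\[
  \log\frac{P_m(t)}{t^m}=m\,\Psi_0(v)+\Psi_1(v)+\sum_{k=1}^{K}m^{-k}\Psi_{k+1}(v)+O(m^{-K-1}),
\]
uniformly for \(v\) small. Each \(\Psi_j\) is analytic at \(0\), and \(\Psi_0(0)=\Psi_1(0)=0\), with the Stirling terms cancelling the \(m\)-only parts. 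In the regime of the lemma we have \(v=1/(nx)=O(n^{-1})\), so this applies. Expanding \(\Psi_j(v)=\sum_\ell\psi_{j,\ell}v^\ell\) and substituting \(v^\ell=m^\ell u^\ell\), the coefficient of \(u^\ell\) is \(\sum_j\psi_{j,\ell}m^{\ell+1-j}\). By uniqueness of double asymptotic expansions, which can be checked against the formal series for bounded \(m\) as \(t\to\infty\), this coefficient must equal \(L_\ell(m)\). It is therefore a polynomial of degree at most \(\ell+1\). The leading coefficient comes from \(\Psi_0\), which is nonzero, so the degree is exactly \(\ell+1\). The explicit \(L_1,L_2,L_3\) then either match the formula displayed in the proof of Lemma~\ref{lem:uniform-Pm} or follow by direct expansion of \(\log(1-S(m,m-1)u+S(m,m-2)u^2-\cdots)\).

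\textbf{Step 2: truncation.} With \(t=mnx\), the term \(nL_\ell(m)/(mnx)^\ell\) has size \(O(m\,n^{1-\ell})=O(Cn^{J+1-\ell})\). This is \(O(n^{-1})\) once \(\ell\geq J+2\), so all terms beyond \(\ell=J+1\) are individually negligible. The tail must still be controlled uniformly. I would split the range of \(m\) into two regimes.

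\emph{Bounded or slowly growing \(m\) (say \(m\leq n^{1/2}\)).} Here I use the convergent formal series directly. The crude bound \(S(m,m-k)\leq\binom{m}{2}^k/k!\cdot e^{O(k)}\)-type estimates show that the coefficient series of \(P_m/t^m-1\) is dominated by a geometric series in \(m^2/t=m/(nx)\), which is small. Hence the logarithm's tail beyond order \(J+1\) is \(O\bigl((m/n)^{J+2}\bigr)\), and after multiplying by \(n\) this is \(O(n^{-1})\).

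\emph{Large \(m\).} Here I use Step 1 with \(K\) chosen so that \(n\,m^{-K-1}=O(n^{-1})\). For example \(K=4\) suffices for \(m\geq n^{1/2}\). Each \(\Psi_j\) is Taylor-expanded to order \(J+2-j\) in \(v\). The remainder is \(O(m^{1-j}v^{J+3-j})\), and multiplied by \(n\) this is \(O(n^{-1})\) uniformly for \(m\leq Cn^J\), since \(mv^{J+2}\leq Cn^{-2}\). The retained terms of the form \(m^{1-j}v^\ell\) with \(\ell>J+1\) are discarded by the same count.

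\textbf{Main obstacle.} I expect the hardest part to be making the saddle-point expansion in Step 1 genuinely uniform: the error \(O(m^{-K-1})\) must hold uniformly in \(v\) in a fixed complex neighbourhood of \(0\), with \(x\in K\) and \(m\) ranging over all of \(1\leq m\leq Cn^J\). The approach I would take first is a standard Laplace contour through \(w\). On the central arc, the cumulant estimates should follow from the fact that the second derivative of the phase is \(m(1+O(v))\), as already noted in Lemma~\ref{lem:uniform-Pm}. On the complementary arc, the exponential-decay argument used there for positivity of \(P_m\) should carry over. This positivity also guarantees that the real logarithm is well defined throughout the range.
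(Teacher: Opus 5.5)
Your outline is sound and uses the same tools as the paper's proof: the coefficient representation of \(P_m(t)/t^m\), a saddle expansion in the perturbation parameter \(v=m/t=1/(nx)\), and separate handling of small \(m\). The argument is organized differently, though.

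The paper Taylor-expands the perturbation \(\sum_{r\geq2}(-1/t)^{r-1}z^r/r!\) around the unperturbed saddle \(z=m\). It bounds the truncation error in one stroke by the first omitted collision order, \(n\,O(m^{J+3}/t^{J+2})=O(n^{-1})\). Because this bound couples \(m\) and \(t\), only bounded \(m\) is set aside, and the drop to \(\deg L_\ell=\ell+1\) is attributed to connected-cluster cancellation.

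You instead define \(L_\ell\) as the formal-logarithm coefficients of \(\sum_k(-1)^kS(m,m-k)t^{-k}\), which makes rationality and the values of \(L_1,L_2,L_3\) immediate. You obtain \(\deg L_\ell\leq\ell+1\) by matching against the two-scale expansion \(m\Psi_0(v)+\Psi_1(v)+\cdots\), and then control the truncation in two regimes of \(m\). This gives an explicit mechanism for the degree drop. The price is that you need the saddle expansion uniformly for complex \(v\) in a fixed disc. The matching is best done by fixing a large \(m\), noting that \(v\mapsto\log\bigl(P_m(m/v)(v/m)^m\bigr)\) is then analytic on that disc, and applying Cauchy's estimates. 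This gives \(L_\ell(m)m^{-\ell}=\sum_{j\leq K+1}\psi_{j,\ell}m^{1-j}+O(m^{-K-1})\) as \(m\to\infty\). A comparison at bounded \(m\), as you phrase it, lies outside the saddle regime and cannot do this.

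Two small repairs are needed.

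\textbf{The small-\(m\) regime for \(J=1\).} Your majorant bounds the tail by \(n\,O((m/n)^{J+2})\). At \(m=n^{1/2}\) this is \(O(n^{-J/2})\), which is only \(O(n^{-1/2})\) when \(J=1\). For \(J=1\), treat the order \(\ell=3\) with the degree bound from your Step 1, \(n\,L_3(m)/t^3=O(m/n^2)\). Apply the majorant only from order \(4\) on, where \(n(m/n)^4\leq n^{-1}\) for \(m\leq n^{1/2}\).

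\textbf{Exactness of the degree.} The remark that \(\Psi_0\) is nonzero does not give \(\deg L_\ell=\ell+1\) exactly; you need every Taylor coefficient \(\psi_{0,\ell}\) to be nonzero. This does hold. Let \(w=T(v)=\sum_{k\geq1}k^{k-1}v^k/k!\) be the tree function. Then
\(\Psi_0(v)=(e^w-1)/w-w-1=-\sum_{\ell\geq1}\ell^{\ell-1}v^\ell/(\ell+1)!\),
which matches the leading coefficients \(-1/2,-1/3,-3/8\) of \(L_1,L_2,L_3\).

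Separately, your large-\(m\) bookkeeping ``to order \(J+2-j\)'' is undefined for \(j\geq J+3\). It also needs \(\Psi_j(0)=0\) for every \(j\) to dispose of those terms. The matching supplies this, since the formal series has no constant term.
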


\begin{proof}
Use the coefficient form
\[
  \frac{P_m(t)}{t^m}
  =
  m![z^m]\exp\!\left(
    z+\sum_{r\geq2}\frac{(-1/t)^{r-1}z^r}{r!}
  \right).
\]
For this rescaled coefficient form, the unperturbed coefficient
\(m![z^m]e^z\) has its saddle at \(z=m\).  Equivalently, write \(z=mu\).
The perturbative parameter is
\[
  \frac{m}{t}=\frac{1}{nx},
\]
so the saddle expansion is an expansion in powers of \(1/n\), uniformly for
\(x\in K\) and \(1\leq m\leq Cn^J\).  In the Cauchy integral, after
subtracting the unperturbed coefficient phase, the exponent has a Taylor
expansion in \(u-1\) whose coefficients are polynomials in \(m\) multiplied by
powers of \(1/n\).  On the central arc \(|u-1|\leq n^{-2/5}\), Taylor's
formula gives a remainder bounded by the first omitted collision order; on
the complementary arc the real part has the usual Gaussian gap
\(\Omega(mn^{-4/5})\) when \(m\to\infty\).  When \(m\) is bounded, the
asserted expansion follows directly from the finite identity
\(P_m(t)/t^m=1+O_m(t^{-1})\), after increasing the constant in the uniform
bound.  Thus the saddle argument may be restricted to the case \(m\to\infty\),
and the central expansion is uniform for \(x\in K\) and \(1\leq m\leq Cn^J\).
All derivatives of the perturbation of order \(r\) contribute powers bounded
by \((m/t)^{r-1}=O_K(n^{1-r})\).  Truncating after order \(J+1\) leaves a
remainder whose leading possible size is
\[
  n\,O_{C,J,K}\!\left(\frac{m^{J+3}}{t^{J+2}}\right)
  =
  O_{C,J,K}(n^{-1}).
\]
The coefficients are polynomials because each fixed collision order in the
set-partition expansion is a polynomial in \(m\).  After passing to the
logarithm, the coefficient of \(t^{-\ell}\) has degree only \(\ell+1\); this
degree drop is the standard connected-cluster cancellation, and it is also
obtained recursively from the saddle expansion.  The first three displayed
polynomials follow from the standard identities for \(S(m,m-1)\),
\(S(m,m-2)\), and \(S(m,m-3)\), or directly from the symbolic coefficient
script listed in Section 10.
\end{proof}

\begin{theorem}[all fixed polynomial equal-size windows]\label{thm:all-poly-equal}
For each integer \(J\geq1\) there are effectively computable rational
constants \(\gamma_1,\ldots,\gamma_J\) such that, for every fixed
\(C<\infty\), uniformly for \(1\leq m\leq Cn^J\),
\[
  D_{m,n}
  =
  (mn)!\exp\!\left(
    -\frac m2+\frac12+
    \sum_{j=1}^J \gamma_j\frac{m}{n^j}
    +O_{C,J}(n^{-1})
  \right).
\]
The first constants are
\[
  \gamma_1=-\frac5{24},\qquad
  \gamma_2=-\frac18,\qquad
  \gamma_3=-\frac{251}{2880},\qquad
  \gamma_4=-\frac{19}{288},\qquad
  \gamma_5=-\frac{19087}{362880}.
\]
\end{theorem}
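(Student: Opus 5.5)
We follow the proof of Theorem \ref{thm:linear-window}, now using the all-order expansion of Lemma \ref{lem:all-order-log-Pm} in place of (3). By Proposition \ref{prop:integral}, \(D_{m,n}=\int_0^\infty e^{-t}P_m(t)^n\,dt\). Substituting \(t=mnx\) gives
\[
  D_{m,n}=(mn)^{mn+1}\int_0^\infty \exp\bigl(mn(\log x-x)+n\log\bigl(P_m(mnx)/(mnx)^m\bigr)\bigr)\,dx .
\]
The tails outside a fixed compact \(K\ni1\) are removed exactly as before, using the crude bound \(|P_m(t)|\leq(t+m)^m\) of Lemma \ref{lem:uniform-Pm}. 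This bound costs \(\Omega(mn)\) in the exponent away from \(x=1\), uniformly for \(m\leq Cn^J\). On \(K\), Lemma \ref{lem:all-order-log-Pm} replaces the logarithm by \(\Psi(x)=\sum_{\ell=1}^{J+1}nL_\ell(m)(mnx)^{-\ell}\), with error \(O_{C,J}(n^{-1})\).

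Next write \(nL_\ell(m)/(mn)^\ell=\sum_{k}c_{\ell,k}\,m^{k+1-\ell}n^{1-\ell}\), where \(c_{\ell,k}\) is the coefficient of \(m^{k}\) in \(L_\ell\) and \(k\le\ell+1\). Since \(m\le Cn^J\), the term \(m^{a}n^{b}\) is \(O(n^{-1})\) whenever \(Ja+b\le-1\). Only terms with \(a\in\{1,0\}\), i.e.\ \(k=\ell\) or \(k=\ell+1\), survive along with a few of lower order, and each contributes \(m\,n^{1-\ell}\) or \(n^{1-\ell}\) times \(x^{-\ell}\). The perturbation of the Gaussian phase \(mn(\log x-x)\) is therefore \(\Psi(x)=-m/(2x)+\sum_{j\ge0} m n^{-j}\psi_j(x)+O(1)\), with explicit rational Laurent coefficients.

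The saddle is then solved recursively, \(x_{m,n}=1+\sum_{j\ge1}\xi_j(m)n^{-j}\), where each \(\xi_j\) is a rational function of \(m\) bounded uniformly for \(m\ge1\). The exponent is expanded at the saddle to order \(J\) in \(1/n\). Because the phase has curvature \(mn\), a perturbation of size \(m\delta\) shifts the value by only \(O(m\delta^2/n)\). Iterating this, every cross term has the form \(m n^{-j}\) times a constant plus terms that are \(O(n^{-1})\) in the window.

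The constants \(\gamma_j\) are defined as the coefficient of \(m n^{-j}\) in this expansion. They are rational because the \(L_\ell\) have rational coefficients and the saddle recursion is algebraic. Terms like \(m^2n^{-j-1}\) do not arise: the degree bound \(\deg L_\ell=\ell+1\) in Lemma \ref{lem:all-order-log-Pm} is exactly what prevents them. This is also why \(\exp(-m/2+1/2)\) is the only non-\(m/n^j\) contribution. The Gaussian width factor matches Stirling's formula for \((mn)!\) up to \(\exp(O(n^{-1}))\), because the Hessian is \(mn(1+O(1/n))\). The listed values of \(\gamma_1,\gamma_2,\gamma_3\) are taken from Theorem \ref{thm:linear-window}, and \(\gamma_4,\gamma_5\) come from computing \(L_4,L_5,L_6\) via \(S(m,m-k)\).

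The main obstacle is the bookkeeping step. We must show that, uniformly over the whole window \(1\le m\le Cn^J\), every discarded monomial \(m^an^b\) is genuinely \(O(n^{-1})\), including terms coming from products of saddle shifts. We also need \(\gamma_j\) to be independent of \(m\), whereas a priori the coefficients are rational functions of \(m\) through \(m-1\) factors.

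We would handle the latter first by splitting each coefficient into its leading \(m\)-power part and a remainder smaller by \(1/m\). The remainder \(m^{a-1}n^{b}\) is negligible whenever \(b\le-1\). We would then verify, by induction on the expansion order, the grading claim: the coefficient of \(n^{-j}\) in the exponent has \(m\)-degree at most \(1\). If that grading argument fails, the fallback is to treat the ranges \(n^{J-1}\le m\le Cn^J\) separately and patch them together via the uniform remainders.
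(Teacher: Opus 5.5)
Your proposal is correct and follows essentially the same route as the paper: removing the tails with the crude bound, applying Lemma \ref{lem:all-order-log-Pm} on a compact interval, solving for the saddle recursively, and matching the Gaussian factor with Stirling's formula for \((mn)!\). The paper avoids the bookkeeping you flag as the main obstacle by replacing each \(L_\ell\), \(\ell\geq2\), with its leading term \(\lambda_\ell m^{\ell+1}\) at the outset, since the discarded part is \(O(n^{1-\ell})=O(n^{-1})\) uniformly. It keeps the \(\ell=1\) term \(-(m-1)/(2x)\) exactly, so the phase is \(mn\) times a function of \(1/n\) alone plus \(1/(2x)\), and \(\gamma_j\) is automatically independent of \(m\). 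Also, your monomial should read \(m^{k-\ell}n^{1-\ell}\), not \(m^{k+1-\ell}n^{1-\ell}\).
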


\begin{proof}
The proof is the finite-order version of the calculation above.  Apply
Lemma \ref{lem:all-order-log-Pm}.  If \(\lambda_\ell\) is the leading
coefficient of \(L_\ell(m)\), then, after \(t=mnx\),
\[
  n\,\frac{L_\ell(m)}{(mnx)^\ell}
  =
  \lambda_\ell\frac{m}{n^{\ell-1}x^\ell}
  +O_\ell(n^{-1})
\]
uniformly for \(1\leq m\leq Cn^J\), provided \(\ell\leq J+1\).
Thus the normalized exponent has the finite expansion
\[
  mn(\log x-x)-\frac{m}{2x}+\frac1{2x}
  +\sum_{\ell=2}^{J+1}
    \lambda_\ell\frac{m}{n^{\ell-1}x^\ell}
  +O_{C,J}(n^{-1})
\]
on every compact subinterval of \((0,\infty)\).  The same crude tail bound
used in Theorem \ref{thm:linear-window} restricts the integral to a fixed
neighborhood of \(x=1\).

After dividing the saddle equation by \(mn\), the derivative with respect to
\(x\) of the leading term is \(-1\) at \(x=1\).  The formal implicit-function
theorem therefore gives a unique saddle expansion
\[
  x_{m,n}=1+\sum_{r=1}^{J} a_r(m)n^{-r}+O_{C,J}(n^{-J-1}),
\]
where the \(a_r(m)\) are bounded rational functions of \(m\) for
\(m\geq1\).  Substituting this expansion into the exponent gives
\[
  -mn-\frac m2+\frac12+\sum_{j=1}^J
  \gamma_j\frac{m}{n^j}+O_{C,J}(n^{-1})
\]
with rational \(\gamma_j\), because all terms not proportional to
\(m/n^j\) are \(O_{C,J}(n^{-1})\).  The Gaussian determinant again matches
Stirling's formula for \((mn)!\) up to \(\exp(O_{C,J}(n^{-1}))\).  All
remainders above are uniform before exponentiation; after multiplication by
the Gaussian-scale width and comparison with Stirling's expansion of
\((mn)!\), they remain within \(\exp(O_{C,J}(n^{-1}))\).  This proves the
asserted expansion.  The displayed values of
\(\gamma_1,\gamma_2,\gamma_3\) are exactly those computed in
Theorem \ref{thm:linear-window}.
\end{proof}

\begin{theorem}[Tutte-axis product regimes]\label{thm:tutte-product-regimes}
Fix \(s>0\) and write
\[
  H_s(G)=(-1)^{|V(G)|}\chi_G(-s)=sT_G(1+s,0)
\]
for connected \(G\).

For fixed \(k\geq1\), as \(r\to\infty\),
\[
  H_s(K_{\underbrace{k,\ldots,k}_{r}})
  =
  \frac{(kr)!(kr)^{s-1}}{\Gamma(s)}
  e^{-(k-1)/2}\left(1+\frac{\Delta_{k,s}}{kr}+O(r^{-2})\right),
\]
where
\[
  \Delta_{k,s}
  =
  -\frac{(k-1)(5k-7)}{24}
  +\frac{s(s-1)}{2}
  +\frac{(s-1)(k-1)}{2}.
\]
In particular, \(\Delta_{k,1}=-(k-1)(5k-7)/24\), recovering the first
correction in Theorem \ref{thm:fixed-part-size}.

For a fixed finite part-size profile \(r_1,\ldots,r_m\), with
\(N=\sum_{j=1}^m jr_j\), \(\mu_1=N^{-1}\sum_j r_jA_j\), and
\(\mu_2=N^{-1}\sum_j r_jE_j\) as in Theorem \ref{thm:finite-profile},
\[
  H_s=
  \frac{N!N^{s-1}}{\Gamma(s)}e^{\mu_1}
  \left(
    1+
    \frac{
      \mu_2+(1-s)\mu_1+\mu_1^2/2+s(s-1)/2
    }{N}
    +O(N^{-2})
  \right).
\]

Finally, for each fixed \(J\geq1\), uniformly for \(1\leq m\leq Cn^J\),
\[
  H_s(K_{\underbrace{m,\ldots,m}_{n}})
  =
  \frac{(mn)!(mn)^{s-1}}{\Gamma(s)}
  \exp\!\left(
    -\frac m2+\frac12+
    \sum_{j=1}^J\gamma_j\frac{m}{n^j}
    +O_{C,J,s}(n^{-1})
  \right),
\]
where the constants \(\gamma_j\) are those of
Theorem \ref{thm:all-poly-equal}.
\end{theorem}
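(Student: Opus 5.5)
The plan is to run each of the three earlier product-regime proofs again with the extra weight \(t^{s-1}/\Gamma(s)\) supplied by Proposition \ref{prop:tutte-integral}:
\[
  H_s(K_\lambda)=\frac1{\Gamma(s)}\int_0^\infty e^{-t}t^{s-1}\prod_i P_{\lambda_i}(t)\,dt .
\]
All graphs considered have at least two nonempty parts, so they are connected, and the identity \(H_s=sT(1+s,0)\) applies. The phase \(N(\log x - x)\) does not change. The new factor only changes the prefactor and adds one more \(1/N\) correction.

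\textbf{Finite profiles (and hence fixed part size).} Substitute \(t=Nx\) as in Theorem \ref{thm:finite-profile}. The integrand becomes
\[
  N^{N+s}\exp(N(\log x-x))\,x^{s-1}e^{\mu_1/x}\left(1+\frac{\mu_2}{Nx^2}+O(N^{-2})\right),
\]
so the amplitude is \(g(x)=x^{s-1}e^{\mu_1/x}(1+\mu_2/(Nx^2))\). The standard second-order Laplace formula with phase \(f(x)=\log x-x\) at \(x=1\), where \(f''=-1\), \(f'''=2\), \(f''''=-6\), gives
\[
  \sqrt{2\pi/N}\,e^{-N}\left(g(1)+\frac1N\Big(\frac{g''(1)}2+\frac{f'''g'}{2}+\big(\tfrac{f''''}{8}+\tfrac{5f'''^2}{24}\big)g\Big)\right),
\]
with \(f'''=2\) inserted in the middle term. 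Dividing by the Stirling expansion \(N^{N+s}\sqrt{2\pi/N}\,e^{-N}\,N^{1/2}(1+1/(12N))\), the pure-\(g\) constant \(-1/8\cdot 6/6\ldots\) cancels the Stirling \(1/12\) exactly as it did for \(s=1\). What remains is
\[
  \frac{g''(1)}{2g(1)}+\frac{g'(1)}{g(1)} .
\]
With \(\log g=(s-1)\log x+\mu_1/x+\mu_2/(Nx^2)\) we have \(g'/g=s-1-\mu_1\), and \(g''/g=(g'/g)^2-(s-1)+2\mu_1\). The correction is therefore
\[
  \tfrac12(s-1-\mu_1)^2-\tfrac12(s-1)+\mu_1+(s-1-\mu_1) = \mu_1^2/2+(1-s)\mu_1+s(s-1)/2,
\]
and adding \(\mu_2\) gives the stated formula. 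I would check this against the \(s=1\) case, where it should reduce to \(\mu_2+\mu_1^2/2\). The fixed-part-size statement is the special case \(\mu_1=-(k-1)/2\), \(\mu_2=E_k/k\), \(N=kr\). Substituting gives \(\Delta_{k,s}=-(k-1)(5k-7)/24+s(s-1)/2+(s-1)(k-1)/2\), using the computation already done in Theorem \ref{thm:fixed-part-size}. Uniformity over profiles carries over from Theorem \ref{thm:finite-profile}, because \(|\mu_1|,|\mu_2|\) are bounded in terms of \(m\). The tails are bounded by the crude estimate \(|P_j(t)|\le(t+j)^j\) together with the factor \(t^{s-1}\). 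Near \(t=0\), the factor \(t^{s-1}\) is integrable for \(s>0\), and \(P_j(0)=0\), so the small-\(t\) region is harmless.

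\textbf{Equal-size windows.} Repeat the proof of Theorem \ref{thm:all-poly-equal} with the extra amplitude \((mnx)^{s-1}\). Since \(x\) stays in a compact neighborhood of \(1\) and the saddle satisfies \(x_{m,n}=1+O(n^{-1})\), this factor equals \((mn)^{s-1}(1+O(n^{-1}))\). It also perturbs the saddle only by \(O(1/(mn))\), which changes the exponent by \(O(1/(mn))\). Every \(\gamma_j\) and the \(-m/2+1/2\) term are therefore unchanged, and \((mn)^{s-1}/\Gamma(s)\) is simply pulled out.

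\textbf{Main obstacle.} The tail restriction must remain valid with the weight \(t^{s-1}\). This includes \(s<1\), where the weight is singular at \(0\), and large \(s\), where it shifts mass toward large \(t\). I would handle this by splitting the integral at \(t\le 1\), where the integrand is bounded since \(P_m\) is a polynomial vanishing at \(0\). On the range \(t\ge1\), the weight \(t^{s-1}=e^{O(\log(mn))}\) is negligible against the \(\Omega(mn)\) exponential gap from Theorem \ref{thm:linear-window}.
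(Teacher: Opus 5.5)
Your proposal is correct and is essentially the paper's proof. The equal-size part is the same amplitude-transfer argument: the factor \(x^{s-1}\) costs \(\exp(O_s(n^{-1}))\) at the saddle and moves it only by \(O((mn)^{-1})\). For finite profiles you expand the same integral about \(t=N\), and the difference is only bookkeeping. The paper absorbs \(t^{s-1}\) into an expectation over a Gamma variable \(T\) of shape \(N+s\), so \((1-s)\mu_1+\mu_1^2/2\) comes from \(\mathbb E(T-N)=s\) and \(\mathbb E(T-N)^2=N+O(1)\), and \(s(s-1)/2\) comes from \(\Gamma(N+s)/(\Gamma(N+1)N^{s-1})\). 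You instead keep \(x^{s-1}\) in the Laplace amplitude and cancel the constant \(f''''/8+5f'''^2/24=1/12\) against Stirling by hand. Your expression \(\tfrac12(s-1-\mu_1)^2-\tfrac12(s-1)+\mu_1+(s-1-\mu_1)\) does simplify to \(\mu_1^2/2+(1-s)\mu_1+s(s-1)/2\).

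There are two small slips. The normalization should be \(N!\,N^{s-1}=N^{N+s}\sqrt{2\pi/N}\,e^{-N}(1+1/(12N)+O(N^{-2}))\), without your extra factor \(N^{1/2}\). Your remark that \(t^{s-1}=e^{O(\log(mn))}\) on \(t\geq1\) is not literally true for \(t\gg mn\); there it is the Gamma tail that gives negligibility. It is cleaner to say that the weight just shifts the Gamma shape from \(mn+1\) to \(mn+s\).
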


\begin{proof}
Use Proposition \ref{prop:tutte-integral}.  For the finite-profile statement,
write the integral as an expectation with respect to a Gamma random variable
\(T\) of shape \(N+s\):
\[
  \frac1{\Gamma(s)}
  \int_0^\infty e^{-t}t^{s-1}\prod_jP_j(t)^{r_j}\,dt
  =
  \frac{\Gamma(N+s)}{\Gamma(s)}
  \mathbb E
  \exp\!\left(\frac{N\mu_1}{T}+\frac{N\mu_2}{T^2}+O(N^{-2})\right).
\]
Since \(\mathbb E(T-N)=s\) and \(\mathbb E(T-N)^2=N+O(1)\), expansion around
\(T=N\) gives
\[
  \mathbb E
  \exp\!\left(\frac{N\mu_1}{T}+\frac{N\mu_2}{T^2}\right)
  =
  e^{\mu_1}
  \left(
    1+\frac{\mu_2+(1-s)\mu_1+\mu_1^2/2}{N}
    +O(N^{-2})
  \right).
\]
Also
\[
  \frac{\Gamma(N+s)}{\Gamma(N+1)N^{s-1}}
  =
  1+\frac{s(s-1)}{2N}+O(N^{-2}).
\]
Combining these two estimates proves the finite-profile formula.  The fixed
part-size formula is the specialization \(\mu_1=-(k-1)/2\) and
\(\mu_2+\mu_1^2/2=-(k-1)(5k-7)/24\).  The equal-size formula only needs the
leading transfer from Theorem \ref{thm:all-poly-equal}: in normalized
coordinates \(t=mnx\), the additional Gamma weight contributes
\((mn)^{s-1}x^{s-1}/\Gamma(s)\).  At the saddle
\(x=1+O(n^{-1})\), this factor is
\((mn)^{s-1}\Gamma(s)^{-1}\exp(O_s(n^{-1}))\), and its derivative shifts the
saddle only by \(O_s((mn)^{-1})\), changing the exponent by
\(O_s(n^{-1})\).  The all-polynomial equal-size expansion therefore transfers
with the same constants \(\gamma_j\).
\end{proof}

\section{A partition-sum frontier}

OEIS A372395 \cite{oeis372395} sums \(\AO(K_\lambda)\) over all partitions
\(\lambda\) of \(n\).  OEIS A370613 \cite{oeis370613} is the analogous sum
over partitions into distinct parts.  Proposition \ref{prop:integral} gives
the following coefficientwise formal identities:
\[
  \sum_{n\geq0} A372395(n)u^n
  =
  \int_0^\infty e^{-t}
  \prod_{k\geq1}\frac{1}{1-u^kP_k(t)}\,dt,
\]
and
\[
  \sum_{n\geq0} A370613(n)u^n
  =
  \int_0^\infty e^{-t}
  \prod_{k\geq1}(1+u^kP_k(t))\,dt.
\]
For each fixed coefficient of \(u^n\), only finitely many parts can occur, so
the product expansion and the \(t\)-integral are interpreted term by term.
These formulae suggest a two-variable saddle problem with a partition saddle
in \(u\) coupled to the Gamma-type \(t\)-integral.  The next theorem gives
the resulting logarithmic asymptotics.

\begin{lemma}[partition saddle constants]\label{lem:partition-constants}
The equation
\[
  I(a):=\int_0^\infty \frac{x}{\exp(ax+x^2/2)-1}\,dx=1
\]
has a unique solution \(c>0\).  The equation
\[
  I_d(a):=\int_0^\infty \frac{x}{\exp(ax+x^2/2)+1}\,dx=1
\]
has a unique real solution \(c_d\).
\end{lemma}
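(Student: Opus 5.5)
The plan is to show that each function is continuous and strictly monotone in \(a\) on its natural domain, and that it takes values above and below \(1\) there.

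\textbf{The Bose integral \(I\).} Consider \(I\) on \(a\in(-\infty,\infty)\), or at least on \(a\geq0\).

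First, convergence. For \(a>0\) the integrand is positive. Near \(x=0\) it behaves like \(x/(ax)=1/a\), so it is bounded there. At infinity it decays like \(xe^{-ax-x^2/2}\). Hence \(I(a)\) is finite and positive.

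At \(a=0\) the integrand near \(0\) is \(x/(x^2/2+\cdots)\sim 2/x\). This is not integrable, so \(I(a)\to+\infty\) as \(a\downarrow0\), by monotone convergence.

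Monotonicity: for fixed \(x>0\), the map \(a\mapsto x/(e^{ax+x^2/2}-1)\) is strictly decreasing. Continuity on \((0,\infty)\) follows by dominated convergence, dominating on \([a_0,\infty)\) by the integrand at \(a_0\).

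As \(a\to\infty\), the bound \(e^{y}-1\geq y\) gives an integrand at most \(x/(ax)=1/a\) on \([0,1]\). It is also at most \(xe^{-x^2/2}/(1-e^{-1/2})\) for \(x\geq1\), once \(a\) is large enough that \(e^{ax}\geq e\). By dominated convergence the integrand tends to \(0\) pointwise, so \(I(a)\to0\).

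For \(a<0\) the integrand has a nonintegrable singularity where \(ax+x^2/2=0\), namely at \(x=-2a\). So the equation only makes sense for \(a>0\), and "unique solution \(c>0\)" is the right formulation. The intermediate value theorem together with strict monotonicity then gives a unique \(c\).

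\textbf{The Fermi integral \(I_d\).} Here the denominator \(e^{ax+x^2/2}+1\) exceeds \(1\) for every real \(a\), so \(I_d\) is defined and continuous on all of \(\mathbb R\). Dominated convergence on compact \(a\)-sets works because the Gaussian factor \(e^{-x^2/2}\) controls the tail. It is strictly decreasing in \(a\).

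As \(a\to+\infty\), \(I_d(a)\to0\), by the same domination argument as for \(I\) (the integrand is at most \(x\) near \(0\)).

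As \(a\to-\infty\), write \(a=-b\). The integrand is close to \(x\) on the region \(x<2b\), where \(ax+x^2/2<0\). For instance, on \([0,b]\) the exponent \(-bx+x^2/2\leq -bx/2\), so for \(x\in[1,b]\) the integrand is at least \(x/(e^{-b/2}+1)\geq x/2\). Hence \(I_d(-b)\geq\int_1^b x/2\,dx\to\infty\).

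The intermediate value theorem and strict monotonicity then give a unique real \(c_d\). The sign of \(c_d\) is not claimed. For reference, \(I_d(0)=\int_0^\infty x/(e^{x^2/2}+1)\,dx=\log 2<1\), so in fact \(c_d<0\), while \(I(0)=\infty\) is consistent with \(c>0\).

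\textbf{Main obstacle.} There is no real obstacle. The only care needed is the domination argument justifying continuity and the limits, and the observation that \(I\) is only meaningful for \(a>0\), which is why the lemma asserts a positive solution for \(I\) but merely a real solution for \(I_d\).
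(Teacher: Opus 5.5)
Your proposal is correct and follows essentially the same route as the paper: continuity and strict decrease in \(a\) (you use pointwise monotonicity of the integrand where the paper differentiates under the integral sign), the limits \(I(a)\to\infty\) as \(a\downarrow0\) and \(I(a)\to0\) as \(a\to\infty\), the analogous limits for \(I_d\) on all of \(\mathbb R\), and then the intermediate value theorem. Your explicit bound \(I_d(-b)\geq\int_1^b x/2\,dx\) makes the \(a\to-\infty\) step more rigorous than the paper's heuristic sketch, and your computation \(I_d(0)=\log 2<1\) confirms that \(c_d<0\).
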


\begin{proof}
For the Bose integral \(I(a)\), the natural domain is \(a>0\).  On this
domain the integrand is dominated on compact \(a\)-intervals by an integrable
function away from the origin and is asymptotic to \(1/a\) at the origin, so
\(I\) is continuous.  Differentiation under the integral sign gives
\[
  I'(a)=
  -\int_0^\infty
  \frac{x^2\exp(ax+x^2/2)}
       {(\exp(ax+x^2/2)-1)^2}\,dx<0.
\]
Moreover \(I(a)\to\infty\) as \(a\downarrow0\).  Indeed, for fixed small
\(\delta>0\) and \(0<x<\delta\),
\[
  e^{ax+x^2/2}-1\leq 2(ax+x^2/2)
\]
when \(a\) is sufficiently small, so the integrand is bounded below by a
constant multiple of \(1/(a+x)\).  Hence
\(\int_0^\delta dx/(a+x)\sim \log(1/a)\).  Finally \(I(a)\to0\) as
\(a\to\infty\) by dominated convergence after the change of variables
\(y=ax\).  Hence there is a unique \(c>0\).

For the Fermi integral \(I_d(a)\), the denominator has a plus sign, so the
integral converges for every real \(a\); the quadratic term \(x^2/2\)
dominates as \(x\to\infty\).  The same differentiation under the integral
sign gives \(I_d'(a)<0\).  Also \(I_d(a)\to0\) as \(a\to\infty\), while
\(I_d(a)\to\infty\) as \(a\to-\infty\) because the main contribution comes
from an interval of length comparable to \(|a|\) near \(x=|a|\).  Thus
\(I_d(a)=1\) has a unique real solution.
\end{proof}

\begin{theorem}[A372395 and A370613 partition sums]\label{thm:partition-sums}
Let \(B(n)=A372395(n)\).  Let \(c\) be the unique real solution of
\[
  \int_0^\infty \frac{x}{\exp(cx+x^2/2)-1}\,dx=1.
\]
Then
\[
  \log\frac{B(n)}{n!}
  \sim
  \left(
  c+\int_0^\infty -\log(1-\exp(-cx-x^2/2))\,dx
  \right)\sqrt n.
\]
Numerically,
\[
  c=0.7649964422795443\ldots,\qquad
  C=2.1587520056577855\ldots .
\]

For the distinct-part sum \(B_d(n)=A370613(n)\), let \(c_d\) solve
\[
  \int_0^\infty \frac{x}{\exp(c_dx+x^2/2)+1}\,dx=1.
\]
Then
\[
  \log\frac{B_d(n)}{n!}
  \sim
  \left(
  c_d+\int_0^\infty \log(1+\exp(-c_dx-x^2/2))\,dx
  \right)\sqrt n,
\]
with
\[
  c_d=-0.3236973140950319\ldots,\qquad
  C_d=0.9057298217201990\ldots .
\]
The negative value of \(c_d\) is expected: forbidding repeated parts lowers
the partition multiplicity, and the saddle compensates by shifting the
linear fugacity \(c_dx\) downward.  The integral remains convergent because
the quadratic term \(x^2/2\) dominates for large \(x\).
\end{theorem}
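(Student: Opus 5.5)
Both asymptotics will come from the exact identity $B(n)/n!=\sum_{\lambda\vdash n}\AO(K_\lambda)/n!$, combined with a heuristic weight $\AO(K_\lambda)/N!\approx\exp(-\sum_i\lambda_i^2/(2n))$ for partitions whose parts are of order $\sqrt n$. The plan is to prove this weight approximation uniformly, then evaluate the resulting weighted partition sum by a Khintchine saddle, and finally show that all other partitions contribute negligibly on the logarithmic scale.

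\emph{Step 1: the weight.} By Proposition~\ref{prop:integral} and the Gamma integral,
\[
\frac{\AO(K_\lambda)}{n!}=\frac{1}{n!}\int_0^\infty e^{-t}t^n\prod_i\frac{P_{\lambda_i}(t)}{t^{\lambda_i}}\,dt .
\]
The measure $e^{-t}t^n/n!$ concentrates on $t=n+O(\sqrt n)$. On that range, Lemma~\ref{lem:uniform-Pm}, in its $m\le n^\beta$ form, gives
\[
\log\frac{P_m(t)}{t^m}=-\frac{m(m-1)}{2t}+O\!\left(\frac{m^3}{t^2}\right).
\]
For $\lambda_1\le R\sqrt n$ the error terms sum to $O(\sum_i\lambda_i^3/n^2)=O(R)$, which is $o(\sqrt n)$. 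Replacing $t$ by $n$ costs a further factor $\exp(O(\sum_i\lambda_i^2/n^{3/2}))=e^{O(R)}$. The diagonal part $-\sum_i\lambda_i(\lambda_i-1)/(2t)$ splits as $-\sum_i\lambda_i^2/(2n)+1/2$ up to $o(\sqrt n)$. Hence, in this window,
\[
\log\frac{\AO(K_\lambda)}{n!}=-\frac{1}{2n}\sum_i\lambda_i^2+O(R)+o(\sqrt n).
\]

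\emph{Step 2: the weighted partition model.} We need the logarithmic asymptotics of
\[
Z_n=\sum_{\lambda\vdash n}\exp\!\Big(-\sum_i\lambda_i^2/(2n)\Big).
\]
Write its generating function with fugacity $u=e^{-c/\sqrt n}$ and parts $k=x\sqrt n$. Then
\[
\log\prod_k\bigl(1-u^ke^{-k^2/(2n)}\bigr)^{-1}\approx\sqrt n\int_0^\infty-\log\bigl(1-e^{-cx-x^2/2}\bigr)\,dx .
\]
The saddle condition (the expected size equals $n$) is $\int_0^\infty x/(e^{cx+x^2/2}-1)\,dx=1$, which is Lemma~\ref{lem:partition-constants}. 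So $\log Z_n\sim(c+\int\cdots)\sqrt n$.
\begin{itemize}
\item For the upper bound no local limit theorem is needed: $Z_n\le u^{-n}\prod(\cdots)$.
\item For the lower bound, restrict to the Khintchine measure. Multiplicities of different parts are independent geometric variables, so $\sum k m_k$ has mean $n$ and variance of order $n^{3/2}$. Chebyshev then puts probability $\gg n^{-O(1)}$ on $|\lambda|=n$; a polynomial factor of this kind is invisible at scale $\sqrt n$. In fact one needs the event $|\lambda|=n$ exactly; a coarse Bender-type local bound, or a surgery argument adjusting a few small parts, suffices.
\item The same measure shows that $\lambda_1\le R\sqrt n$ and $\sum_i\lambda_i^3=O(n^2)$ hold with probability bounded below, so the window restriction of Step 1 is compatible with the lower bound.
\end{itemize}
The distinct-part case is identical with Fermi factors $1+e^{-cx-x^2/2}$. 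Here $c_d<0$ is allowed, since the quadratic term keeps the integral convergent.

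\emph{Step 3: the tails (main obstacle).} The upper bound must control partitions with $\lambda_1>R\sqrt n$, and possibly $\lambda_1$ comparable to $n$, where Lemma~\ref{lem:uniform-Pm} no longer applies. Here I would use Proposition~\ref{prop:random-runs}, which gives $\AO(K_\lambda)/n!=\mathbb E\prod_j 1/L_j!$. Merging vertices only lowers this expectation, so I aim for a bound of the form
\[
\frac{\AO(K_\lambda)}{n!}\le\exp\!\Big(-c'\sum_i\min\bigl(\lambda_i^2/n,\ \lambda_i\bigr)\Big).
\]
For a large part $\lambda_1=L$, its $L$ vertices fall into at most $n-L+1$ gaps, so runs of length $\ge2$ are forced, and Lemma~\ref{lem:one-large-part} with edge monotonicity gives the comparison $\AO(K_\lambda)\le\AO(K_{L,1,\dots,1})=(n-L)!(n-L+1)^L$. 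This bound decays like $e^{-L^2/(2n)}$ for $L\ll n$ and like $e^{-\Omega(L)}$ for $L\asymp n$. Each part is then bounded separately via a conditional run argument. Summing $\exp(-c'L^2/n)$ against the partition count $e^{O(\sqrt n)}$ shows that the tail $\lambda_1>R\sqrt n$ contributes $e^{(C-\eta(R))\sqrt n}$ with $\eta(R)>0$ for large $R$. Letting $R\to\infty$ slowly makes the Step 1 error $O(R)=o(\sqrt n)$ and completes both limits. Getting a clean multi-part version of this far-tail bound is where I expect most of the work.
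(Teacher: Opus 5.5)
Your lower bound (window $\lambda_1\le R\sqrt n$, quadratic comparison, Cauchy/Khintchine saddle, $R\to\infty$) is essentially the paper's. The upper bound, however, has a genuine gap, and the claim it rests on is false.

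\textbf{The tail above $R\sqrt n$ is not negligible.} For fixed $R$, the range $\lambda_1>R\sqrt n$ is not exponentially small; it carries essentially all of $B(n)$. Under the Khintchine measure at the saddle $c$, the expected number of parts exceeding $R\sqrt n$ is $(1+o(1))\sqrt n\int_R^\infty(e^{cx+x^2/2}-1)^{-1}\,dx\to\infty$, and the typical largest part is of order $\sqrt{n\log n}$. Put $J_R(a)=\int_0^R-\log(1-e^{-ax-x^2/2})\,dx$. The window sum satisfies $\log(B_R(n)/n!)\le(C_R+o(1))\sqrt n$, where $C_R=\min_{a>0}(a+J_R(a))\le c+J_R(c)<C$. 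Your own lower bound gives $\log(B(n)/n!)\ge(C-o(1))\sqrt n$. So no estimate $e^{(C-\eta(R))\sqrt n}$ for the tail can hold.

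The same computation shows that your third bullet in Step 2 is wrong. The event $\{\lambda_1\le R\sqrt n\}$ has probability $e^{-\Theta_R(\sqrt n)}$, not probability bounded below. The lower bound survives only because this loss vanishes as $R\to\infty$; the paper handles it cleanly by retilting to the truncated saddle $c_R$ and using $C_R\to C$.

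\textbf{Your tools cannot bound the intermediate range.} The upper bound actually needs only that the tail is at most $e^{(C+o(1))\sqrt n}$. Even that fails on $R\sqrt n<\lambda_1\lesssim n^{3/4}$ with your tools. The one-large-part bound $e^{-L^2/(2n)}$, multiplied by $p(n)=e^{(\pi\sqrt{2/3}+o(1))\sqrt n}$, drops below $e^{C\sqrt n}$ only when $L^2/(2n)>(\pi\sqrt{2/3}-C)\sqrt n\approx0.406\sqrt n$, i.e.\ only for $L\ge An^{3/4}$. Your proposed multi-part bound does not help either. Since $\lambda_i\le n$, $\min(\lambda_i^2/n,\lambda_i)=\lambda_i^2/n$ always, so it reads $\exp(-c'Q(\lambda)/n)$. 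Summing such a bound with $c'<1/2$ over $\lambda_1>R\sqrt n$ reproduces the constant of the quadratic model with coefficient $2c'<1$, which is strictly larger than $C$.

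\textbf{The missing idea.} You need a sharp weight bound on a growing window that reaches past $n^{3/4}$, with the error carried multiplicatively in $Q(\lambda)$ rather than additively. In Step 1 you bound the error by $\sum_i\lambda_i^3/n^2\le\lambda_1^2/n$, which is $o(\sqrt n)$ only for $\lambda_1=o(n^{3/4})$. Use instead $\sum_i\lambda_i^3\le\lambda_1Q(\lambda)$ and proceed as follows.
\begin{enumerate}
\item For $\lambda_1\le n^\beta$ with $3/4<\beta<1$, Lemma~\ref{lem:uniform-Pm} gives $\prod_iP_{\lambda_i}(t)\le t^n\exp(-(1-O(n^{\beta-1}))Q(\lambda)/(2t)+O(1))$ on $[n/2,2n]$.
\item The crude bound $|P_m(t)|\le(t+m)^m$ disposes of the signed $t$-tails.
\item A Laplace upper bound for $\int_0^\infty e^{-t}t^ne^{-\gamma Q/t}\,dt$ has correction $O(Q^2/n^3)=O(n^{\beta-1})\,Q/n$, again proportional to $Q$.
\item Together these yield $\AO(K_\lambda)/n!\le e^{o(\sqrt n)}\exp(-\eta_nQ(\lambda)/(2n))$ with $\eta_n\to1$, which is Lemma~\ref{lem:growing-window-paper1}.
\item Summing gives the quadratic model with coefficient $\eta_n$. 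Continuity of its constant at $\eta=1$ then gives $(C+o(1))\sqrt n$ on this window.
\end{enumerate}
Only on the complement $\{\lambda_1>n^\beta\}\subseteq\{\lambda_1\ge An^{3/4}\}$ is the one-large-part bound used (Theorem~\ref{thm:far-tail}). It gives $\limsup\le\pi\sqrt{2/3}-A^2/2$, and then one lets $A\to\infty$. The choice $\beta\in(3/4,1)$ is exactly what makes the sharp window and the crude far tail overlap. A split at $R\sqrt n$, even with $R\to\infty$ slowly, leaves the range up to order $n^{3/4}$ uncovered.
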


The proof is completed after the quadratic model and tail estimates below.

\section{The quadratic-energy partition model}

The first step in proving Theorem \ref{thm:partition-sums} is the partition
saddle itself.  The following theorem proves the weighted partition model
that produces exactly the constants in that theorem.

\begin{lemma}[local lower bound for the partition saddle]\label{lem:partition-local}
For the geometric model, fix compact intervals
\(a\in[a_0,a_1]\subset(0,\infty)\) and
\(\eta\in[\eta_0,\eta_1]\subset(0,\infty)\).  Let \(X_{k,n}\) be independent
geometric random variables with
\[
  \mathbb P(X_{k,n}=j)=(1-\rho_{k,n})\rho_{k,n}^j,\qquad
  \rho_{k,n}=\exp(-ak/\sqrt n-\eta k^2/(2n)),
\]
and set \(T_n=\sum_{k\geq1}kX_{k,n}\).  If
\(\mathbb E T_n=n+O(\sqrt n)\), then
\[
  \mathbb P(T_n=n)\geq n^{-3/4+o(1)}
\]
uniformly for \((a,\eta)\) in compact subsets satisfying the displayed mean
condition.  The same statement holds for the Bernoulli variables with
\(\mathbb P(X_{k,n}=1)=\rho_{k,n}/(1+\rho_{k,n})\), uniformly for \(a\) in
any fixed compact real interval.
\end{lemma}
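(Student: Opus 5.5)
Since only a lower bound $n^{-3/4+o(1)}$ is required, a sharp local limit theorem is not needed. The plan is Fourier inversion on the lattice, with a Gaussian central part and a crude but summable error elsewhere. First the moments. Writing $k=y\sqrt n$, the Riemann-sum approximations give
\[
  \mathbb E T_n=n\int_0^\infty\frac{y}{e^{ay+\eta y^2/2}-1}\,dy+O(\sqrt n),
  \qquad
  \sigma_n^2:=\operatorname{Var}T_n\asymp n^{3/2}.
\]
The variance is a sum of $k^2\rho_k/(1-\rho_k)^2$. Near $k=0$ each summand is of order $k^2\cdot n/k^2=n$, and there are order $\sqrt n$ relevant terms, which gives $n^{3/2}$. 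These estimates hold uniformly for $(a,\eta)$ in the compact set, because $a\ge a_0>0$ keeps $\rho_k$ away from $1$ at scale $k/\sqrt n$ (apart from the $k\to0$ singularity, which is integrable). The Bernoulli case is the same, and it is easier since the summands are bounded. The lower bound $n^{-3/4}$ is exactly $1/\sigma_n$, so the target is $\mathbb P(T_n=n)\gtrsim\sigma_n^{-1}\exp(-O((n-\mathbb ET_n)^2/\sigma_n^2))$. The hypothesis $\mathbb ET_n=n+O(\sqrt n)$ gives $(n-\mathbb E T_n)/\sigma_n=O(n^{-1/4})$, so the Gaussian factor is $\asymp1$.

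Next, write
\[
  \mathbb P(T_n=n)=\frac1{2\pi}\int_{-\pi}^{\pi}e^{-in\theta}\prod_k\varphi_k(\theta)\,d\theta,
\]
where $\varphi_k(\theta)=(1-\rho_k)/(1-\rho_ke^{ik\theta})$. Split the range into three arcs.

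\emph{Central arc} $|\theta|\le n^{-3/4}\omega_n$, with $\omega_n\to\infty$ slowly. Expand $\log\varphi_k$ to third order. The third cumulant is $\kappa_3\asymp\sum_k k^3\rho_k/(1-\rho_k)^3\asymp n^{2}$, so the cubic term is $O(n^2\theta^3)=O(n^{-1/4}\omega_n^3)=o(1)$. This arc therefore contributes $(1+o(1))(2\pi\sigma_n^2)^{-1/2}e^{-(n-\mathbb ET_n)^2/(2\sigma_n^2)}\asymp n^{-3/4}$.

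\emph{Intermediate arc} $n^{-3/4}\omega_n\le|\theta|\le\epsilon/\sqrt n$. Use $|\varphi_k|^{2}=1-\frac{2\rho_k(1-\cos k\theta)}{(1-\rho_k)^2+2\rho_k(1-\cos k\theta)}$. For $k\le 1/|\theta|$ this gives the bound $\exp(-c\sigma_n^2\theta^2)$, which is beyond every power once $\omega_n\to\infty$.

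\emph{Outer arc} $\epsilon/\sqrt n\le|\theta|\le\pi$. This is the Khintchine/Bender minor-arc estimate and is the main obstacle. Among $k\le A\sqrt n$ a positive proportion have $\|k\theta/2\pi\|\ge\delta$. For large $\theta$ this comes from equidistribution, and for $\theta$ near a rational $2\pi p/q$ with small $q$ it comes from $k$ not divisible by $q$. For each such $k$ we have $1-\rho_k\asymp k/\sqrt n$ bounded below, so the factor $|\varphi_k|\le1-c'$. Only a count of order $\sqrt n$ of those $k$ is available, which gives $\prod_k|\varphi_k|\le e^{-c\sqrt n}$.

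I would handle this arc first via the standard device of bounding $\sum_k(1-\cos k\theta)$ over $k\in[\sqrt n,2\sqrt n]$ from below by $c\sqrt n$ for all $\theta\in[\epsilon/\sqrt n,\pi]$. If $\theta\sqrt n\le\pi$, Taylor expansion of the cosine gives the bound. If $\theta\sqrt n\ge\pi$, the geometric-sum identity
\[
  \Bigl|\sum_{k\in I}e^{ik\theta}\Bigr|\le\frac1{|\sin(\theta/2)|}
\]
gives it, except near $\theta=\pm\pi$ and other points where $\sin(\theta/2)$ is small. Near $\theta=\pi$ the terms with odd $k$ give $1-\cos k\theta\approx2$. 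Summing the three arcs, the central arc dominates, which gives the uniform lower bound. For the Bernoulli model, $|1-\rho_k+\rho_ke^{ik\theta}|$ has the same structure and the same three-arc argument applies. Uniformity in $a$ over any compact real interval, negative values included, holds because the weight $e^{-\eta k^2/(2n)}$ keeps all the sums in the same form.
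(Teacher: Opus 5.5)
Your proposal is correct and follows the paper's proof essentially step for step. Both use Fourier inversion with three arcs: a Gaussian central arc at scale $n^{-3/4}$ (the paper takes a fixed cutoff $Kn^{-3/4}$ with $K$ large, where you use a growing $\omega_n n^{-3/4}$), the quadratic bound $\exp(-cn^{3/2}\theta^2)$ up to $\delta/\sqrt n$, and an $e^{-c\sqrt n}$ bound beyond that coming from a positive proportion of indices $k\in[b\sqrt n,2b\sqrt n]$ with $k\theta$ away from $2\pi\mathbb Z$. Your geometric-sum estimate supplies the detail the paper only asserts on the outer arc, and it already covers all of $\pi/\sqrt n\le|\theta|\le\pi$, because $\sin(\theta/2)$ is small only near $\theta=0$, not near $\pm\pi$; so the separate treatment of $\theta$ near $\pm\pi$ and near rationals with small denominator is unnecessary.
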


\begin{proof}
We give the geometric proof.  The tail
\(\sum_{k>R\sqrt n}kX_{k,n}\) has expectation
\(O(n e^{-\eta_0R^2/4})\), uniformly in \(n\), so the infinite array may be
truncated at \(R\sqrt n\) and then \(R\to\infty\).  The variance satisfies
\[
  \operatorname{Var}(T_n)
  =
  \sum_{k\geq1}\frac{k^2\rho_{k,n}}{(1-\rho_{k,n})^2}
  =
  \sigma^2(a,\eta)n^{3/2}+O(n),
\]
where \(\sigma^2(a,\eta)>0\) is the corresponding Riemann integral.  The
third absolute cumulant is \(O(n^2)\), by the estimate used in the proof of
Theorem \ref{thm:quadratic-model}.

The characteristic function is
\[
  \varphi_n(\theta)=
  \prod_{k\geq1}\frac{1-\rho_{k,n}}{1-\rho_{k,n}e^{ik\theta}} .
  \tag{4}
\]
Fix \(K>0\).  On \(|\theta|\leq K n^{-3/4}\), put
\(u=\theta n^{3/4}\).  The cumulant expansion gives
\[
  e^{-in\theta}\varphi_n(\theta)
  =
  \exp\!\left(
    iu\,\frac{\mathbb ET_n-n}{n^{3/4}}
    -\frac{\sigma^2(a,\eta)u^2}{2}+o(1)
  \right),
\]
uniformly for \(|u|\leq K\).  Since
\((\mathbb ET_n-n)/n^{3/4}=O(n^{-1/4})\), this central integral is
\[
  n^{-3/4}\int_{-K}^{K}\exp(-\sigma^2(a,\eta)u^2/2)\,du+o(n^{-3/4}).
\]
Choose \(K\) large enough that the Gaussian integral over \([-K,K]\) is
positive and bounded away from zero uniformly on the compact parameter set.
On the remaining Gaussian-scale arc
\(K n^{-3/4}\leq|\theta|\leq\delta/\sqrt n\), the quadratic term gives
\[
  |\varphi_n(\theta)|\leq
  \exp(-c n^{3/2}\theta^2),
\]
and its integral is at most
\[
  n^{-3/4}\int_{|u|\geq K}e^{-cu^2}\,du,
\]
which is made arbitrarily small compared with the central integral by the
choice of \(K\).  On \(|\theta|\geq\delta/\sqrt n\), a positive proportion of
the consecutive
indices \(k\in[b\sqrt n,2b\sqrt n]\) have \(\|k\theta/(2\pi)\|\geq c_\delta\);
for these indices \(\rho_{k,n}\) is bounded away from \(0\) and \(1\), and
(4) gives \(|\varphi_n(\theta)|\leq \exp(-c\sqrt n)\).  The lattice span is
one because every \(k\geq1\) has positive probability.  This proves the local
lower bound.  The
Bernoulli proof is identical with
\((1-\rho)/(1-\rho e^{ik\theta})\) replaced by
\((1+\rho e^{ik\theta})/(1+\rho)\).
\end{proof}

\begin{theorem}[quadratic-energy partitions]\label{thm:quadratic-model}
Let
\[
  Z_n=[u^n]\prod_{k\geq1}
  \frac{1}{1-\exp(-k^2/(2n))u^k}.
\]
Let \(c\) be the unique real solution of
\[
  \int_0^\infty \frac{x}{\exp(cx+x^2/2)-1}\,dx=1.
\]
Then
\[
  \log Z_n
  \sim
  \left(
  c+\int_0^\infty-\log(1-\exp(-cx-x^2/2))\,dx
  \right)\sqrt n.
\]

Similarly, let
\[
  Z_n^{\rm dist}=[u^n]\prod_{k\geq1}
  (1+\exp(-k^2/(2n))u^k).
\]
If \(c_d\) is the unique real solution of
\[
  \int_0^\infty \frac{x}{\exp(c_dx+x^2/2)+1}\,dx=1,
\]
then
\[
  \log Z_n^{\rm dist}
  \sim
  \left(
  c_d+\int_0^\infty\log(1+\exp(-c_dx-x^2/2))\,dx
  \right)\sqrt n.
\]
\end{theorem}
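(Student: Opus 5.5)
We prove the statement by a standard saddle-point upper bound combined with the local lower bound of Lemma \ref{lem:partition-local}. Write \(F_n(u)=\prod_{k\geq1}(1-e^{-k^2/(2n)}u^k)^{-1}\). For real \(0<u<1\) this product converges for every fixed \(n\), because the Gaussian weight makes the factors tend to \(1\) quickly. We use the scaling \(u=e^{-a/\sqrt n}\) with \(a>0\). Then
\[
  \log F_n(e^{-a/\sqrt n})=-\sum_{k\geq1}\log\bigl(1-\exp(-ak/\sqrt n-k^2/(2n))\bigr).
\]
This is a Riemann sum with mesh \(1/\sqrt n\) in \(x=k/\sqrt n\), so it equals \(\sqrt n\,G(a)+o(\sqrt n)\), where
\[
  G(a)=\int_0^\infty-\log(1-e^{-ax-x^2/2})\,dx.
\]
The integrand has only a logarithmic singularity at \(x=0\), and it decays like a Gaussian at infinity, so the Riemann-sum convergence is uniform for \(a\) in compact subsets of \((0,\infty)\). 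The small-\(k\) terms are \(O(\log n)\) each but only \(O(1)\) of them are singular, and a standard monotone comparison handles them. Differentiating under the integral gives \(G'(a)=-I(a)\), with \(I\) as in Lemma \ref{lem:partition-constants}.

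The upper bound is Chernoff. For every \(a>0\),
\[
  Z_n\leq u^{-n}F_n(u)=\exp\bigl(\sqrt n\,(a+G(a))+o(\sqrt n)\bigr).
\]
Since \(a+G(a)\) has derivative \(1-I(a)\), it is minimized exactly at \(a=c\), using the monotonicity of \(I\) from Lemma \ref{lem:partition-constants}. Taking \(a=c\) gives \(\limsup \log Z_n/\sqrt n\leq c+G(c)\).

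For the lower bound we write the exact identity
\[
  Z_n=u^{-n}F_n(u)\,\mathbb P(T_n=n),
\]
where \(T_n=\sum_k kX_{k,n}\) with the independent geometric variables of Lemma \ref{lem:partition-local}, taken at \(\eta=1\) and \(a\) near \(c\). The mean is
\[
  \mathbb E T_n=\sum_k\frac{k\rho_{k,n}}{1-\rho_{k,n}}=n\,I(a)+O(\sqrt n),
\]
again by a Riemann sum. At \(a=c\) this is \(n+O(\sqrt n)\), because \(I(c)=1\).

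This mean estimate is the main obstacle. The \(O(\sqrt n)\) error needs care near \(k\) small, where \(k\rho/(1-\rho)\approx \sqrt n/a\) per term. To get it, we compare the sum with its integral via Euler--Maclaurin on the smooth function \(x\mapsto x/(e^{ax+x^2/2}-1)\). This function is bounded near \(0\) with limit \(1/a\), so the discretization error is \(O(1)\) in \(x\)-units, hence \(O(\sqrt n)\) in \(T_n\)-units, and the mean hypothesis of Lemma \ref{lem:partition-local} holds. If a sharper centering were ever needed, we would tune \(a=a_n\to c\) by continuity and strict monotonicity of \(a\mapsto\mathbb E T_n\). Lemma \ref{lem:partition-local} then gives \(\mathbb P(T_n=n)\geq n^{-3/4+o(1)}=e^{o(\sqrt n)}\), so \(\liminf\log Z_n/\sqrt n\geq c+G(c)\).

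The distinct-part case follows the same steps. We replace the geometric variables by Bernoulli variables and set
\[
  G_d(a)=\int_0^\infty\log(1+e^{-ax-x^2/2})\,dx,
\]
so that \(G_d'(a)=-I_d(a)\). Here \(a\) ranges over all reals, which is allowed because the quadratic term keeps every Riemann sum convergent and makes the integrand uniformly bounded. Lemma \ref{lem:partition-local} is used in its Bernoulli form, with \(c_d\) from Lemma \ref{lem:partition-constants}.
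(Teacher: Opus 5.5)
Your proposal is correct and follows essentially the same route as the paper: the Riemann-sum asymptotics for \(\log F_n(e^{-a/\sqrt n})\), Cauchy's bound \(Z_n\le u_n^{-n}F_n(u_n)\) at \(a=c\) for the upper bound, and the Khintchine identity \(Z_n=u_n^{-n}F_n(u_n)\,\mathbb P(T_n=n)\) combined with Lemma~\ref{lem:partition-local} for the lower bound, with Bernoulli variables in the distinct case. Your Euler--Maclaurin argument for \(\mathbb E T_n=n+O(\sqrt n)\) fills in a step the paper only asserts; it uses that \(x/(e^{ax+x^2/2}-1)\) extends smoothly to \(x=0\) with value \(1/a\).
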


\begin{proof}
We give the unrestricted proof; the distinct-part proof is identical with
geometric variables replaced by Bernoulli variables.

For \(a\in\mathbb R\), put \(u=e^{-a/\sqrt n}\) and
\[
  F_n(u)=\prod_{k\geq1}
  \frac{1}{1-\exp(-k^2/(2n))u^k}.
\]
Then
\[
  \frac1{\sqrt n}\log F_n(e^{-a/\sqrt n})
  =
  \frac1{\sqrt n}\sum_{k\geq1}
  -\log\left(1-\exp\left(-a k/\sqrt n-k^2/(2n)\right)\right).
\]
This is a Riemann sum, uniformly for \(a\) in compact subsets of the domain
where the sum is finite, so
\[
  \frac1{\sqrt n}\log F_n(e^{-a/\sqrt n})
  \to
  J(a):=\int_0^\infty
  -\log(1-\exp(-ax-x^2/2))\,dx.          \tag{3}
\]
Moreover,
\[
  \frac{1}{n}\left(u\frac{d}{du}\log F_n(u)\right)_{u=e^{-a/\sqrt n}}
  =
  \frac1n\sum_{k\geq1}
  \frac{k}{\exp(a k/\sqrt n+k^2/(2n))-1}
\]
and the last expression tends to
\[
  I(a):=\int_0^\infty \frac{x}{\exp(ax+x^2/2)-1}\,dx.
\]
By Lemma \ref{lem:partition-constants}, there is a unique \(c\) with
\(I(c)=1\).

The coefficient upper bound follows from Cauchy's estimate at
\(u_n=e^{-c/\sqrt n}\):
\[
  Z_n\leq F_n(u_n)u_n^{-n}.
\]
The product \(F_n(u_n)\) is convergent, since its \(k\)-th ratio is
\[
  \exp\!\left(-\frac{k^2}{2n}-\frac{ck}{\sqrt n}\right)<1
\]
and these ratios are summable over \(k\).  Hence Cauchy's estimate applies.
Using (3), this gives
\[
  \limsup_{n\to\infty}\frac{\log Z_n}{\sqrt n}
  \leq J(c)+c.
\]

For the lower bound, use the standard Khintchine probabilistic
representation of weighted partitions.  Let \(X_{k,n}\) be independent
geometric random variables with
\[
  \mathbb P(X_{k,n}=j)=
  (1-w_{k,n}u_n^k)(w_{k,n}u_n^k)^j,\qquad
  w_{k,n}=e^{-k^2/(2n)}.
\]
Set \(T_n=\sum_{k\geq1}kX_{k,n}\).  Then
\[
  Z_n=F_n(u_n)u_n^{-n}\mathbb P(T_n=n).       \tag{4}
\]
The choice \(I(c)=1\) gives \(\mathbb E T_n=n+O(\sqrt n)\), so Lemma
\ref{lem:partition-local} gives
\(\mathbb P(T_n=n)\geq n^{-3/4+o(1)}\).
This factor is \(e^{o(\sqrt n)}\).  Combining this lower bound with (4) and
the Riemann-sum estimate (3) gives
\[
  \liminf_{n\to\infty}\frac{\log Z_n}{\sqrt n}\geq J(c)+c.
\]
The unrestricted formula follows.  For the distinct model one replaces
\(-\log(1-y)\), \(y/(1-y)\), and geometric variables by
\(\log(1+y)\), \(y/(1+y)\), and Bernoulli variables; the same argument gives
the stated constant, with uniqueness supplied by Lemma
\ref{lem:partition-constants}.
\end{proof}

\section{Truncated partition sums}

The preceding model becomes a theorem for the actual OEIS partition sums
after imposing the natural saddle-scale cutoff on the largest part.  This is
the part of Theorem \ref{thm:partition-sums} where the multipartite
orientation integral can be compared uniformly with the quadratic-energy
partition weight.

For a partition \(\lambda\vdash n\), put
\[
  Q(\lambda)=\sum_i\lambda_i^2.
\]
For fixed \(R>0\), let
\[
  B_R(n)=
  \sum_{\substack{\lambda\vdash n\\ \lambda_1\leq R\sqrt n}}
  \AO(K_\lambda),
\]
and let \(B_R^{\rm dist}(n)\) be the same sum restricted to partitions into
distinct parts.

\begin{lemma}[uniform quadratic comparison]\label{lem:quadratic-comparison}
For every fixed \(R>0\), uniformly over all partitions
\(\lambda\vdash n\) with \(\lambda_1\leq R\sqrt n\),
\[
  \frac{\AO(K_\lambda)}{n!}
  =
  \exp(O_R(1))\exp\!\left(-\frac{Q(\lambda)}{2n}\right).
\]
\end{lemma}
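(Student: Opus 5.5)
Starting from Proposition \ref{prop:integral}, we write
\[
  \frac{\AO(K_\lambda)}{n!}
  =
  \frac{1}{n!}\int_0^\infty e^{-t}t^{n}\prod_i\frac{P_{\lambda_i}(t)}{t^{\lambda_i}}\,dt
  =
  \mathbb E\prod_i\frac{P_{\lambda_i}(T)}{T^{\lambda_i}},
\]
where \(T\) is Gamma-distributed with shape \(n+1\). The strategy is to show that on a central window \(|T-n|\leq A\sqrt n\) each factor satisfies
\[
  \log\frac{P_m(T)}{T^m}=-\frac{m^2}{2n}+O\!\left(\frac{m}{n}+\frac{m^3}{n^2}\right)
\]
uniformly for \(1\leq m\leq R\sqrt n\). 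We then sum over parts and control the two Gamma tails separately. Note that \(\sum_i\lambda_i=n\) and \(\sum_i\lambda_i^3\leq\lambda_1Q(\lambda)\leq R\sqrt n\,Q(\lambda)\). Moreover \(Q(\lambda)\leq\lambda_1 n\leq Rn^{3/2}\), so \(\sum_i\lambda_i^3/n^2\leq R^2\). The error terms therefore total \(O_R(1)\).

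\textbf{Per-factor estimate.} For \(t\in[n/2,2n]\) and \(m\leq R\sqrt n\), I would apply the last part of Lemma \ref{lem:uniform-Pm}. It gives positivity of \(P_m(t)\) (this is the regime \(\beta<1\), e.g. \(\beta=3/4\) for large \(n\)) and the upper bound
\[
  \log\frac{P_m(t)}{t^m}\leq -\frac{m(m-1)}{2t}+O\!\left(\frac{m^3+m}{t^2}\right).
\]
The saddle argument described there, with \(z=m/t+O(m^2/t^2)\) and Gaussian width \(m^{-1/2}\), actually yields a two-sided asymptotic
\[
  \log\frac{P_m(t)}{t^m}=t(1-e^{-z})-m\log z+\log m!-m\log t+O(1/m),
\]
which I would expand to give the matching lower bound. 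The step requiring care is making the saddle estimate's relative error \(O(1/m)\) summable. Taken naively this is not summable: \(\sum_i 1/\lambda_i\) could be as large as \(n\). I would therefore avoid Stirling-type expansions for small parts and use the exact identity \(P_m(t)/t^m=m![z^m]\exp(z-z^2/(2t)+\cdots)\) directly. This gives
\[
  \frac{P_m(t)}{t^m}=1-\binom m2\frac1t+O\!\left(\frac{m^4}{t^2}\right)
\]
for \(m\leq t^{1/4}\), say, and the saddle estimate with its \(1+O(1/m)\) error is then applied only to the at most \(n/t^{1/4}\)-ish parts exceeding \(t^{1/4}\). This separation of small and large parts is the main obstacle I expect: the error must be arranged so that it is genuinely \(O(m/t+m^3/t^2)\) per part, not \(O(1)\) per part.

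\textbf{Central window.} Replacing \(1/t\) by \(1/n\) costs
\[
  \frac{m^2|t-n|}{n^2}\leq\frac{m^2A}{n^{3/2}},
\]
and summing over parts gives \(AQ(\lambda)/n^{3/2}\leq AR\). Hence on \(|T-n|\leq A\sqrt n\) the product equals \(\exp(O_{R,A}(1)-Q/(2n))\). Since this window has probability bounded below, we obtain the lower bound immediately. It also gives the upper bound on that window, and, using the one-sided inequality from Lemma \ref{lem:uniform-Pm}, the upper bound extends across all of \([n/2,2n]\).

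\textbf{Gamma tails.} For the upper bound over all \(T\), I would use monotonicity in \(t\) of the quadratic penalty. For \(T\in[n/2,2n]\), the one-sided bound gives at most
\[
  \exp\!\left(-\frac{Q}{2T}+\frac{n}{2T}+O_R(1)\right)\leq\exp\!\left(-\frac{Q}{4n}+O_R(1)\right).
\]
This is too weak when \(T\approx 2n\), but the Gamma density at \(|T-n|\geq A\sqrt n\) decays like \(e^{-(T-n)^2/(2n)}\). Combined with the exact penalty
\[
  -\frac{Q}{2T}=-\frac{Q}{2n}+\frac{Q(T-n)}{2nT},
\]
where \(Q/(2nT)\cdot(T-n)\leq R(T-n)/\sqrt n\), the tail integrates to \(O_R(1)e^{-Q/(2n)}\). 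Outside \([n/2,2n]\), the crude bound \(|P_m(t)|\leq(t+m)^m\) gives
\[
  \prod_i\frac{(t+\lambda_i)^{\lambda_i}}{t^{\lambda_i}}\leq e^{n\cdot\lambda_1/t},
\]
and the Gamma tail \(e^{-cn}\) dominates this since \(\lambda_1/t=O(n^{-1/2})\) for \(t\geq n/2\). For \(t<n/2\) we bound directly: \(\log(1+\lambda_1/t)\) is controlled by the Gamma weight \(t^ne^{-t}/n!\leq e^{-cn}\), and since \(\sum\lambda_i\log(1+\lambda_i/t)\leq n\log(1+\lambda_1/t)\), this is dominated for \(t\geq n^{2/3}\). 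For \(t<n^{2/3}\) we use \(t^n(1+\lambda_1/t)^n\leq(2R\sqrt n)^n\ll n!\,e^{-n}\). Together with \(Q/(2n)\leq R\sqrt n/2=o(n)\), this makes all the outer tails negligible.
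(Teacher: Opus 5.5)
\textbf{Gap: the per-part lower bound.} Your overall architecture is sound and is essentially the paper's. The paper sums a per-part expansion (5) with error \(O_R(m^3/t^2+m/t^2)\) to get \(\prod_iP_{\lambda_i}(t)=t^n\exp(-(Q(\lambda)-n)/(2t)+O_R(1))\) on \([n/2,2n]\). It then bounds the outer tails with \((t+m)^m\) and applies Laplace at \(t=n\). Your Gamma-expectation, window and Gaussian-tail bookkeeping is an equivalent repackaging, and your upper bound is fine if you rest it on the one-sided inequality of Lemma \ref{lem:uniform-Pm}. The gap is the matching per-part \emph{lower} bound that you need on the window. The paper simply asserts (5) from the saddle. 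You rightly flag that a raw \(1+O(1/m)\) saddle error is not summable, but your repair does not fix this. With a cut at \(M\), your small-part estimate certifies only \(\sum_{\lambda_i\le M}\lambda_i^4/t^2\lesssim M^3/n\), while the large parts cost \(\sum_{\lambda_i>M}1/\lambda_i\le n/M^2\). For \(M=t^{1/4}\) the latter is of order \(\sqrt n\) (take all parts of size about \(2n^{1/4}\)). No cut works, since you would need \(M\lesssim n^{1/3}\) and \(M\gtrsim n^{1/2}\) at once. An \(e^{O(\sqrt n)}\) uncertainty is fatal, because the lemma is used precisely at the \(\sqrt n\) logarithmic scale in Theorem \ref{thm:truncated-partition}. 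Your displayed saddle formula also omits the Gaussian term \(-\tfrac12\log(2\pi m(1-z))\). With that term restored, the leftover discrepancy is the genuine \(\Theta(1/m)\) Stirling-type bias, present even as \(t\to\infty\), and this is exactly what must cancel.

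\textbf{What is missing, and one way to supply it.} You need a per-part error that vanishes as \(m/t\to0\), not merely as \(1/m\to0\). Either show that the \(1/m\) corrections cancel against those of the unperturbed coefficient \(m![z^m]e^z=1\), or bypass the saddle. A clean route is real-rootedness. Write \(P_m(t)=(-1)^mB_m(-t)\), where \(B_m(x)=\sum_jS(m,j)x^j\) has only real nonpositive zeros (Harper's theorem). Then \(P_m(t)=\prod_{j=1}^m(t-\rho_j)\) with \(\rho_j\ge0\), \(\sum_j\rho_j=\binom m2\), and \(\sum_j\rho_j^2=\binom m2^2-2S(m,m-2)=m(m-1)(4m-5)/6\). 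Hence \(\max_j\rho_j\le m^{3/2}\le R^{3/2}n^{3/4}\le t/2\) for \(t\ge n/2\) and large \(n\). Since \(-x-x^2\le\log(1-x)\le-x\) on \([0,1/2]\), this gives
\[
  -\binom m2\frac1t-\frac{m(m-1)(4m-5)}{6t^2}
  \le\log\frac{P_m(t)}{t^m}\le-\binom m2\frac1t
\]
uniformly in \(1\le m\le R\sqrt n\), with positivity for free. Summing over the parts gives your window estimate with an \(O_R(1)\) error. The bound \(0<P_m(t)\le t^m\) for \(t\ge n/2\) also makes the upper Gamma tail trivial. A minor slip: for \(t<n^{2/3}\) the correct bound is \(t+\lambda_1\le 2n^{2/3}\), not \(2R\sqrt n\); the conclusion is unaffected.
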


\begin{proof}
Use the exponential generating function
\[
  \sum_{m\geq0}P_m(t)\frac{z^m}{m!}
  =
  \exp(t(1-e^{-z})).
\]
The saddle for the coefficient of \(z^m\), with \(m\leq R\sqrt n\) and
\(t\asymp n\), lies at \(z=m/t+O_R(n^{-1})\).  Expanding
\(1-e^{-z}=z-z^2/2+O_R(z^3)\) at this saddle gives the uniform
Stirling-transform estimate
\[
  \log P_m(t)
  =
  m\log t-\frac{m(m-1)}{2t}
  +O_R\!\left(\frac{m^3}{t^2}+\frac{m}{t^2}\right)          \tag{5}
\]
for \(m\leq R\sqrt n\) and \(t\in[n/2,2n]\).  The same saddle estimate also
shows that \(P_m(t)>0\) in this range for all sufficiently large \(n\).

For \(\lambda_1\leq R\sqrt n\), summing (5) over the parts gives
\[
  \prod_i P_{\lambda_i}(t)
  =
  t^n
  \exp\!\left(
    -\frac{Q(\lambda)-n}{2t}+O_R(1)
  \right),
\]
because
\[
  \sum_i\lambda_i^3\leq R\sqrt n\sum_i\lambda_i^2\leq R^2n^2.
\]
This estimate is intentionally coarse but sufficient: after division by
\(t^2\asymp n^2\), it contributes only to the \(O_R(1)\) error in the
exponent.
Substitution in Proposition \ref{prop:integral} reduces the integral to
\[
  \exp(O_R(1))
  \int_0^\infty e^{-t}t^n
  \exp\!\left(-\frac{Q(\lambda)-n}{2t}\right)dt.           \tag{6}
\]
The tails outside \(t\in[n/2,2n]\) are exponentially smaller than the central
Gamma mass.  We use two absolute estimates.  On the lower tail, Lemma
\ref{lem:uniform-Pm} gives
\[
  |P_m(t)|\leq (t+m)^m\leq (t+R\sqrt n)^m .
\]
Multiplying over the parts yields
\[
  \prod_i |P_{\lambda_i}(t)|\leq (t+R\sqrt n)^n .
\]
For \(0\leq t\leq n/2\), the exponent
\(-t+n\log(t+R\sqrt n)\) is smaller than
\(-n+n\log n\), the central Gamma exponent, by \(\Omega_R(n)\).  On the
upper tail \(t\geq2n\), the same crude bound gives
\[
  (t+R\sqrt n)^n
  \leq
  t^n\exp(O_R(\sqrt n)),
\]
and the usual upper Gamma tail is still exponentially small.  The possible
signs of \(P_m(t)\) outside
the central interval are irrelevant here because for each tail we bound the
absolute value of its contribution to the signed integral by
\[
  \int_{\rm tail} e^{-t}\prod_i |P_{\lambda_i}(t)|\,dt .
\]

In (6), \(Q(\lambda)\leq Rn^{3/2}\).  A one-dimensional Laplace expansion,
or simply Taylor expansion around the Gamma saddle \(t=n\), gives
\[
  \int_0^\infty e^{-t}t^n
  \exp\!\left(-\frac{Q(\lambda)-n}{2t}\right)dt
  =
  \Gamma(n+1)
  \exp\!\left(-\frac{Q(\lambda)-n}{2n}+O_R(1)\right).
\]
Dividing by \(n!\) and absorbing the factor \(e^{1/2}\) into
\(\exp(O_R(1))\) proves the lemma.
\end{proof}

\begin{theorem}[truncated A372395 and A370613 sums]\label{thm:truncated-partition}
For \(R>0\), let \(c_R\) be the unique solution of
\[
  \int_0^R \frac{x}{\exp(c_Rx+x^2/2)-1}\,dx=1,
\]
and put
\[
  C_R=c_R+\int_0^R-\log(1-\exp(-c_Rx-x^2/2))\,dx .
\]
Then
\[
  \log\frac{B_R(n)}{n!}\sim C_R\sqrt n .
\]

For the distinct-part sum, assume \(R>\sqrt2\).  Let \(c_{R,d}\) be the
unique solution of
\[
  \int_0^R \frac{x}{\exp(c_{R,d}x+x^2/2)+1}\,dx=1,
\]
and put
\[
  C_{R,d}=
  c_{R,d}+\int_0^R\log(1+\exp(-c_{R,d}x-x^2/2))\,dx .
\]
Then
\[
  \log\frac{B_R^{\rm dist}(n)}{n!}\sim C_{R,d}\sqrt n .
\]
Moreover \(C_R\to C\) and \(C_{R,d}\to C_d\), where \(C,C_d\) are the
constants in Theorem \ref{thm:partition-sums}.  Consequently,
\[
  \liminf_{n\to\infty}\frac1{\sqrt n}\log\frac{A372395(n)}{n!}\geq C,
  \qquad
  \liminf_{n\to\infty}\frac1{\sqrt n}\log\frac{A370613(n)}{n!}\geq C_d .
\]
\end{theorem}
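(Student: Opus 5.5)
The plan is to reduce the truncated sums to a truncated version of the quadratic-energy model of Theorem \ref{thm:quadratic-model}, using Lemma \ref{lem:quadratic-comparison}. That lemma gives, uniformly over \(\lambda\vdash n\) with \(\lambda_1\leq R\sqrt n\),
\[
  \frac{\AO(K_\lambda)}{n!}=\exp(O_R(1))\exp\!\left(-\frac{Q(\lambda)}{2n}\right).
\]
Summing over these partitions yields
\[
  \frac{B_R(n)}{n!}=\exp(O_R(1))\,Z_{n,R},
  \qquad
  Z_{n,R}=[u^n]\prod_{1\leq k\leq R\sqrt n}\frac1{1-e^{-k^2/(2n)}u^k},
\]
and similarly for distinct parts with factors \((1+e^{-k^2/(2n)}u^k)\). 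The factor \(\exp(O_R(1))\) is \(e^{o(\sqrt n)}\). So it suffices to prove \(\log Z_{n,R}\sim C_R\sqrt n\), and the analogous statement for the distinct product.

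For \(Z_{n,R}\) I will rerun the proof of Theorem \ref{thm:quadratic-model} with a finite product.

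\textbf{Saddle constant.} The Riemann sums now converge to \(J_R(a)=\int_0^R-\log(1-e^{-ax-x^2/2})\,dx\) and \(I_R(a)=\int_0^R x/(e^{ax+x^2/2}-1)\,dx\). Because the product is finite, \(a\) may be any real number with \(ax+x^2/2>0\) on \((0,R]\), and in particular any \(a>0\). The equation \(I_R(c_R)=1\) has a unique solution by the monotonicity argument of Lemma \ref{lem:partition-constants}:
\begin{itemize}
\item \(I_R\) is strictly decreasing;
\item \(I_R(a)\to\infty\) as \(a\downarrow0\), by the same logarithmic divergence at \(x=0\);
\item \(I_R(a)\to0\) as \(a\to\infty\).
\end{itemize}
In the distinct case, \(I_{R,d}\) is decreasing on all of \(\mathbb R\) with \(I_{R,d}(+\infty)=0\) and \(I_{R,d}(-\infty)=\int_0^R x\,dx=R^2/2\). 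A solution of \(I_{R,d}=1\) therefore exists exactly when \(R^2/2>1\), which is where the hypothesis \(R>\sqrt2\) enters.

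\textbf{Upper bound.} Cauchy's estimate at \(u_n=e^{-c_R/\sqrt n}\) gives \(\limsup \log Z_{n,R}/\sqrt n\leq J_R(c_R)+c_R=C_R\).

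\textbf{Lower bound.} This is the main obstacle: a local limit estimate \(\mathbb P(T_n=n)\geq e^{-o(\sqrt n)}\) for the truncated Khintchine array, with \(\mathbb ET_n=n+O(\sqrt n)\). I would adapt Lemma \ref{lem:partition-local} with the index range \(k\leq R\sqrt n\).
\begin{itemize}
\item The truncation step is no longer needed.
\item The variance is still of order \(n^{3/2}\), given by a Riemann integral over \([0,R]\).
\item The major-arc Gaussian analysis is unchanged.
\item On the minor arcs, the block \(k\in[b\sqrt n,2b\sqrt n]\) must lie in \([1,R\sqrt n]\); take \(b<R/2\).
\item The lattice span is still one because \(k=1\) is present.
\end{itemize}
In the distinct case with \(c_{R,d}\) possibly negative, the Bernoulli parameters \(\rho_{k,n}/(1+\rho_{k,n})\) are still bounded away from \(0\) and \(1\) on that block, so the same argument applies. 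Only the crude bound \(n^{-O(1)}\) is needed. As a fallback, an elementary lower bound works: fix a single partition near the mode, or use a Chebyshev concentration plus a combinatorial smoothing step that exchanges parts \(1\) to hit \(n\) exactly, at cost \(e^{o(\sqrt n)}\).

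\textbf{Limits and corollary.} For \(C_R\to C\) and \(C_{R,d}\to C_d\):
\begin{itemize}
\item \(I_R\to I\) and \(J_R\to J\) locally uniformly near \(c\), because the integrands decay like \(e^{-x^2/2}\).
\item \(I\) has nonzero derivative at \(c\), so \(c_R\to c\), and then \(C_R\to J(c)+c=C\).
\item The distinct case is identical, using \(I_d'(c_d)<0\).
\end{itemize}
Finally, \(\AO(K_\lambda)\geq0\) for every \(\lambda\), so \(A372395(n)\geq B_R(n)\) and \(A370613(n)\geq B^{\rm dist}_R(n)\). Hence \(\liminf \frac{1}{\sqrt n}\log(A372395(n)/n!)\geq C_R\) for every \(R\), and letting \(R\to\infty\) gives the bound \(C\); the distinct-part bound \(C_d\) follows the same way.
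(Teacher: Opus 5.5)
Your proposal is correct and follows essentially the same route as the paper: you use Lemma \ref{lem:quadratic-comparison} to replace $B_R(n)/n!$ by the truncated quadratic-energy coefficient, rerun the Cauchy upper bound and the Khintchine/local-limit lower bound of Theorem \ref{thm:quadratic-model} with Riemann sums on $[0,R]$, use the feasibility condition $R^2/2>1$ in the distinct case, show $C_R\to C$, and get the liminf bounds from $\AO(K_\lambda)\geq0$. Your version adds detail the paper only asserts (existence and uniqueness of $c_R$ and $c_{R,d}$, the minor-arc block with $b<R/2$, a continuity argument in place of the paper's ``monotone convergence''); the one thing to drop is the fallback of fixing a single partition near the mode, which cannot work because every term $\exp(-Q(\lambda)/(2n))$ of the truncated sum is at most $1$, so no single partition can produce the growth $e^{C_R\sqrt n}$ with $C_R>0$.
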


\begin{proof}
By Lemma \ref{lem:quadratic-comparison},
\[
  \frac{B_R(n)}{n!}
  =
  \exp(O_R(1))
  \sum_{\substack{\lambda\vdash n\\ \lambda_1\leq R\sqrt n}}
  \exp\!\left(-\frac{Q(\lambda)}{2n}\right).
\]
The last sum is the coefficient
\[
  [u^n]\prod_{1\leq k\leq R\sqrt n}
  \frac{1}{1-\exp(-k^2/(2n))u^k},
\]
up to the same factor \(\exp(O_R(1))\).  This factor is negligible on the
\(\sqrt n\) logarithmic scale.

The proof of Theorem \ref{thm:quadratic-model} applies with the Riemann sums
truncated to \(0\leq x\leq R\).  The truncated triangular array still has
variance \(\Theta(n^{3/2})\); after choosing the saddle \(c_R\), its variance
constant is
\[
  \int_0^R
  \frac{x^2\exp(c_Rx+x^2/2)}
       {(\exp(c_Rx+x^2/2)-1)^2}\,dx>0.
\]
Thus the same local estimate contributes only \(e^{o(\sqrt n)}\).  The
coefficient has
logarithm \(C_R\sqrt n+o(\sqrt n)\), where the saddle equation is the
displayed equation defining \(c_R\).  The distinct-part proof is identical
with the truncated Fermi product
\[
  [u^n]\prod_{1\leq k\leq R\sqrt n}
  (1+\exp(-k^2/(2n))u^k).
\]
The condition \(R>\sqrt2\) is exactly the feasibility condition
\(\int_0^R x\,dx>1\) for the limiting distinct-part saddle.

Finally, monotone convergence gives \(C_R\to C\) and \(C_{R,d}\to C_d\).
Since \(B_R(n)\leq A372395(n)\) and
\(B_R^{\rm dist}(n)\leq A370613(n)\), letting \(R\to\infty\) in the
truncated lower bounds gives the two displayed liminf estimates.
\end{proof}

\begin{theorem}[far-tail upper bound]\label{thm:far-tail}
For \(A>0\), put
\[
  U_A(n)=
  \sum_{\substack{\lambda\vdash n\\ \lambda_1\geq A n^{3/4}}}
  \AO(K_\lambda)
\]
and define \(U_A^{\rm dist}(n)\) analogously with the sum restricted to
distinct-part partitions.  Then
\[
  \limsup_{n\to\infty}
  \frac1{\sqrt n}\log\frac{U_A(n)}{n!}
  \leq
  \pi\sqrt{\frac23}-\frac{A^2}{2},
\]
and
\[
  \limsup_{n\to\infty}
  \frac1{\sqrt n}\log\frac{U_A^{\rm dist}(n)}{n!}
  \leq
  \frac{\pi}{\sqrt3}-\frac{A^2}{2}.
\]
\end{theorem}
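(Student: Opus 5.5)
The plan is to bound each summand uniformly by the one-large-part extremal graph, and then multiply by the number of partitions. Fix \(\lambda\vdash n\) with \(L=\lambda_1\geq An^{3/4}\). Refining every part other than the first into singletons only adds edges. Indeed, \(K_\lambda\) is a spanning subgraph of \(K_{L,1,\ldots,1}\) with \(n-L\) singleton parts, because two vertices in the same non-largest part become adjacent after the refinement, and no edge is lost. Lemma \ref{lem:edge-monotonicity} and Lemma \ref{lem:one-large-part} then give
\[
  \frac{\AO(K_\lambda)}{n!}\leq \frac{(n-L)!\,(n-L+1)^L}{n!}
  =\prod_{i=0}^{L-1}\frac{n-L+1}{n-i}.
\]

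Next I bound this product explicitly. Writing the \(i\)-th factor as \(1-(L-1-i)/(n-i)\) and using \(\log(1-x)\leq -x\) together with \(n-i\leq n\), its logarithm is at most
\[
  -\sum_{i=0}^{L-1}\frac{L-1-i}{n}=-\frac{L(L-1)}{2n}.
\]
The right-hand side is nonincreasing in \(L\geq1\). Hence, for every \(\lambda\) counted in \(U_A(n)\),
\[
  \frac{\AO(K_\lambda)}{n!}\leq \exp\!\left(-\frac{L_0(L_0-1)}{2n}\right),\qquad L_0=\lceil An^{3/4}\rceil.
\]
Since \(L_0(L_0-1)/(2n)\geq \tfrac{A^2}{2}\sqrt n-O(n^{-1/4})\), this is \(\exp(-\tfrac{A^2}{2}\sqrt n+o(1))\), uniformly over all such \(\lambda\).

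Finally I sum over the admissible partitions. The number of partitions of \(n\) is \(p(n)=\exp(\pi\sqrt{2n/3}\,(1+o(1)))\) by Hardy--Ramanujan. The number of partitions into distinct parts is \(q(n)=\exp(\pi\sqrt{n/3}\,(1+o(1)))\). Therefore
\[
  \frac{U_A(n)}{n!}\leq p(n)\,e^{-A^2\sqrt n/2+o(1)},\qquad
  \frac{U_A^{\rm dist}(n)}{n!}\leq q(n)\,e^{-A^2\sqrt n/2+o(1)}.
\]
Taking \(\tfrac1{\sqrt n}\log\) and the \(\limsup\) gives both displayed bounds.

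There is no real analytic obstacle here, since everything is elementary once the domination by \(K_{L,1,\ldots,1}\) is in place. The step needing the most care is that domination itself. One has to check the direction of the spanning-subgraph relation: merging parts deletes edges, so \(K_\lambda\subseteq K_{L,1,\ldots,1}\), not the reverse. One also has to check that the product inequality is applied with \(L\) possibly as large as \(n\), where the bound remains valid because \(L=n\) gives the factor \(1/n!\).
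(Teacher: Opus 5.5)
Your proof is correct and follows the paper's own route: you dominate \(K_\lambda\) by \(K_{L,1,\ldots,1}\) via Lemma~\ref{lem:edge-monotonicity}, evaluate it with Lemma~\ref{lem:one-large-part}, and multiply by the Hardy--Ramanujan counts \(p(n)\) and \(q(n)\). The only difference is that your exact inequality \(\log\bigl((n-L)!(n-L+1)^L/n!\bigr)\leq -L(L-1)/(2n)\), which holds and is monotone for all \(1\leq L\leq n\), gives the uniformity over \(L\geq An^{3/4}\) directly; the paper instead patches together an asymptotic expansion for \(L=o(n)\) and a separate bound for \(L\geq\varepsilon n\), so your version is slightly cleaner.
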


\begin{proof}
Let \(L=\lambda_1\).  Splitting every part other than the largest into
singletons changes vertices that were previously in the same independent
part into vertices in different parts, and therefore adds precisely the
edges among those vertices while leaving all existing edges in place.  Thus
\(K_\lambda\) is a spanning subgraph of \(K_{L,1,\ldots,1}\).  Lemma
\ref{lem:edge-monotonicity} therefore gives
\[
  \AO(K_\lambda)\leq \AO(K_{L,1,\ldots,1}),
\]
where there are \(n-L\) singleton parts.

By Lemma \ref{lem:one-large-part},
\[
  \AO(K_{L,1,\ldots,1})=(n-L)!(n-L+1)^L.
\]
Consequently
\[
  \log\frac{\AO(K_\lambda)}{n!}
  \leq
  L\log(n-L+1)+\log((n-L)!)-\log(n!).
\]
Uniformly for \(L=o(n)\), the right side is
\[
  -\frac{L^2}{2n}+O\!\left(\frac{L^3}{n^2}+\frac{L}{n}\right),
\]
while for \(L\geq \varepsilon n\) it is \(-\Omega_\varepsilon(n)\).  Thus,
uniformly over \(L\geq A n^{3/4}\),
\[
  \frac{\AO(K_\lambda)}{n!}
  \leq
  \exp\!\left(-\left(\frac{A^2}{2}+o(1)\right)\sqrt n\right).
\]
The Hardy--Ramanujan estimate
\(\log p(n)\sim \pi\sqrt{2n/3}\) \cite{hardyramanujan1918,andrews1998},
multiplied with
the preceding bound, proves the first inequality.  The distinct-part
inequality follows in the same way from
\(\log q(n)\sim \pi\sqrt{n/3}\), where \(q(n)\) is the number of partitions
of \(n\) into distinct parts.
\end{proof}

\begin{lemma}[Laplace bound with quadratic penalty]\label{lem:penalized-gamma}
Fix \(0<\beta<1\).  Uniformly for \(0\leq Q\leq n^{1+\beta}\) and for
\(\gamma\) in a fixed compact subinterval of \((0,\infty)\),
\[
  \int_0^\infty e^{-t}t^n\exp\!\left(-\frac{\gamma Q}{t}\right)dt
  \leq
  n!\,
  \exp\!\left(
    -\frac{\gamma Q}{n}
    +O\!\left(n^{\beta-1}\frac{Q}{n}+\log n\right)
  \right).
\]
\end{lemma}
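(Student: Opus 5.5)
We split the integral according to the position of $t$ relative to the Gamma saddle $t=n$, and bound the penalized factor there. Write
\[
  I(Q)=\int_0^\infty e^{-t}t^n\exp\!\left(-\frac{\gamma Q}{t}\right)dt ,
\]
and fix a window $W=\{|t-n|\leq n^{(1+\beta)/2}\sqrt{\log n}\,\}$, or more simply $|t-n|\leq n^{\beta'}$ with some $\beta'\in(1/2,1)$ chosen below. Outside $W$ we drop the penalty, since $\exp(-\gamma Q/t)\leq1$. Within $W$ we bound the penalty by its worst value over the window.

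Outside the window, the standard Gamma concentration bound gives
\[
  \int_{|t-n|>n^{\beta'}}e^{-t}t^n\,dt\leq n!\exp(-c\,n^{2\beta'-1}).
\]
We need this to be at most $n!\exp(-\gamma Q/n-O(\cdot))$. This holds provided
\[
  \frac{\gamma Q}{n}\leq \frac{c}{2}\,n^{2\beta'-1}+O\!\left(n^{\beta-1}\frac{Q}{n}\right),
\]
which is where the error term is used. Since $Q/n\leq n^{\beta}$, choosing $\beta'$ with $2\beta'-1>\beta$ makes $c\,n^{2\beta'-1}$ dominate $\gamma Q/n$ for large $n$. So we take $\beta'=(1+\beta)/2+\epsilon$, which is still less than $1$ for small $\epsilon>0$.

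Inside the window the bound deteriorates. There $1/t\geq (1/n)(1-O(n^{\beta'-1}))$, so
\[
  \exp\!\left(-\frac{\gamma Q}{t}\right)\leq\exp\!\left(-\frac{\gamma Q}{n}+O\!\left(n^{\beta'-1}\frac{Q}{n}\right)\right).
\]
The relative error is now $n^{\beta'-1}$, which is worse than the target $n^{\beta-1}$, because $\beta'>(1+\beta)/2>\beta$. The crude window is therefore too wide, and the inner region has to be handled more carefully.

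For the inner region we use a genuine Laplace argument. Write $t=n(1+v)$. The exponent is
\[
  n\log n - n + n\bigl(\log(1+v)-v\bigr)-\frac{\gamma Q}{n}(1+v)^{-1}.
\]
We bound $-(1+v)^{-1}\leq -1+v$ for $v>-1$; this holds because $(1+v)^{-1}\geq 1-v$. The exponent is then at most
\[
  n\log n-n-\frac{\gamma Q}{n}+\frac{\gamma Q}{n}v-\frac{nv^2}{2}+O(n|v|^3).
\]
Completing the square in $v$ gives a maximal gain of
\[
  \frac{(\gamma Q/n)^2}{2n}=\frac{\gamma^2 Q}{2n}\cdot\frac{Q}{n^2}\leq O\!\left(n^{\beta-1}\frac{Q}{n}\right),
\]
using $Q\leq n^{1+\beta}$. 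This is exactly the allowed error. The shifted maximiser $v^\ast=\gamma Q/n^2=O(n^{\beta-1})$ is small.

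To control the cubic remainder we restrict to $|v|\leq n^{\beta'-1}$ and refine the choice of $\beta'$. The cubic error on this range is $n^{3\beta'-2}$. This is bounded provided $\beta'\leq2/3$, which forces $\beta<1/3$. In general we avoid this restriction by using the exact concavity of $\log(1+v)-v$ rather than its Taylor expansion. The Gaussian integral against the $n\,dv$ measure contributes at most $O(\sqrt n)$. Dividing by Stirling's formula, $n!\asymp\sqrt n\,n^ne^{-n}$, this prefactor is absorbed into $O(\log n)$. We expect the main obstacle to be the bookkeeping that keeps the completed-square gain at order $n^{\beta-1}Q/n$ uniformly, without relying on the cubic term.
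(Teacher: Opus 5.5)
The missing step is the bound on the remaining integral after your tangent-line inequality. The inequality itself is correct and is the right idea: \((1+v)^{-1}\ge 1-v\) says that the concave function \(t\mapsto-\gamma Q/t\) lies below its tangent at \(t=n\), i.e. \(-\gamma Q/t\le -2a+at/n\) for all \(t>0\), with \(a=\gamma Q/n\). What you do next does not close the argument. You Taylor-expand \(n(\log(1+v)-v)\) and treat the \(O(n|v|^3)\) remainder as an additive error on \(|v|\le n^{\beta'-1}\). That costs a factor \(\exp(O(n^{3\beta'-2}))\). Your outer-region step forces \(\beta'>(1+\beta)/2\), so for \(\beta\ge1/3\) this loss is the exponential of a positive power of \(n\). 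That is not \(O(n^{\beta-1}Q/n+\log n)\) (take \(Q\) bounded). The appeal to ``exact concavity'' is never carried out, and you flag the remaining bookkeeping as the main obstacle. As written, the argument therefore proves the lemma only for \(\beta<1/3\). But the lemma is used in Lemma~\ref{lem:growing-window-paper1} with \(3/4<\beta<1\).

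The obstacle is not real, and the fix stays inside your setup. On the window, \(\log(1+v)-v\le -v^2/2+v^3/3\le-(1-o(1))v^2/2\). So the cubic term only rescales the quadratic coefficient by \(1+o(1)\), and completing the square still gives a gain \(O(a^2/n)\). Simpler still, drop the window and the Gamma-concentration step and integrate your tangent-line bound exactly. For large \(n\) one has \(a/n\le\gamma n^{\beta-1}<1/2\), and
\[
  \int_0^\infty e^{-t}t^n e^{-\gamma Q/t}\,dt
  \le e^{-2a}\int_0^\infty e^{-(1-a/n)t}t^n\,dt
  = e^{-2a}\,n!\,\Bigl(1-\frac an\Bigr)^{-(n+1)}
  = n!\exp\!\Bigl(-a+O\Bigl(\frac an+\frac{a^2}{n}\Bigr)\Bigr).
\]
Since \(Q\le n^{1+\beta}\), \(a/n+a^2/n=O(n^{\beta-1}Q/n)\). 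This gives the lemma, even without the \(\log n\) term.

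The paper argues differently. It maximizes the full concave exponent \(-t+n\log t-\gamma Q/t\) at \(t_q=(n+\sqrt{n^2+4\gamma Q})/2\) and finds the maximum value \(n\log n-n-\gamma Q/n+O(Q^2/n^3)\). It then uses concavity and a second-derivative bound of order \(1/n\) to get a Laplace upper bound that loses \(O(\log n)\). Your route, once completed as above, needs neither the saddle location nor any Laplace analysis.
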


\begin{proof}
Let \(q=\gamma Q\) and
\[
  f_q(t)=-t+n\log t-\frac{q}{t}.
\]
The maximum is attained at
\[
  t_q=\frac{n+\sqrt{n^2+4q}}2
  =
  n+\frac{q}{n}+O(q^2/n^3),
\]
because \(q\leq O(n^{1+\beta})=o(n^2)\).  Substitution gives
\[
  f_q(t_q)
  =
  -n+n\log n-\frac{q}{n}
  +O(q^2/n^3).
\]
Since \(q^2/n^3=O(n^{\beta-1}q/n)\), Laplace's elementary upper bound
gives only a polynomial loss.  Indeed,
\[
  -f_q''(t_q)=\frac{n}{t_q^2}+\frac{2q}{t_q^3}\asymp \frac1n,
\]
because \(t_q=n+o(n)\).  On \([t_q/2,2t_q]\), Taylor's theorem and
concavity bound the integral by \(O(\sqrt n)\exp(f_q(t_q))\), while outside
this interval the drop from the maximum is \(\Omega(n)\).  Therefore
\(\int e^{f_q(t)}dt\leq \exp(f_q(t_q)+O(\log n))\), and Stirling's formula
for \(n!\) gives the stated estimate.  The polynomial factor is absorbed by
the \(O(\log n)\) term.
\end{proof}

\begin{lemma}[growing largest-part window]\label{lem:growing-window-paper1}
Let \(M_n=n^\beta\) with \(3/4<\beta<1\).  Uniformly over partitions
\(\lambda\vdash n\) with \(\lambda_1\leq M_n\),
\[
  \frac{\AO(K_\lambda)}{n!}
  \leq
  \exp(o(\sqrt n))
  \exp\!\left(-\eta_n\frac{Q(\lambda)}{2n}\right),
\]
where \(Q(\lambda)=\sum_i\lambda_i^2\) and
\[
  \eta_n=1-O(n^{\beta-1}+n^{-1/2}\log n).
\]
Here \(\beta\) is fixed before \(n\to\infty\), so \(\eta_n\to1\).  The same
estimate holds for distinct-part partitions.
\end{lemma}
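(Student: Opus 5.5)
The plan is to follow the proof of Lemma \ref{lem:quadratic-comparison}, keeping the error terms explicit instead of absorbing them into \(O_R(1)\), and to finish with the penalized Laplace bound of Lemma \ref{lem:penalized-gamma} in place of an exact Gamma expansion. Start from Proposition \ref{prop:integral},
\[
  \AO(K_\lambda)=\int_0^\infty e^{-t}\prod_iP_{\lambda_i}(t)\,dt,
\]
and split the \(t\)-range into the central interval \([n/2,2n]\) and the two tails.

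\emph{Central range.} Use the last part of Lemma \ref{lem:uniform-Pm}. Since \(m\leq M_n=n^\beta\) with \(\beta<1\), we have \(P_m(t)>0\) there for large \(n\), and
\[
  \log\frac{P_m(t)}{t^m}\leq-\frac{m(m-1)}{2t}+O\!\left(\frac{m^3+m}{t^2}\right).
\]
Sum this over the parts. We have \(\sum_i\lambda_i^3\leq M_nQ(\lambda)\) and \(\sum_i\lambda_i=n\), so for \(t\asymp n\)
\[
  \prod_iP_{\lambda_i}(t)\leq t^n\exp\!\left(-\frac{Q(\lambda)}{2t}\bigl(1-O(n^{\beta-1})\bigr)+O(1)\right),
\]
where the \(+n/(2t)\) term is \(O(1)\). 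Write \(\gamma_n=\tfrac12(1-O(n^{\beta-1}))\). Extend the integral of the majorant \(e^{-t}t^n\exp(-\gamma_nQ/t)\) to all \(t>0\); this is legitimate because it is positive. Then apply Lemma \ref{lem:penalized-gamma}. This needs \(Q(\lambda)\leq\lambda_1 n\leq n^{1+\beta}\), which holds. The central contribution is therefore at most
\[
  n!\exp\!\left(-\gamma_n\frac{Q}{n}+O\!\left(n^{\beta-1}\frac{Q}{n}+\log n\right)\right).
\]

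\emph{Absorbing the \(\log n\) loss.} The \(O(\log n)\) term is \(o(\sqrt n)\) and goes into \(\exp(o(\sqrt n))\). The relative error \(n^{\beta-1}\) goes into \(\eta_n\). The stated form \(\eta_n=1-O(n^{\beta-1}+n^{-1/2}\log n)\) also allows an additive \(\log n\) to be rewritten as a multiplicative loss when \(Q/n\gtrsim\sqrt n\); when \(Q/n\) is smaller it is simply swallowed by \(\exp(o(\sqrt n))\). Either bookkeeping gives the claim.

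\emph{Tails.} Use the absolute bound \(|P_m(t)|\leq(t+m)^m\leq(t+M_n)^m\), so that \(\prod_i|P_{\lambda_i}(t)|\leq(t+n^\beta)^n\).

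For the lower tail \(t\leq n/2\), the exponent \(-t+n\log(t+n^\beta)\) falls below \(-n+n\log n\) by \(\Omega(n)\).

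For the upper tail \(t\geq2n\), we have \((t+n^\beta)^n\leq t^n e^{n^{1+\beta}/t}\leq t^ne^{O(n^\beta)}\). The Gamma upper tail \(\int_{2n}^\infty e^{-t}t^n\,dt\leq n!e^{-cn}\) then beats \(e^{O(n^\beta)}\) because \(\beta<1\).

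We need both tails to be at most \(n!\exp(-\eta_nQ/(2n))\). Since \(Q/(2n)\leq n^\beta/2=o(n)\), an \(e^{-cn}\) saving suffices.

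\emph{Distinct parts.} The argument never used multiplicities, so the same estimate holds verbatim for distinct-part partitions.

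\emph{Main obstacle.} The step needing the most care is the uniformity of Lemma \ref{lem:uniform-Pm}'s positivity and upper bound for \(m\) up to \(n^\beta\) with \(\beta\) close to \(1\). The cubic error \(m^3/t^2\) is only \(O(n^{\beta-1})\) relative to the main term \(m^2/t\), which is exactly why \(\eta_n\) carries that error. I would double-check that the implicit constant in that lemma is uniform for \(m\leq n^\beta\) and \(t\in[n/2,2n]\), since the lemma states this for fixed \(\beta<1\).
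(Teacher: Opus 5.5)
Your proposal is correct and follows essentially the same route as the paper. Both arguments sum the upper bound (7) from Lemma \ref{lem:uniform-Pm} over the parts using $\sum_i\lambda_i^3\le M_nQ(\lambda)$, control the two tails with the crude bound $(t+M_n)^n$, and apply Lemma \ref{lem:penalized-gamma} with $\gamma_n=\tfrac12(1-O(n^{\beta-1}))$ and $Q(\lambda)\le n^{1+\beta}$. Your explicit check that the tails are $n!e^{-cn}$, and hence dominated by $n!\exp(-\eta_nQ/(2n))$ because $Q/(2n)\le n^\beta/2=o(n)$, makes precise a point the paper states only loosely as ``negligible on the $\sqrt n$ scale.''
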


\begin{proof}
Lemma \ref{lem:uniform-Pm} gives, uniformly when \(m\leq M_n=o(n)\) and
\(t\in[n/2,2n]\),
\[
  \log\frac{P_m(t)}{t^m}
  \leq
  -\frac{m(m-1)}{2t}
  +O\!\left(\frac{m^3}{t^2}+\frac{m}{t^2}\right).       \tag{7}
\]
It also gives \(P_m(t)>0\) on this central range for all sufficiently large
\(n\), so the logarithms are applied to positive quantities.  Summing (7)
over the parts of \(\lambda\) gives
\[
  \prod_iP_{\lambda_i}(t)
  \leq
  t^n
  \exp\!\left(
    -\frac{Q(\lambda)-n}{2t}
    +O\!\left(\frac{M_nQ(\lambda)}{t^2}+1\right)
  \right).
\]
Since \(Q(\lambda)\leq M_nn\), the error in the exponent is
\[
  O\!\left(\frac{M_nQ(\lambda)}{n^2}+1\right)
  =
  O(n^{\beta-1})\frac{Q(\lambda)}{n}+O(1)
\]
for \(t\in[n/2,2n]\).  The tails are controlled in absolute value as follows.
For \(t\leq n/2\), Lemma \ref{lem:uniform-Pm} gives
\[
  \prod_i |P_{\lambda_i}(t)|\leq (t+M_n)^n,
\]
and since \(M_n=o(n)\), the exponent \(-t+n\log(t+M_n)\) is smaller than the
central Gamma exponent \(-n+n\log n\) by \(\Omega(n)\).  For \(t\geq2n\),
the same crude bound gives
\[
  (t+M_n)^n\leq t^n\exp(O(nM_n/t))\leq t^n\exp(O(M_n)),
\]
and the upper Gamma tail is \(\exp(-\Omega(n))\), so this remains negligible
on the \(\sqrt n\) scale.  Hence the parts of the integral outside
\([n/2,2n]\) are negligible on the \(\sqrt n\) scale.

On the central range the preceding product estimate can be written as
\[
  \prod_iP_{\lambda_i}(t)
  \leq
  t^n
  \exp(O(1))
  \exp\!\left(
    -\frac{\gamma_n Q(\lambda)}{t}
  \right),
\]
where \(\gamma_n=\frac12(1-O(n^{\beta-1}))\).  Applying Lemma
\ref{lem:penalized-gamma} with \(Q=Q(\lambda)\) gives
\[
  \frac{\AO(K_\lambda)}{n!}
  \leq
  \exp(o(\sqrt n))
  \exp\!\left(
    -\left(1-O(n^{\beta-1})\right)\frac{Q(\lambda)}{2n}
  \right).
\]
This is the displayed estimate, after harmlessly enlarging the error in
\(\eta_n\).  No uniformity as \(\beta\uparrow1\) is asserted or needed.
\end{proof}

\begin{proof}[Proof of Theorem \ref{thm:partition-sums}]
The lower bounds are exactly the last assertion of
Theorem \ref{thm:truncated-partition}, obtained by first fixing \(R\) and
then letting \(R\to\infty\).

For the upper bound, fix once and for all a number \(3/4<\beta<1\), say
\(\beta=7/8\), and put \(M_n=n^\beta\).  Split the
unrestricted sum \(B(n)\) according to whether \(\lambda_1\leq M_n\).  On the
first range, Lemma \ref{lem:growing-window-paper1} gives
\[
  \frac{1}{n!}\sum_{\substack{\lambda\vdash n\\ \lambda_1\leq M_n}}
  \AO(K_\lambda)
  \leq
  \exp(o(\sqrt n))
  [u^n]\prod_{k\geq1}
  \frac{1}{1-\exp(-\eta_n k^2/(2n))u^k},
\]
where \(\eta_n=1-O(n^{\beta-1}+n^{-1/2}\log n)\to1\).
The proof of Theorem \ref{thm:quadratic-model}, uniformly for the quadratic
coefficient in a compact neighborhood of \(1\), shows that the logarithm of
the coefficient is at most \((C+o(1))\sqrt n\).  Explicitly, replace
\(x^2/2\) in the quadratic model by \(\eta x^2/2\), with
\(\eta\in[1-\delta,1+\delta]\), and set
\[
  I(a,\eta)=
  \int_0^\infty\frac{x}{\exp(a x+\eta x^2/2)-1}\,dx .
\]
For \(\eta\) near \(1\), the saddle \(a=c(\eta)\) is defined by
\(I(c(\eta),\eta)=1\).  Lemma \ref{lem:partition-constants} gives
\(\partial I/\partial a<0\) at \((c,1)\).  The integral is continuous in
\(\eta\) by dominated convergence: near the origin the integrand is bounded
uniformly on a small compact neighborhood of \((c,1)\), and at infinity it is
bounded by an integrable exponential tail.  Hence the implicit-function
theorem gives continuity of \(c(\eta)\) at \(\eta=1\).  The same domination
applied to
\[
  C(\eta)=c(\eta)+\int_0^\infty
  -\log(1-\exp(-c(\eta)x-\eta x^2/2))\,dx .
\]
shows that \(C(\eta)\) is continuous at \(\eta=1\).  Thus a \(1+o(1)\)
perturbation of the quadratic coefficient changes the leading logarithmic
constant by \(o(1)\).  No rate of convergence for \(\eta_n\to1\) is needed:
for every fixed \(\varepsilon>0\), continuity gives
\(C(\eta_n)\leq C+\varepsilon\) for all sufficiently large \(n\), and then
\(\varepsilon\downarrow0\).

On the complementary range \(\lambda_1>M_n\), fix an arbitrary constant
\(A>0\).  Since \(M_n=n^\beta\) and \(\beta>3/4\), for all sufficiently large
\(n\) the inclusion
\[
  \{\lambda_1>M_n\}\subseteq \{\lambda_1>A n^{3/4}\}
\]
holds.  Theorem \ref{thm:far-tail} therefore gives
\[
  \limsup_{n\to\infty}\frac1{\sqrt n}
  \log\frac{1}{n!}
  \sum_{\substack{\lambda\vdash n\\\lambda_1>M_n}}\AO(K_\lambda)
  \leq
  \pi\sqrt{\frac23}-\frac{A^2}{2}.
\]
Letting \(A\to\infty\) shows that this range is exponentially smaller on
the \(\sqrt n\) scale.  Thus
\[
  \limsup_{n\to\infty}
  \frac1{\sqrt n}\log\frac{B(n)}{n!}\leq C.
\]
Together with the lower bound this proves the unrestricted formula.

For the distinct-part sum, the same argument uses the Fermi product
\[
  \prod_{k\geq1}(1+\exp(-\eta_n k^2/(2n))u^k)
\]
and the distinct-part estimate in Theorem \ref{thm:far-tail}.  This gives
the upper bound \(C_d\), while Theorem \ref{thm:truncated-partition} gives
the matching lower bound.
\end{proof}

\section{Numerical checks}

The following checks use exact arithmetic for the acyclic-orientation counts
and then compare them with the asymptotic formulae above.
The constants \(c,C,c_d,C_d\) were computed by high-precision numerical
quadrature of the defining integrals and Newton iteration for the saddle
equations.  With 80 decimal digits of working precision this gives residuals
below \(10^{-70}\) for both equations \(I(c)=1\) and \(I_d(c_d)=1\).  The
tables below provide numerical consistency checks for the logarithmic
asymptotics; the partition-sum convergence is slow.  The computations are
reproducible from the source-bundle scripts
\begin{center}
\begin{tabular}{l}
\path{computations/verify_a372326_columns.py}\\
\path{computations/verify_a372084_diagonal.py}\\
\path{computations/verify_fixed_proportions.py}\\
\path{computations/verify_rectangular_window.py}\\
\path{computations/equal_size_expansion_coefficients.py}\\
\path{computations/verify_equal_size_critical_windows.py}\\
\path{computations/explore_blowup_base_graphs.py}\\
\path{computations/verify_quadratic_partition_model.py}\\
\path{computations/explore_a372395_partition_sums.py}
\end{tabular}
\end{center}
The listed scripts are included in the prepared manuscript source bundle and
can be supplied with the source files or on request.
The coefficient script symbolically reproduces the rational constants
\(\gamma_j\) in Theorem \ref{thm:all-poly-equal} from the near-diagonal
Stirling numbers \(S(m,m-k)\).
The blow-up exploration script reports numerical Hessian spectra and coarse
torus searches for small fixed base graphs; it is exploratory only, not an
input to any proof.
The last script expands the polynomials \(P_m(t)\) exactly and evaluates the
Gamma integrals term by term, using \(\int_0^\infty e^{-t}t^jdt=j!\).

\[
\begin{array}{c|c|c|c}
k&r& C_k(r)/(e^{-(k-1)/2}(kr)!)&
\displaystyle
\frac{C_k(r)}
{e^{-(k-1)/2}(kr)!\left(1-\frac{(k-1)(5k-7)}{24kr}\right)}\\
\hline
2&100&0.999372058537&0.999997056698\\
3&100&0.997769105145&0.999991308051\\
4&100&0.995924621674&0.999987069142\\
5&100&0.993985977785&0.999985893144\\
2&200&0.999686766107&0.999999265878\\
3&200&0.998886724564&0.999997833268\\
4&200&0.997965534982&0.999996778439\\
5&200&0.996996497211&0.999996486671
\end{array}
\]

For the quadratic-energy model in Theorem \ref{thm:quadratic-model}, exact
dynamic programming for the products gives:
\[
\begin{array}{c|c|c|c|c}
n&\log Z_n/\sqrt n& C&\log Z_n^{\rm dist}/\sqrt n&C_d\\
\hline
100&1.481354907392&2.158752005658&0.424552109924&0.905729821720\\
200&1.631617363138&2.158752005658&0.529028810733&0.905729821720\\
500&1.784872744083&2.158752005658&0.636916929791&0.905729821720\\
1000&1.872633831819&2.158752005658&0.699269831515&0.905729821720\\
2000&1.941023424079&2.158752005658&0.748145364362&0.905729821720\\
5000&2.008138720630&2.158752005658&0.796362248874&0.905729821720\\
10000&2.045338197965&2.158752005658&0.823202459873&0.905729821720
\end{array}
\]
The convergence is visibly slow: the theorem proves only an \(o(\sqrt n)\)
error in the logarithm, and the Riemann-sum corrections near the logarithmic
singularity at the origin remain substantial at these sizes.

The truncated constants in Theorem \ref{thm:truncated-partition} converge
quickly to the full constants of Theorem \ref{thm:partition-sums}:
\[
\begin{array}{c|c|c|c|c}
R&c_R&C_R&c_{R,d}&C_{R,d}\\
\hline
2&0.748267431442&2.148930549281&-0.749557358925&0.732080500591\\
3&0.764331079876&2.158471927044&-0.353316990400&0.895495758756\\
4&0.764986685523&2.158748834028&-0.324835266833&0.905416631365\\
5&0.764996390340&2.158751991944&-0.323714467806&0.905725963815
\end{array}
\]

Direct exact computations for the actual OEIS partition sums are more
expensive, but they show the same upward drift toward the constants:
\[
\begin{array}{c|c|c|c|c}
n&\log(A372395(n)/n!)/\sqrt n&C&
\log(A370613(n)/n!)/\sqrt n&C_d\\
\hline
40&1.282653368698&2.158752005658&0.233195614083&0.905729821720\\
60&1.392234736244&2.158752005658&0.316501953286&0.905729821720\\
80&1.463369443608&2.158752005658&0.370820908037&0.905729821720\\
100&1.514838417821&2.158752005658&0.410298875884&0.905729821720\\
120&1.554547425786&2.158752005658&0.440805617478&0.905729821720
\end{array}
\]
This slow convergence is consistent with the \(o(\sqrt n)\) error term in
Theorem \ref{thm:partition-sums} and with the size of the local-limit
corrections near the logarithmic singularity at the origin.

For the rectangular window, the table gives
\[
  \log(D_{m,n}/(mn)!)+m/2
\]
and the limiting target \(1/2-5m/(24n)\), with \(m=\kappa n\).
\[
\begin{array}{c|c|c|c}
\kappa&n&\log(D_{\kappa n,n}/(\kappa n^2)!)+\kappa n/2&
1/2-5\kappa/24\\
\hline
1&20&0.310395599957&0.291666666667\\
1&30&0.304159077535&0.291666666667\\
2&20&0.095975023893&0.083333333333\\
2&30&0.091728947875&0.083333333333\\
3&20&-0.118704935890&-0.125000000000\\
3&30&-0.120813974001&-0.125000000000
\end{array}
\]
Additional sublinear and intermediate checks report the absolute logarithmic
error
\[
  \log D_{m,n}-\log\left((mn)!
  \exp(-m/2+1/2-5m/(24n))\right).
\]
\[
\begin{array}{c|c|c}
m&n&\hbox{logarithmic error}\\
\hline
4&80&0.005305364556\\
8&80&0.005688794806\\
12&80&0.005764203787\\
10&100&0.004616635350\\
20&100&0.004638397209\\
40&100&0.004460485030
\end{array}
\]

The next table checks the enlarged equal-size expansion at the new critical
scales.  It reports
\[
  R_{m,n}=
  \log(D_{m,n}/(mn)!)+\frac m2-\frac12+\frac{5m}{24n}
  +\frac{m}{8n^2}+\frac{251m}{2880n^3}.
\]
The theorem predicts \(R_{m,n}=O(n^{-1})\), and the final column displays
the stabilized product \(nR_{m,n}\).
\[
\begin{array}{c|c|c|c|c}
\hbox{scale}&n&m&R_{m,n}&nR_{m,n}\\
\hline
n^2&4&16&0.144129537270&0.576518149080\\
n^2&5&25&0.112323371932&0.561616859661\\
n^2&6&36&0.091913153040&0.551478918240\\
n^2&7&49&0.077739289446&0.544175026124\\
n^3&3&27&0.192787311739&0.578361935217\\
n^3&4&64&0.133763861927&0.535055447708\\
n^3&5&125&0.102289218067&0.511446090334
\end{array}
\]

For Theorem \ref{thm:tutte-axis}, exact chromatic-polynomial evaluation gives
the following ratios \(H_s(N,p)\) divided by the asymptotic formula.
\[
\begin{array}{c|c|c|c}
p&s&N&\hbox{ratio}\\
\hline
2&2&40&0.9585961408\\
2&2&120&0.9851492927\\
3&2&40&0.9701833293\\
3&2&120&0.9948827955\\
4&3&40&1.0142368220\\
4&3&120&1.0042705860\\
5&2&40&0.9958415693\\
5&2&120&0.9984461997
\end{array}
\]

For Theorem \ref{thm:fixed-proportions}, exact bipartite values give:
\[
\begin{array}{c|c|c}
a&b&\AO(K_{a,b})/\hbox{asymptotic}\\
\hline
30&70&0.963630172154\\
40&80&0.976688955430\\
60&60&0.989496165924\\
80&120&0.991145418809\\
75&175&0.985096250805
\end{array}
\]

For the first correction in Theorem \ref{thm:tutte-product-regimes}, the
table compares the leading and corrected fixed part-size formulae:
\[
\begin{array}{c|c|c|c|c}
k&s&r&\hbox{leading ratio}&\hbox{corrected ratio}\\
\hline
2&2&40&1.0171788543&0.9999915004\\
2&2&120&1.0057282135&0.9999990523\\
3&2&40&1.0110565510&0.9999460394\\
3&2&120&1.0036976822&0.9999940007\\
4&3&40&1.0274180099&1.0000722834\\
4&3&120&1.0091229629&1.0000083039
\end{array}
\]

\section{Open problems}

The partition-sum problem for A372395 and A370613 is settled here at the
logarithmic scale.  Several natural refinements remain.

\begin{enumerate}
\item Determine the full prefactor, and preferably a complete asymptotic
expansion, for A372395 and A370613.  The proof above identifies the
logarithmic constant but deliberately avoids the local two-variable saddle
analysis needed for a polynomial prefactor.
\item Extend the Tutte-axis results away from \(T_G(1+s,0)\).  In particular,
it would be valuable to understand whether analogous integral or
coefficient representations can treat \(T_G(x,y)\) with \(y\neq0\) for
complete multipartite graphs.  The present method factors through the
chromatic specialization \(T_G(1-q,0)\); for \(y\neq0\), the Tutte polynomial
remembers connected spanning subgraphs and no longer decomposes only by
proper color classes.  A plausible route would need an additional component
counter in the integral or a random-cluster representation before the
saddle-point analysis can begin.
\item Develop a \(q\)-analogue of the orientation counts, for example through
sink enumerators or chromatic symmetric functions, and determine whether the
same saddle-point regimes persist.  The chromatic symmetric function
framework of Stanley \cite{stanley1995} is the natural starting point.
\item Develop a complete two-parameter phase diagram for
\(\AO(K_{m,\ldots,m})\) when both \(m,n\to\infty\), beyond every fixed
polynomial window \(m\leq Cn^J\), and obtain higher-order corrections in the
fixed-proportion theorem on the Tutte axis.
\item Extend the fixed graph blow-up framework beyond conditional
smooth-point ACSV, for example by proving strict minimality for broad
families of base graphs.  Lemma
\ref{lem:complete-base-critical-param} shows that even the complete-base
fixed-proportion problem involves branch choices for the unimodal function
\(-z\log z\), while the \(C_5\) specialization in Section 3 gives a natural
first non-complete test case.  Another direction is to treat graph classes
such as threshold graphs and cographs whose chromatic polynomials have
recursive decompositions rather than a single fixed-base blow-up form.
\end{enumerate}

\section{Related work and priority notes}

The identity \(\AO(G)=(-1)^{|V(G)|}\chi_G(-1)\) is due to Stanley
\cite{stanley1973}.  Exact enumerative formulae for acyclic orientations of
complete multipartite graphs are discussed in recent work of Carballosa,
Reyes, and Khera \cite{carballosa2025}.  Their work gives exact encodings
and closed formulae rather than the asymptotic regimes studied here.

The complete bipartite case has an additional connection with
poly-Bernoulli numbers and lonesum matrices.  Cameron, Glass, Rekv{\'e}nyi,
and Schumacher showed that \(\AO(K_{a,b})\) is the negative-index
poly-Bernoulli number \(B_a^{(-b)}\), equivalently the number of \(a\times b\)
lonesum matrices \cite{cameronglassrekvenyi2014}.  Khera, Lundberg, and
Melczer subsequently used ACSV to obtain bivariate asymptotics for these
poly-Bernoulli numbers and lonesum matrices \cite{kheralundbergmelczer2021}.
Thus the \(p=2\) acyclic-orientation specialization of the fixed-column and
fixed-proportion ACSV analysis here is equivalent to, or follows from, this
earlier bipartite/lonesum-matrix theory.  The new fixed-column contribution of the
present paper is the complete multipartite extension to arbitrary fixed
\(p\geq3\), together with the Tutte-axis variants, product regimes,
partition-sum asymptotics, and fixed graph blow-up framework developed here.
More recently, Khera and Lundberg studied the distribution of the length of
the longest path in random acyclic orientations of \(K_{a,b}\), using related
generating functions and analytic-combinatorial methods
\cite{kheralundberg2025}.

A recent unpublished conference abstract of Lundberg, Rodriguez, and Corcoran
\cite{corcoran2026} announces ACSV asymptotics for the equal-size complete
tripartite case.  That case is contained in the specialization \(p=3\) of
Theorem \ref{thm:a267383}; the present manuscript records the overlap
explicitly and treats arbitrary fixed \(p\), conditional fixed-proportion
saddles, product windows, Tutte-axis evaluations, and partition sums.  OEIS
A267383 records Kotesovec's fixed-column asymptotic as a conjecture.  Apart
from the bipartite case discussed above and the announced tripartite
equal-size case, the cited exact enumeration literature does not appear to
contain the arbitrary fixed-\(p\geq3\) fixed-column formula, the Tutte-axis
variants, the product-window asymptotics, or the partition-sum asymptotics
proved here.

Classical and modern tools for weighted partition asymptotics include
Wright's circle method \cite{wright1934} and Meinardus-type theorems,
including the probabilistic extensions of Granovsky, Stark, and Erlihson
\cite{granovsky2008} and later developments \cite{granovsky2015}.  Our
partition-sum proof uses this standard Khintchine and local-limit framework, but
the weights arise from the graph-polynomial integral and therefore require
the separate growing-window comparison and far-tail estimate before the
quadratic-energy model can be transferred back to A372395 and A370613.

\appendix

\section{OEIS-ready consequences}

For OEIS use, the main consequences can be summarized as follows; the proofs
are the corresponding theorems in the body of the paper.
\begin{description}
\item[A267383.]
For fixed \(p\ge2\), with \(L=\log(p/(p-1))\),
\[
  \AO(T(N,p))\sim
  \frac{N!}{(p-1)(1-L)^{(p-1)/2}p^NL^{N+1}}.
\]
\item[A372326.]
For fixed \(k\),
\[
  \AO(K_{\underbrace{k,\ldots,k}_{r}})
  =
  e^{-(k-1)/2}(kr)!
  \left(1-\frac{(k-1)(5k-7)}{24kr}+O(r^{-2})\right).
\]
For equal-size parts, uniformly for \(1\le m\le Cn^3\),
\[
  \AO(K_{\underbrace{m,\ldots,m}_{n}})
  =(mn)!\exp\!\left(
  -\frac m2+\frac12-\frac{5m}{24n}
  -\frac{m}{8n^2}-\frac{251m}{2880n^3}+O_C(n^{-1})\right).
\]
\item[A372084.]
The main diagonal \(A372326(n,n)=A267383(n^2,n)\) satisfies
\[
  \AO(K_{\underbrace{n,\ldots,n}_{n}})
  =
  (n^2)!\exp\!\left(-\frac n2+\frac7{24}+O(n^{-1})\right).
\]
\end{description}

\section{Notation index}

\[
\begin{array}{c|l}
\hbox{symbol} & \hbox{meaning}\\
\hline
\AO(G) & \hbox{number of acyclic orientations of }G\\
\chi_G(q) & \hbox{chromatic polynomial of }G\\
T_G(x,y) & \hbox{Tutte polynomial of }G\\
K_{\lambda_1,\ldots,\lambda_r} & \hbox{complete multipartite graph with
part sizes }\lambda_i\\
H[\lambda] & \hbox{independent-set blow-up of a fixed base graph }H\\
B_H(x) & \sum_{I\in\mathrm{Ind}(H)}\prod_{i\in I}(e^{x_i}-1)\\
F_H(y) & B_H(-y)\\
N & \lambda_1+\cdots+\lambda_r,\ \hbox{the total number of vertices}\\
S(m,j) & \hbox{Stirling number of the second kind}\\
P_m(t) & \sum_{j=1}^m(-1)^{m+j}S(m,j)t^j\\
(q)_J & q(q-1)\cdots(q-J+1),\ \hbox{falling factorial}\\
H_s(G) & (-1)^{|V(G)|}\chi_G(-s)=sT_G(1+s,0)\ \hbox{for connected }G\\
L & \log(p/(p-1))\\
Q(\lambda) & \sum_i\lambda_i^2\\
B(n),B_d(n) & \hbox{A372395 and A370613 partition sums}\\
Z_n,Z_n^{\rm dist} & \hbox{quadratic-energy unrestricted and distinct models}\\
c,C,c_d,C_d & \hbox{partition-saddle constants in Theorem
\ref{thm:partition-sums}}\\
M_n & \hbox{growing largest-part cutoff in Lemma
\ref{lem:growing-window-paper1}}
\end{array}
\]

\section*{Funding}

This research did not receive any specific grant from funding agencies in the
public, commercial, or not-for-profit sectors.

\section*{Declaration of generative AI and AI-assisted technologies in the
manuscript preparation process}

During the preparation of this work the author used ChatGPT as an assistive
tool for language editing, organization of exploratory algebraic notes, and
code-drafting support.  After using this tool, the author reviewed and edited
the content as needed and takes full responsibility for the content of the
published article.  All mathematical arguments, references, and numerical
checks reported in Section 10 were independently verified by the author; the
numerical checks are based on exact arithmetic or dynamic-programming
evaluations of the formulae stated in the paper.  No generative AI or
AI-assisted tools were used to create or alter figures, images, or artwork.

\end{document}